\newtheorem{theorem}{Theorem}[section]
\newtheorem{proposition}[theorem]{Proposition}
\newtheorem{lemma}[theorem]{Lemma}
\newcommand{\N}{\mathbb N}
\newcommand{\R}{\mathbb R}
\newcommand{\T}{\mathbb T}
\newcommand{\eps}{\varepsilon}
\newcommand{\dd}{\, \mathrm{d}}
\newcommand{\tr}{\mbox{tr}}
\newcommand{\vv}{\langle v \rangle}
\numberwithin{equation}{section}
\title{Global existence for an isotropic modification of the Boltzmann equation}
\author{Stanley Snelson}
\address{Department of Mathematical Sciences, Florida Institute of Technology, Melbourne, FL 32901}
\email{ssnelson@fit.edu}
\thanks{The author was partially supported by NSF grant DMS-2213407.}
\begin{document}

\maketitle

\begin{abstract}
Motivated by the open problem of large-data global existence for the non-cutoff Boltzmann equation, we introduce a model equation that in some sense disregards the anisotropy of the Boltzmann collision kernel. We refer to this model equation as {\it isotropic Boltzmann} by analogy with the isotropic Landau equation introduced by Krieger and Strain [Comm. Partial Differential Equations 37(4), 2012, 647--689]. The collision operator of our isotropic Boltzmann model converges to the isotropic Landau collision operator under a scaling limit that is analogous to the grazing collisions limit connecting (true) Boltzmann with (true) Landau. 

Our main result is global existence for the isotropic Boltzmann equation in the space homogeneous case, for certain parts of the ``very soft potentials'' regime in which global existence is unknown for the space homogeneous Boltzmann equation. The proof strategy is inspired by the work of Gualdani-Guillen [J. Funct. Anal. 283(6), 2022, Paper No. 109559] on isotropic Landau, and makes use of recent progress on weighted fractional Hardy inequalities.\\
\noindent \textbf{MSC(2010)}: 35Q20, 35B60, 46N20.\\
\noindent \textbf{Keywords}: Boltzmann equation, global existence, fractional Hardy inequality
\end{abstract}

\section{Introduction}

\subsection{Motivation: the non-cutoff Boltzmann equation}
The Boltzmann equation is a fundamental kinetic model in statistical physics. The unknown funtion $f(t,x,v)\geq 0$ represents the particle density of a diffuse gas as it evolves in phase space. The equation reads
\begin{equation}\label{e:boltzmann}
\partial_t f + v\cdot \nabla_x f = Q_B(f,f),
\end{equation}
where $Q_B(f,f)$ is Boltzmann's collision operator, a nonlinear, nonlocal operator that acts in the velocity variable. For any two functions $f,g$, the operator is defined by
\[
Q_B(f,g) = \int_{\R^d} \int_{\mathbb S^{d-1}} B(v,v_*,\sigma) [f(v_*') g(v') - f(v_*) g(v)] \dd \sigma \dd v_*.
\]
Because collisions are assumed to be elastic, momentum and energy are conserved, which implies the four pre- and post-collisional velocities $v$, $v_*$, $v'$, $v_*'$ lie on a sphere of radius $|v-v_*|/2$. (See Figure \ref{f:boltz}.) Parameterizing this sphere by $\sigma \in \mathbb S^{d-1}$, one has the formulas
\begin{equation}\label{e:pre-post}
\begin{split}
v' &= \frac{v+v_*}2 + \frac{|v-v_*|} 2 \sigma,\\
v_*' &= \frac{v+v_*} 2 - \frac{|v-v_*|} 2 \sigma.
\end{split}
\end{equation}
The {\it non-cutoff} collision kernel $B(v,v_*,\sigma)$ is not integrable over $\mathbb S^{d-1}$. For constants $\gamma> -d$ and $s\in (0,1)$, it takes the form
\begin{equation}\label{e:kernel}
B(v,v_*,\sigma) = |v-v_*|^\gamma b(\cos\theta), \quad b(\cos\theta) \approx \theta^{-(d-1)-2s} \text{ as } \theta \to 0,
\end{equation}
where $\theta$ is the deviation angle between pre- and post-collisional velocities:
\[
\cos\theta = \frac{v-v_*}{|v-v_*|} \cdot \sigma.
\]
This paper is (mostly) focused on the space homogeneous regime, where the system is constant in $x$. In this case, the Boltzmann equation reduces to 
\[
\partial_t f = Q_B(f,f). 
\]
Global existence of classical smooth solutions is known for this homogeneous equation only when $\gamma + 2s \geq 0$: see e.g. \cite{desvillettes2004homogeneous, desvillettes2009stability, he2012homogeneous-boltzmann}. Regarding other notions of solution, measure-valued solutions are known to exist globally when $\gamma \geq -2$ \cite{lu2012measure, morimoto2016measure}, and H-solutions exist for any $\gamma>-d$ and $s\in (0,1)$ \cite{villani1998weak}. Thanks to the recent result of \cite{chaker2022entropy}, H-solutions are in fact weak solutions in the usual sense (integration against smooth test functions) when $\gamma + 2s > -2$. For more on the existence and regularity theory of the spatially homogeneous Boltzmann equation, see \cite{DM2005Lp, fournier2008uniqueness, morimoto2009homogeneous, chen2011smoothing, glangetas2016sharp, barbaroux2017gevrey, alonso2019homogeneous} and the references therein.

We are interested in the problem of constructing global smooth solutions, so we restrict our attention to the range
\[
\gamma+2s< 0,
\]
sometimes referred to as ``very soft potentials.''\footnote{If the interaction potential between particles is taken to be an inverse power law $\phi(r) = r^{1-p}$, then the assumption $\gamma+2s<0$ corresponds to $p<3$.} The regularity of the generalized solutions discussed above is not understood in this parameter regime.

The main extra difficulty in the case of very soft potentials comes from the singularity $|v-v_*|^\gamma$ in \eqref{e:kernel}, which becomes more severe when $\gamma$ is more negative. To control this singularity, one would need higher integrability estimates for the solution $f$, but it is not clear how to obtain this higher integrability unconditionally, when $\gamma+2s<0$.  For the model equation we introduce below, the desired higher integrability estimates are available. 

%
%
%
%
%

For the full inhomogeneous equation \eqref{e:boltzmann}, global existence remains unknown in the case of general (far from equilibrium) initial data, regardless of $\gamma$ and $s$. 
A more attainable, but still difficult, goal is to prove regularity conditional to uniform control on the mass, energy, and entropy densities of $f$, i.e. show that the solution is smooth and can be continued past any given time, as long as these densities satisfy uniform bounds. This has been accomplished for the case $\gamma+2s \in [0,2]$ in the breakthrough result \cite{imbert2020smooth}, but is still open in the case $\gamma+2s<0$. This open problem is discussed in the review article \cite{imbert2020review}, Section 12.2. The extra obstacle is, once again, the more severe singularity in the collision operator as $v_*\sim v$. We believe that the model equation introduced in the present article may provide a useful stepping stone for this problem as well.


\begin{figure}
\begin{tikzpicture}[>=triangle 45]
\draw[thick] (-2.5,0) node[anchor=east] {\Large $v'$}-- (2.5,0) node[anchor=west] {\Large $v_*'$} ;
\draw[thick] (-1.915,-1.607) node[anchor=north east] {\Large $v$} -- (1.915,1.607) node[anchor=south west] {\Large $v_*$};
\draw[thick] (0,0) circle (2.5cm);
\draw[blue, thick] (-1.2,0)    arc  (180:220:1.2) ; 
\draw[red, thick,->] (0,0)  --   (-0.9,0);
\node at (-0.4, 0.25) {\color{red}\Large $\sigma$};
\node at (-1.4,-0.6) {\color{blue}\Large $\theta$};
\end{tikzpicture}
\caption{The collision geometry corresponding to the Boltzmann collision operator.}\label{f:boltz}
\end{figure}
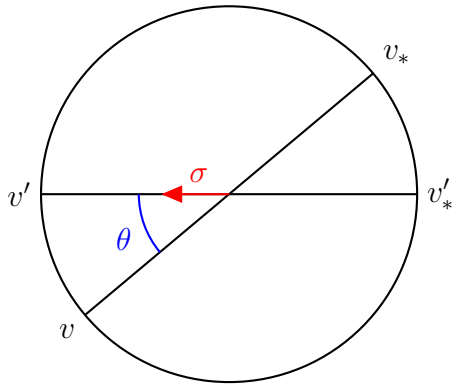

\subsection{An isotropic Boltzmann model}

We now introduce a model equation that---although not necessarily relevant from the point of view of physics---is more tractable than the Boltzmann equation while still encapsulating many of the key mathematical difficulties. 


To derive our equation, we begin by relaxing the requirement that the four pre- and post-collisional velocities lie on a sphere, i.e. we replace $\sigma$ in \eqref{e:pre-post} with any $z\in \R^d$, yielding
\begin{equation}\label{e:vprime-iso}
\begin{split}
v' &= \frac{v+v_*}2 +\frac{|v-v_*|} 2 z,\\
v_*' &= \frac{v+v_*} 2 -\frac{|v-v_*|} 2 z.
\end{split}
\end{equation}
(See Figure \ref{f:iso}.) This amounts to discarding energy conservation while still requiring momentum conservation (since $v+v_* = v'+v_*'$ is still true in \eqref{e:vprime-iso}). 

Next, we look at the Boltzmann collision kernel \eqref{e:kernel}, which in light of Figure \ref{f:boltz} and $\theta \approx \sin\theta$ can be rewritten as
\begin{equation}\label{e:B-change}
B(v,v_*,\sigma) 
\approx |v-v_*|^\gamma\left( \frac{|v'-v|}{|v-v_*|}\right)^{-(d-1)-2s}.
\end{equation}
Since we will integrate over $z\in\R^d$ rather than $\sigma \in \mathbb S^{d-1}$, dimensional analysis tells us that we should replace the exponent $-(d-1)-2s$ with $-d-2s$. Next, we make the change of variables
\[
z \mapsto w = v'-v = \frac{v_* - v} 2 + \frac {|v-v_*|} 2 z,
\]
which gives
\begin{equation}\label{e:vpw}
\begin{split}
v' &= v+w,\\
v_*' &= v_*-w,
\end{split}
\end{equation}
and noting that $\dd z = 2^{d} |v-v_*|^{-d} \dd w$, we absorb this extra factor of $\sim |v-v_*|^{-d}$ into the collision kernel, giving $B\approx |v-v_*|^{\gamma+2s}|v'-v|^{-d-2s}$. Finally, we need to change the factor $|v-v_*|$ in order to recover micro-reversibility, i.e. the property that a collision and its reverse are weighed the same amount by the collision kernel. It would be difficult to obtain a well-behaved operator without micro-reversibility. Since $|v'-v_*'| \neq |v-v_*|$ in our collision geometry, we replace $|v-v_*|$ in the collision kernel with $|v-v_*'|$, or equivalently, $|v' - v_*|$. This is the length of the dashed lines in Figure \ref{f:iso}.
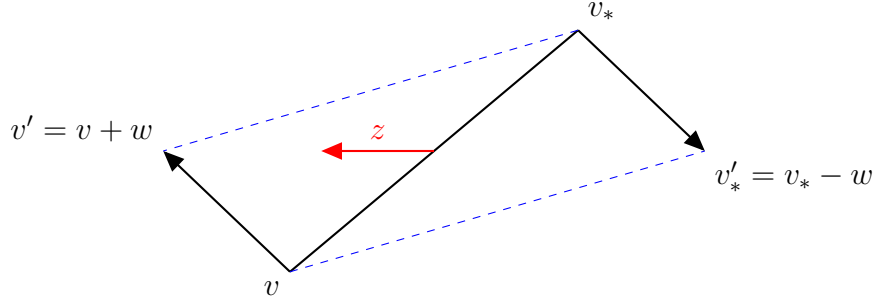
\begin{figure}
\begin{tikzpicture}[>=triangle 45]
\draw[thick] (-1.915,-1.607) node[anchor=north east] {\Large $v$} -- (1.915,1.607) node[anchor=south west] {\Large $v_*$};
\draw[thick,->] (-1.915,-1.607) -- (-3.6,0) node[anchor=south east] {\Large $v' = v+w$};
\draw[thick,->] (1.915,1.607) -- (3.6,0) node[anchor=north west] {\Large $v_*'=v_*-w$};
\draw[red, thick,->] (0,0)  --   (-1.5,0);
\node at (-0.75, 0.25) {\color{red}\Large $z$};
\draw[blue, dashed] (-1.915, -1.607) -- (3.6,0);
\draw[blue, dashed] (1.915, 1.607) -- (-3.6,0);
\end{tikzpicture}
\caption{The collision geometry corresponding to the isotropic collision operator. Momentum is conserved by the collisions, but energy is not.}\label{f:iso}
\end{figure}

For simplicity, we choose our kernel to be a product of power laws:
\[
B(v,v_*,w) = c_{d,\gamma,s} |v-v_*'|^{\gamma+2s+d} |v'-v|^{-d-2s},
\]
and our collision operator finally becomes
\begin{equation}\label{e:Qiso}
Q(f,g)  =c_{d,\gamma,s} \int_{\R^d} \int_{\R^d} |v-v_*+w|^{\gamma+2s} |w|^{-d-2s} [g(v+w)f(v_*-w) - g(v)f(v_*)] \dd w \dd v_* ,
\end{equation}
with $v'$, $v_*'$ defined in \eqref{e:vpw}, and
\begin{equation}\label{e:cdgs}
c_{d,\gamma,s} =  
 \frac{(1-s)\Gamma\left(\frac {d+2s} 2\right)\Gamma\left(-\frac {\gamma+2s} 2\right)}{\pi^d 2^{d+\gamma} \Gamma\left(\frac{d+\gamma+2s}{2}\right)}.
\end{equation}
The value of the constant $c_{d,\gamma,s}$ does not play an important role, as long as one studies the equation for fixed $\gamma$ and $s$. The choice we make for $c_{d,\gamma,s}$ ensures that $Q(f,g)$ has a well-defined limit as $s\to 1$ (see \eqref{e:sto1} below).


The following alternate form of the operator $Q$ is justified in Section \ref{s:carleman}: 
\begin{equation}\label{e:carleman}
\begin{split}
Q(f,g) &= c_1 [f\ast|\cdot|^{\gamma+2s}](-\Delta)^s  g +  c_2 [f\ast |\cdot|^\gamma] g,
\end{split}
\end{equation}
where throughout this paper, we use the notation $[g \ast |\cdot|^\mu]$ or $[g\ast |\cdot|^\mu](v)$ for convolutions over $\R^d_v$ with power functions $|v|^\mu$, 
and the constants $c_1, c_2$ are defined by 
\begin{equation}\label{e:c1c2}
\begin{split}
c_1 &= 
\frac{(1-s)|\Gamma(-s)| \Gamma\left(- \frac{\gamma+2s} 2\right)}{\pi^{d/2} 2^{d+\gamma+2s} \Gamma\left(\frac{d+\gamma+2s} 2\right)},\\
c_2 &= 
\frac{(1-s)|\Gamma(-s)| \Gamma\left(-\frac\gamma 2\right)}{ \pi^{d/2} 2^{d+\gamma} \Gamma\left(\frac{d+\gamma} 2\right)}.
\end{split}
\end{equation}
Both representations \eqref{e:Qiso} and \eqref{e:carleman} of $Q(f,g)$ are useful in our analysis.  



The goal of this paper is to study the equation
\begin{equation}\label{e:main}
\partial_t f + v\cdot\nabla_x f = Q(f,f),
\end{equation}
with $Q(f,f)$ defined as in \eqref{e:Qiso} or equivalently, \eqref{e:carleman}. We call \eqref{e:main} the {\it isotropic Boltzmann equation}.

To justify the use of the term ``isotropic,'' recall that the true Boltzmann collision operator can be written in the Carleman representation (see, e.g. \cite{silvestre2016boltzmann}) as follows:
\begin{align}
Q_B(f,g) &= \int_{\R^d} K_B(v,w) \frac{ g(v+w) - g(v)}{|w|^{d+2s}} \dd w + c [f\ast |\cdot|^\gamma] g,\label{e:boltz-carl}\\
K_B(v,w) &\approx \left(\int_{\{z \perp w\}} f(v+z) |z|^{\gamma+2s+1} \dd z\right) |w|^{-d-2s}.
\end{align}
This integral kernel $K_B(v,w)$ consists of the standard singularity of order $2s$, weighted by a function depending nonlocally on $f$. The kernel is anisotropic because the weight depends on the direction of $w$ relative to $v$, and this leads to nontrivial complications in the analysis of $Q_B$. On the other hand, the isotropic collision operator \eqref{e:carleman} can be written in the same form \eqref{e:boltz-carl}, with $K_B(v,w)$ replaced by
\[
 K_f(v,w) : = c_{d,\gamma,s}\left(\int_{\R^d} f(v+z)|z|^{\gamma+2s} \dd z\right) |w|^{-d-2s}.
\]
Note that this  weight function $c_{d,\gamma,s}[f\ast|\cdot|^{\gamma+2s}](v)$ is independent of $w$.

\subsection{Main result}

Recall the notation $\vv = \sqrt{1+|v|^2}$. Our main result is the existence of global solutions in the homogeneous case, for part of the very soft potentials regime:

\begin{theorem}\label{t:main}
Let $d\geq 3$ be an integer, and let $\gamma\in (-d,0)$ and $s\in (0,1)$ satisfy
\[
\max\left\{ -\frac{d+4s} 3 ,  -2s - \frac{4s}d\right\} \leq \gamma < -2.
\]
Then, for any $f_{\rm in}: \R^d \to [0,\infty)$ with $\vv^q f_{\rm in} \in C^\infty(\R^d)$ for all $q \geq 0$, there is a unique smooth solution $f\geq 0$ of the homogeneous isotropic Boltzmann equation
\begin{equation}\label{e:homogeneous}
\partial_t f = Q(f,f),
\end{equation}
on $[0,\infty)\times \R^d$ with $f(0,v) = f_{\rm in}(v)$, where $Q(f,f)$ is defined in \eqref{e:Qiso}. This solution has nonincreasing $L^2(\R^d)$ norm:
\[
\int_{\R^d} f^2(t,v) \dd v \leq \int_{\R^d} f_{\rm in}^2(v) \dd v, \quad t\geq 0.
\]
\end{theorem}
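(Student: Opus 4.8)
The plan is to construct the solution by a compactness/continuation argument based on a priori estimates, following the strategy used by Gualdani--Guillen for isotropic Landau but adapted to the fractional operator $(-\Delta)^s$ appearing in the Carleman form \eqref{e:carleman}. First I would regularize: replace the singular weight $|\cdot|^\gamma$ (and $|\cdot|^{\gamma+2s}$) by truncated kernels, say $(|\cdot|^2+\delta)^{\gamma/2}$ and similarly for the other exponent, obtaining an approximate operator $Q_\delta$ with bounded, smooth coefficients. For fixed $\delta>0$ the equation $\partial_t f_\delta = Q_\delta(f_\delta,f_\delta)$ has a local-in-time smooth solution by standard fixed-point arguments (the coefficients are now convolutions of $f_\delta$ with bounded kernels, hence Lipschitz in $f_\delta$ on weighted $C^k$ spaces), and mass is conserved while the solution stays nonnegative by a comparison principle, since $Q_\delta$ has the form (positive coefficient)$\cdot(-\Delta)^s f$ plus (positive coefficient)$\cdot f$, which is degenerate-elliptic and order-$2s$. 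The second term $c_2[f\ast|\cdot|^\gamma]g$ is the ``good'' sign term that keeps $f\geq 0$.

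The heart of the matter is the $\delta$-uniform a priori bounds that let me pass to the limit $\delta\to 0$ and continue globally. The key is the $L^2$ estimate: testing \eqref{e:homogeneous} against $f$ and using \eqref{e:carleman},
\[
\frac12 \frac{d}{dt}\int_{\R^d} f^2 \dd v = c_1 \int_{\R^d} [f\ast|\cdot|^{\gamma+2s}]\, f\, (-\Delta)^s f \dd v + c_2 \int_{\R^d} [f\ast|\cdot|^\gamma]\, f^2 \dd v.
\]
The second term is manifestly $\leq 0$ once one checks $c_2<0$ for $\gamma<0$ (indeed $\Gamma(-\gamma/2)>0$ but one must verify the overall sign of the constant in \eqref{e:c1c2}), while the first term must be estimated from above by (a small multiple of) a coercive quantity. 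Writing $f(-\Delta)^s f = \tfrac12 (-\Delta)^s(f^2) + \tfrac12 D[f]$ where $D[f]\geq 0$ is the fractional ``carré du champ,'' the dangerous piece is $\tfrac{c_1}{2}\int [f\ast|\cdot|^{\gamma+2s}]\,(-\Delta)^s(f^2)$; integrating the fractional Laplacian by parts onto the weight and using that $(-\Delta)^s|\cdot|^{\gamma+2s}$ is (up to a constant of a definite sign) comparable to $|\cdot|^\gamma$, this reduces to controlling $\int [f\ast|\cdot|^\gamma]\, f^2$, which is exactly where the \textbf{weighted fractional Hardy inequality} enters: one needs
\[
\int_{\R^d} [f\ast|\cdot|^\gamma]\, f^2 \dd v \;\leq\; C\,\|f\|_{L^1}\, \big\|\, |\cdot|^{\gamma/2} f\,\big\|_{L^2}^2 \;\lesssim\; \|f\|_{L^1}\, \|f\|_{\dot H^s}^2
\]
for the relevant range of $\gamma$, and the constant must be beatable by the coercive term $D[f]\sim \|f\|_{\dot H^s}^2$ after using mass conservation to bound $\|f\|_{L^1}$. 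This is precisely the place where the hypothesis $\gamma \geq \max\{-(d+4s)/3,\ -2s-4s/d\}$ is consumed: the first bound should be what the weighted fractional Hardy inequality demands (roughly $\gamma+2s > -(d+2s)/3$ type thresholds governing when $|v|^{\gamma}$ is controlled by two factors of $(-\Delta)^{s/2}$ and one of $\|f\|_{L^1}$), and the second should come from an interpolation/moment balance. I expect this $L^2$--coercivity comparison to be the main obstacle, and the rest of the ``very soft'' difficulty (the growth of moments) to be handled by propagating weighted $L^1$ and $L^2$ bounds $\int \vv^q f^2$ using that the weight $[f\ast|\cdot|^{\gamma+2s}]$ decays at infinity since $\gamma+2s<0$.

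Once the $\delta$-uniform bounds $\sup_{[0,T]}\|f_\delta\|_{L^2}\leq \|f_{\rm in}\|_{L^2}$ and $\int_0^T \|f_\delta\|_{\dot H^s}^2 \dd t \leq C(T)$ (plus weighted analogues) are in hand, I would bootstrap higher regularity: differentiating the equation, each $v$-derivative satisfies a similar equation with a linear fractional-diffusion principal part of order $2s$ and lower-order terms controlled by the already-established norms, so parabolic smoothing for $(-\Delta)^s$ (e.g. via the kernel of the fractional heat semigroup, or De Giorgi--Nash--Moser for the nonlocal operator $K_f$ which has the ellipticity bounds coming from $\|f\|_{L^1}$ and $\|f\|_{L^\infty}$) upgrades $f_\delta$ to $C^\infty$ in $v$ with $\vv^q$ weights, uniformly in $\delta$ on $[0,T]$. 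Passing $\delta\to 0$ by Arzelà--Ascoli gives a global smooth nonnegative solution with the stated $L^2$ bound. Uniqueness follows by a Gronwall estimate on the difference of two solutions in, say, a weighted $L^2$ or $L^1$ norm, again using the Hardy-type control of the collision operator's coefficients; since the coefficients are convolutions with negative powers, the stability estimate will need the same range of $\gamma$, so no new restriction appears. The only subtlety I anticipate in the limit is ensuring the singular integral defining $Q(f,f)$ converges, which is where the smoothness and decay of $f$ (giving $f\in C^{2s+\alpha}$ locally) makes the $|w|^{-d-2s}$ singularity integrable against the second difference $g(v+w)-g(v)$, and the weight $|v-v_*+w|^{\gamma+2s}$ integrable near $v_*=v$ because $\gamma+2s>-d$.
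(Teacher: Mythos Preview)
Your proposal has the right overall architecture (local solution, a priori $L^2$ bound via a fractional Hardy inequality, bootstrap to higher regularity, continuation), but the core $L^2$ step is misdiagnosed and a crucial intermediate estimate is missing.

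First, the signs: $c_2>0$, not $c_2<0$. Every factor in \eqref{e:c1c2} is positive for $-d<\gamma<0$, so the reaction term $c_2[f\ast|\cdot|^\gamma]f^2$ is the \emph{bad} (positive) term, while the diffusion $Q_1$ is the coercive one. Your sentence ``the second term is manifestly $\leq 0$'' inverts the whole difficulty. The actual argument (Theorem~\ref{t:L2}) applies the weighted Hardy inequality \eqref{e:hardy} to bound $c_R\int f^2[f\ast|\cdot|^\gamma]$ by $\frac{c_R}{C_H}$ times the \emph{weighted} Dirichlet form $\int[f\ast|\cdot|^{\gamma+2s}]\frac{|f(v+w)-f(v)|^2}{|w|^{d+2s}}\,dw\,dv$, not by $\|f\|_{L^1}\|f\|_{\dot H^s}^2$ as you wrote. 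The weight $[f\ast|\cdot|^{\gamma+2s}]$ must match on both sides; an unweighted $\dot H^s$ norm would require a uniform lower bound on $[f\ast|\cdot|^{\gamma+2s}]$ that you do not have. After a change of variables this splits into $-\int fQ_1(f,f)$ and $-\int fQ(f,f)$, yielding the algebraic inequality \eqref{e:cC}, and the condition $c_R\leq C_H$ (Lemma~\ref{l:constants}) is exactly $\gamma\geq -\tfrac{d+4s}{3}$.

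Second, you omit the energy (second moment) bound entirely. Unlike true Boltzmann, this model does \emph{not} conserve $\int|v|^2 f$, and without it you cannot apply the coercivity Lemma~\ref{l:coercive} (which needs mass, energy, entropy) to get the lower ellipticity of $K_f$, nor can you run the $L^\infty$ barrier argument. The paper obtains a Gr\"onwall bound on the energy (Lemma~\ref{l:energy}) precisely when $\gamma<-2$; this is where that hypothesis enters, not in the Hardy step. Finally, the condition $\gamma\geq -2s-4s/d$ is not an ``interpolation/moment balance'' in the $L^2$ estimate: it arises in the barrier proof of the $L^\infty$ bound (Proposition~\ref{p:Linfty}), where one needs the negative diffusion contribution $\sim -\langle v_0\rangle^{\gamma+2s+4s/d}h^{1+2s/d}$ to dominate the reaction term.
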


We emphasize that global existence of classical solutions is not known for the homogeneous Boltzmann equation in the range of $\gamma$ and $s$ covered by Theorem \ref{t:main}. For the most important case $d=3$, our condition on $\gamma$ and $s$ simplifies to
\[
-1 - \frac 4 3 s \leq \gamma < -2.
\]
Some of our intermediate results apply also to the case $d=2$, which is ruled out in our main theorem by the condition $\gamma < -2$.

The requirement that $f_{\rm in}$ is smooth with rapid decay is certainly not sharp.  We take very smooth initial data to avoid some technical difficulties, and to obtain a clean statement of our result.

The key step in the proof of Theorem \ref{t:main} is an estimate in $L^2(\R^d_v)$ that holds for all $t\geq 0$ (Theorem \ref{t:L2} below). This estimate and its proof are inspired by a recent result for the isotropic Landau equation by Gualdani-Guillen \cite{gualdani2022isotropic}. The proof in \cite{gualdani2022isotropic} is based on a clever application of a weighted Hardy inequality, and our proof of Theorem \ref{t:L2} applies an analogous strategy, using a weighted fractional Hardy inequality that was recently proven in \cite{dyda2022hardy}.

After proving the $L^2$ estimate, we show that the energy $\int_{\R^d} |v|^2 f(t,v) \dd v$ (which is not conserved by the flow) remains bounded on finite time intervals. Next, we adapt techniques from Boltzmann theory to prove global boundedness and polynomial decay. Finally, by bootstrapping $L^2$-based energy estimates, we show that these global estimates imply continuation of solutions for large times.

%

\subsection{Local existence}

A local existence theorem is naturally needed as part of the proof of our main result. For the sake of potential future applications, we prove local existence under much weaker hypotheses than our global existence theorem. In particular, we construct a solution for both the homogeneous and inhomogeneous equations, and for all $d\geq 2$ and $(\gamma,s)\in (-d,0)\times (0,1)$ such that $\gamma+2s< 0$, i.e. all relevant values of $d$, $\gamma$, and $s$.

Let $\T^d$ denote the $d$-dimensional torus of side length one, and define the weighted $L^p$ and Sobolev spaces
\begin{equation*}
\begin{split}
L^p_q(\T^d\times\R^d) &= \{ f:\T^d_x\times\R^d_v \to \R : \|\vv^q f(x,v)\|_{L^p(\T^d\times\R^d)}\},\\
H^k_q(\T^d\times\R^d) &= \{ f:\T^d_x\times\R^d_v\to \R: \partial^\beta f \in L^p_q(\T^d\times\R^d) \text{ for all $\beta \in \mathbb N^{2d}$ with $|\beta|\leq k$}\}.
\end{split}
\end{equation*}
For functions of $v$ only, let $L^p_q(\R^d)$ and $H^k_q(\R^d)$ be defined in the analogous way, with norms over $\R^d_v$ and with $\beta \in \mathbb N^d$ instead of $\mathbb N^{2d}$. 
For the inhomogeneous equation, we have
\begin{theorem}\label{t:short-time-existence-inhom}
Let $d\geq 2$ and $(\gamma,s)\in (-d,0)\times(0,1)$ be such that $\gamma+2s<0$. For any $q> \max\{\gamma+2s+d,1\}$, and any nonnegative initial condition $f_{\rm in}\in H^{2d+2}_q(\T^d\times\R^d)$, there exists a $T>0$ depending only on $\|f_{\rm in}\|_{H^{2d+2}_q(\T^d\times\R^d)}$, 
and a unique nonnegative solution 
\[
f\in L^\infty([0,T],H^k_q(\T^d\times\R^d)) \cap W^{1,\infty}([0,T], H^{d+1}_{q-1}(\T^d\times\R^d))
\] 
to 
\[
\partial_t f + v\cdot \nabla_x f = Q(f,f).
\]
In addition, if $f_{\rm in} \in H^k_{q'}(\T^d\times\R^d)$ for any $k\geq 2d+2$ and $q' \geq q$, then $f(t)\in H^k_{q'}(\T^d\times\R^d)$ for all $t\in [0,T]$, and 
\begin{equation*}
\|f(t)\|_{H^k_{q'}(\T^d\times\R^d)} \leq \|f_{\rm in}\|_{H^k_{q'}(\T^d\times\R^d)} \exp(C t (1+\|f\|_{L^\infty([0,t],H^{2d+2}_q(\T^d\times\R^d))})) , \quad t\in [0,T].
\end{equation*}
In particular, if $\vv^{q'} f_{\rm in} \in C^\infty(\T^d\times\R^d)$ for all $q'>0$, then $\vv^{q'} f\in C^\infty([0,T]\times\T^d\times\R^d)$ for all $q'>0$, with the same $T$ as above. 
\end{theorem}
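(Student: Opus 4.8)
The plan is to construct the solution by a Picard iteration built on the Carleman representation \eqref{e:carleman}. Given the $n$-th iterate $f^n$, set $a^n := c_1\,[f^n\ast|\cdot|^{\gamma+2s}]\ge 0$ and $b^n := c_2\,[f^n\ast|\cdot|^\gamma]\ge 0$ (convolutions in $v$ only, with $t,x$ as parameters), and define $f^{n+1}$ as the solution of the \emph{linear} kinetic equation
\begin{equation*}
\partial_t f^{n+1} + v\cdot\nabla_x f^{n+1} = Q(f^n,f^{n+1}), \qquad f^{n+1}(0,\cdot)=f_{\rm in},
\end{equation*}
starting from $f^0\equiv f_{\rm in}$; by \eqref{e:carleman} the map $g\mapsto Q(f^n,g)$ is a fractional-diffusion operator of order $2s$ in $v$ with coefficient $a^n$, plus multiplication by $b^n$, so together with the transport term this is a kinetic fractional Fokker--Planck type equation with nonnegative diffusion that is nondegenerate on every bounded set of $v$. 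The first step is a well-posedness and a priori estimate for this linear problem: testing against $\vv^{2q}\partial^\alpha_{x,v}f^{n+1}$ and summing over $|\alpha|\le k$, the transport term integrates to zero in $x$ (using periodicity and that $\vv$ is $x$-independent, after absorbing the commutators $[\partial^\alpha_v,v\cdot\nabla_x]$ into the $H^k$ sum), integration by parts in $(-\Delta)^s_v$ turns the diffusion term into the dissipative quantity $-\sum_{|\alpha|\le k}\int a^n\,\vv^{2q}\,|(-\Delta)^{s/2}_v\partial^\alpha f^{n+1}|^2$ up to lower-order commutators, and every remaining term is bounded by $(\|a^n\|_{\mathcal X}+\|b^n\|_{\mathcal X})\,\|f^{n+1}\|_{H^k_q}^2$ for a suitable norm $\mathcal X$ of the coefficients. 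Grönwall's inequality then gives $\|f^{n+1}(t)\|_{H^k_q}\le\|f_{\rm in}\|_{H^k_q}\exp\!\big(Ct(1+\sup_{[0,t]}\|a^n\|_{\mathcal X}+\|b^n\|_{\mathcal X})\big)$.

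The second step is a family of weighted convolution estimates: for $\mu\in(-d,0)$ and $q>\mu+d$ one has $\big\|\vv^{-\mu}[g\ast|\cdot|^\mu]\big\|_{H^k(\R^d)}\lesssim\|g\|_{H^k_q(\R^d)}$ (and a similar $W^{k,\infty}$ bound), since the local singularity of $|\cdot|^\mu$ is integrable and the tail is summable once $g$ decays faster than $\vv^{-\mu-d}$. Applied with $\mu=\gamma+2s$ and $\mu=\gamma$---both in $(-d,0)$ because $\gamma+2s<0$, and with the required decay since $q>\max\{\gamma+2s+d,1\}$---these bound $\|a^n\|_{\mathcal X}+\|b^n\|_{\mathcal X}$ by $C(\|f^n\|_{H^{2d+2}_q})$, the exponent $2d+2$ providing enough room for the Sobolev embeddings and commutator estimates on the $2d$-dimensional phase space. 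Taking $M:=2\|f_{\rm in}\|_{H^{2d+2}_q}$ and $T>0$ small enough (depending only on $\|f_{\rm in}\|_{H^{2d+2}_q}$) that $\exp\!\big(CT(1+C(M))\big)\le 2$, an induction shows $\sup_{[0,T]}\|f^n\|_{H^{2d+2}_q}\le M$ for all $n$. The same argument run in $H^k_{q'}$ for $k\ge 2d+2$, $q'\ge q$, with the coefficient norms now already controlled by $M$, yields the propagation estimate claimed in the theorem. Each $f^{n+1}$ is nonnegative by the parabolic maximum principle, since $Q(f^n,\cdot)$ has nonnegative diffusion coefficient $a^n$ and nonnegative zeroth-order coefficient $b^n$ and free transport preserves the sign.

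To pass to the limit, put $h^n:=f^{n+1}-f^n$; bilinearity gives $\partial_t h^n+v\cdot\nabla_x h^n=Q(f^n,h^n)+Q(h^{n-1},f^n)$, and the energy estimate of the first step, now run in a lower-order space $H^{k_0}_q$ with $k_0$ a few derivatives below $2d+2$, gives $\sup_{[0,T]}\|h^n\|_{H^{k_0}_q}\le\tfrac12\sup_{[0,T]}\|h^{n-1}\|_{H^{k_0}_q}$ after possibly shrinking $T$, where the term $Q(h^{n-1},f^n)$ is handled using the uniform $H^{2d+2}_q$ bound to absorb the derivatives falling on $f^n$ and the convolution estimate (with weight $q>\gamma+2s+d$) on $h^{n-1}$. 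Hence $(f^n)$ is Cauchy in $C([0,T],H^{k_0}_q)$; its limit $f$ keeps the uniform bound in $L^\infty([0,T],H^k_q)$ by weak-$*$ compactness and, by interpolation, converges strongly in $C([0,T],H^{k'}_q)$ for every $k'<k$, which suffices to pass to the limit in each bilinear term of \eqref{e:carleman} and conclude that $f\ge 0$ solves the equation and lies in the asserted spaces. From $\partial_t f=Q(f,f)-v\cdot\nabla_x f$ one reads off the $W^{1,\infty}([0,T],H^{d+1}_{q-1})$ regularity, the transport term accounting for the loss of one $x$-derivative and one power of $\vv$ and the collision term for the loss of at most $2s<2$ velocity derivatives (the convolution factors $[f\ast|\cdot|^{\gamma+2s}]$ and $[f\ast|\cdot|^\gamma]$ being bounded with bounded derivatives). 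Uniqueness follows from the same difference estimate applied to two solutions with identical data, and letting $k\to\infty$ in the propagation estimate, together with Sobolev embedding and the weights, gives the $C^\infty$ statement.

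I expect the main obstacle to be the linear energy estimate of the first step, specifically the weight bookkeeping and the commutators between $(-\Delta)^s_v$, the polynomial weight $\vv^{2q}$, and the velocity derivatives $\partial^\alpha_v$. Unlike $-\Delta$, the operator $(-\Delta)^s$ is nonlocal, so $[\vv^{2q},(-\Delta)^{s/2}_v]$ and $[(-\Delta)^{s/2}_v,\partial^\alpha_v]$ must be estimated as genuine nonlocal operators of order below $s$, and this has to be done in tandem with the decaying diffusion coefficient $a^n\sim\vv^{\gamma+2s}$ so that the dissipative term is not overwhelmed; checking that $q>\max\{\gamma+2s+d,1\}$ and $k\ge 2d+2$ are exactly the thresholds demanded by these commutator and convolution estimates is the heart of the argument. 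By contrast, the degeneracy of $a^n$ as $|v|\to\infty$ is harmless, as it only weakens the favorable dissipation and never changes its sign, so the linear Cauchy problem remains well posed throughout.
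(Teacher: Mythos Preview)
Your overall strategy---Picard iteration on the linear problem, uniform $H^{2d+2}_q$ bounds via weighted energy estimates and Gr\"onwall, then passage to the limit---is the same as the paper's. Two points of departure are worth noting.

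First, you do not say how the linear Cauchy problem $\partial_t g + v\cdot\nabla_x g = Q(f^n,g)$ is actually solved; you call it ``a kinetic fractional Fokker--Planck type equation'' and move directly to the a priori estimate. The paper treats this as a genuine step (Lemma~\ref{l:linear-existence}): it embeds the linear isotropic Boltzmann operator in a one-parameter family $\sigma Q(f,\cdot)+(1-\sigma)\Delta_{x,v}$ connecting it to the heat equation, proves the energy estimate of Lemma~\ref{l:energy-estimate} uniformly in $\sigma$, and then invokes the method of continuity. Your sketch would need either this or some other explicit construction.

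Second, your description of the dissipative term is not quite right. You write the good term as $-\int a^n\,\vv^{2q}\,|(-\Delta)^{s/2}_v\partial^\alpha f^{n+1}|^2$, but with a \emph{variable} coefficient $a^n(v)$ one cannot simply split $(-\Delta)^s$ into two half-Laplacians and pull $a^n$ through. The paper instead works with the bilinear-form seminorm $N_{s,\gamma}^f(h)^2=\int\!\!\int K_f(v,w)|h(v+w)-h(v)|^2\,dw\,dv$ (Lemma~\ref{l:good-sign}) and a dedicated commutator estimate for $\vv^q$ against $Q$ (Lemma~\ref{l:commutator}); the extra ``cross'' term that arises when the weight sits on $v$ rather than $v+w$ is exactly what produces the $N_{s,\gamma}^f$ term that must then be absorbed. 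Your phrase ``up to lower-order commutators'' hides this, and it is precisely the place where the condition $q>\gamma+2s+d$ enters.

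Finally, you pass to the limit by showing the iterates are Cauchy in a lower-order norm (a contraction estimate on $h^n=f^{n+1}-f^n$), whereas the paper extracts a weak-$\ast$ limit in $L^\infty_t H^{2d+2}_q$ and upgrades to strong convergence by the uniform $W^{1,\infty}_t H^{d+1}_{q-1}$ bound. Both routes are standard and either works here; your contraction argument additionally yields uniqueness in one stroke, while the paper proves uniqueness separately by the same difference estimate.
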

For the homogeneous equation, we have essentially the same result, with weaker regularity assumptions on $f_{\rm in}$ needed because the domain is of smaller dimension.  Letting $\lceil x \rceil$ denote the smallest integer $\geq x$, we have:
\begin{theorem}\label{t:short-time-existence}
Let $d\geq 2$ and $(\gamma,s)\in (-d,0)\times(0,1)$ be such that $\gamma+2s<0$. For any $q> \gamma+2s+d$, and any nonnegative initial condition $f_{\rm in}\in H^{d+3}_q(\R^d)$, there exists a $T>0$ depending only on $\|f_{\rm in}\|_{H^{d+3}_q(\R^d)}$, 
and a unique nonnegative solution $f\in L^\infty([0,T],H^{d+3}_q(\R^d)) \cap W^{1,\infty}([0,T], H^{\lceil (d+1)/2\rceil}_{q}(\R^d))$ to 
\[
\partial_t f = Q(f,f).
\]
In addition, if $f_{\rm in} \in H^k_{q'}(\R^d)$ for any $k\geq d+3$ and $q' \geq q$, then $f(t)\in H^k_{q'}(\R^d)$ for all $t\in [0,T]$, and 
\begin{equation}\label{e:energy-est-local-thm}
\|f(t)\|_{H^k_{q'}(\R^d)} \leq \|f_{\rm in}\|_{H^k_{q'}(\R^d)} \exp(C t (1+\|f\|_{L^\infty([0,t],H^{d+3}_q(\R^d))})) , \quad t\in [0,T].
\end{equation}
In particular, if $\vv^{q'} f_{\rm in} \in C^\infty(\R^d)$ for all $q'>0$, then $\vv^{q'} f\in C^\infty([0,T]\times\R^d)$ for all $q'>0$, with the same $T$ as above. 
\end{theorem}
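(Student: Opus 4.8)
The plan is to construct the solution by a linearized iteration built on the Carleman form \eqref{e:carleman}, treating the homogeneous equation
\[
\partial_t f = Q(f,f) = c_1\,[f\ast|\cdot|^{\gamma+2s}]\,(-\Delta)^s f + c_2\,[f\ast|\cdot|^{\gamma}]\,f
\]
as a fractional-parabolic equation with nonlocal, $v$-dependent coefficients; the sign in \eqref{e:carleman} is such that, whenever $f\ge0$, the coefficient of the fractional-Laplacian term makes this a genuine forward (dissipative) equation. Starting from $f^0\equiv f_{\rm in}$, I would let $f^{n+1}$ solve the \emph{linear} problem
\[
\partial_t f^{n+1} = c_1\,[f^n\ast|\cdot|^{\gamma+2s}]\,(-\Delta)^s f^{n+1} + c_2\,[f^n\ast|\cdot|^{\gamma}]\,f^{n+1}, \qquad f^{n+1}(0,\cdot)=f_{\rm in}.
\]
Since $f^n\ge0$, this is a well-posed linear fractional-parabolic equation, and a nonlocal maximum principle (the reaction coefficient being bounded) gives $f^{n+1}\ge0$, keeping the scheme consistent. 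The first task is the quantitative well-posedness of this linear problem in $H^{d+3}_q(\R^d)$: testing the $\partial^\alpha$-differentiated equation against $\langle v\rangle^{2q}\partial^\alpha f^{n+1}$ and summing over $|\alpha|\le d+3$, the diffusion term produces a nonpositive contribution $-\sum_\alpha\int \psi_n\,\langle v\rangle^{2q}\,|(-\Delta)^{s/2}\partial^\alpha f^{n+1}|^2\,\dd v$, with $\psi_n=|c_1|\,[f^n\ast|\cdot|^{\gamma+2s}]\ge0$, plus commutators of $(-\Delta)^{s/2}$ with the weight $\langle v\rangle^{2q}$ and with the coefficient $[f^n\ast|\cdot|^{\gamma+2s}]$; using $f^n\in H^{d+3}_q\hookrightarrow C^2$ and weighted Kato--Ponce-type estimates, these commutators are absorbed by the dissipation and by the $L^2_q$ norm, while the reaction term is lower order. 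This yields $\frac{\dd}{\dd t}\|f^{n+1}\|_{H^{d+3}_q}^2\le C\bigl(1+\|f^n\|_{H^{d+3}_q}\bigr)\|f^{n+1}\|_{H^{d+3}_q}^2$.

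Next, with $R:=2\|f_{\rm in}\|_{H^{d+3}_q}$ and $T=T(R)$ small, the above estimate shows by induction that $\sup_{[0,T]}\|f^n\|_{H^{d+3}_q}\le R$ for all $n$. For convergence I would estimate the differences in the weaker norm $L^2_q$: by bilinearity $\partial_t(f^{n+1}-f^n)=Q(f^n-f^{n-1},f^{n+1})+Q(f^{n-1},f^{n+1}-f^n)$, and running the $L^2_q$ energy estimate with the uniform $H^{d+3}_q$ bound gives, after further shrinking $T$, a contraction factor $\tfrac12$, so $(f^n)$ is Cauchy in $C([0,T],L^2_q)$. Interpolating against the uniform $H^{d+3}_q$ bound, $f^n\to f$ in $C([0,T],H^{d+3-\delta}_q)$ for every $\delta>0$, which is enough to pass to the limit in every term of $Q(f^n,f^n)$; here one uses that $g\mapsto g\ast|\cdot|^{\gamma+2s}$ and $g\mapsto g\ast|\cdot|^{\gamma}$ are bounded on the relevant weighted Sobolev spaces, which follows by splitting the kernels at $|w|=1$ and applying Young's and Hardy--Littlewood--Sobolev inequalities (using $\gamma>-d$) --- this is precisely where the hypothesis $q>\gamma+2s+d$ enters, so that the convolutions land in the correct weighted spaces. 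The limit $f$ is the desired nonnegative solution; since $(-\Delta)^s$ costs only $2s<2$ derivatives and every other factor is more regular, $\partial_t f=Q(f,f)\in H^{\lceil(d+1)/2\rceil}_q$, giving the stated $W^{1,\infty}$ regularity, and applying the same $L^2_q$ difference estimate to two solutions also yields uniqueness in this class.

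Finally, for the propagation of higher norms, once $f\in L^\infty([0,T],H^{d+3}_q)$ is fixed, the same weighted energy estimate carried out at the $H^k_{q'}$ level closes \emph{linearly} in the top norm: the only place where all $k$ derivatives or all of the weight $\langle v\rangle^{2q'}$ fall on a single factor is the diffusion term with those on $f$ itself, which is absorbed by the dissipation up to commutators controlled by $\|f\|_{H^{d+3}_q}$; in every other term at most one factor carries the full $H^k_{q'}$ norm, the rest being estimated by Hölder in the weighted spaces and $H^{d+3}\hookrightarrow C^2$. This gives $\frac{\dd}{\dd t}\|f\|_{H^k_{q'}}^2\le C\bigl(1+\|f\|_{H^{d+3}_q}\bigr)\|f\|_{H^k_{q'}}^2$, and Grönwall produces \eqref{e:energy-est-local-thm}; letting $k,q'\to\infty$ and using the bilinearity of $Q$ to obtain time-smoothness gives the $C^\infty$ statement. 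I expect the main obstacle to be exactly these weighted commutator and product estimates for the nonlocal operator $(-\Delta)^s$ against the unbounded weight $\langle v\rangle^{2q'}$ and against the nonlocal coefficient $[f\ast|\cdot|^{\gamma+2s}]$, together with verifying that convolution with the non-integrable kernels $|\cdot|^{\gamma+2s}$, $|\cdot|^{\gamma}$ maps the relevant weighted Sobolev spaces into themselves --- this is what dictates the lower bound on $q$ and the threshold $d+3$ (coming from Sobolev embedding in dimension $d$; the inhomogeneous Theorem~\ref{t:short-time-existence-inhom} is handled the same way, the transport term $v\cdot\nabla_x f$ producing one further commutator with the weight, and the larger threshold $2d+2$ reflecting Sobolev embedding in dimension $2d$).
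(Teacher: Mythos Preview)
Your proposal is correct and follows essentially the same route as the paper: a linearized iteration $f^{n+1}$ solving $\partial_t f^{n+1}=Q(f^n,f^{n+1})$, weighted $H^k_q$ energy estimates closing via Gr\"onwall, and propagation of higher norms by the same mechanism. The only tactical differences are that the paper establishes solvability of the linear step by the method of continuity (interpolating with the heat equation via a parameter $\sigma\in[0,1]$; Lemma~\ref{l:linear-existence}) rather than invoking fractional-parabolic theory as a black box, and extracts convergence of $(f^n)$ by weak-$\ast$ compactness plus strong convergence in a lower norm rather than by your contraction argument---both are standard alternatives and lead to the same place.
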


In these results, we have made no attempt to optimize the regularity requirements on $f_{\rm in}$, since our main focus in this paper is the continuation of smooth solutions for large times. 
%
%
%

For the inhomogeneous Boltzmann equation \eqref{e:boltzmann}, short-time existence has been proven under much weaker regularity assumptions ($f_{\rm in} \in L^\infty_q$ for some $q>3+2s$, and $f_{\rm in}$ is uniformly positive in some small ball in $\T^3_x\times\R^3_v$), see \cite{HST2022irregular}. A similar existence result could be derived for our isotropic model, thereby extending the existence part of Theorem \ref{t:short-time-existence-inhom} to irregular initial data, by adapting the proofs in \cite{HST2022irregular} (and the companion paper \cite{HST2020lowerbounds} about pointwise lower bounds, which are used in \cite{HST2022irregular}), replacing the Boltzmann operator $Q_B(f,f)$ with our isotropic $Q(f,f)$. As in \cite{HST2022irregular}, a uniqueness result in relatively low-regularity spaces would require H\"older continuity and uniform-in-$x$ positivity for $f_{\rm in}$, at least with current techniques.  We do not explore the details here. See also  \cite{desvillettes2004homogeneous, desvillettes2009stability} for existence results on homogeneous Boltzmann with irregular initial data. 

These local results should not be considered as novel as our Theorem \ref{t:main}, since they are similar in spirit to known results for the true Boltzmann equation, e.g. \cite{morimoto2015polynomial, HST2020boltzmann, henderson2022existence}. (However, these cited results specialize to $d=3$, while we give a proof for any $d\geq 2$.) We have highlighted these local existence results in the introduction because they may be useful for future research.

\subsection{Comparison with Landau and isotropic Landau equations}\label{s:landau}

The Landau equation is the plasma physics counterpart of the Boltzmann equation. It reads 
\begin{equation}\label{e:landau}
\partial_t f + v\cdot \nabla_x f = Q_L(f,f),
\end{equation}
where for some $\gamma \geq -d$, the Landau collision operator can be written 
\[
Q_L(f,g) = a_{d,\gamma}\tr\left( [\Pi(\cdot)|\cdot|^{\gamma+2} \ast f] D_v^2 g\right) + c_{d,\gamma}  [f\ast |\cdot|^\gamma] g,
\]
%
where $\Pi(z)$ is the projection matrix onto $z^\perp$, and
\[
\begin{split}
a_{d,\gamma} &= \frac{\pi^{-d/2} \Gamma\left(-\frac\gamma 2\right)}{(-\gamma-2)2^{d+\gamma+1}  \Gamma\left(\frac{d+\gamma+2} 2\right)},\\
c_{d,\gamma} &= (-\gamma-2) (d+\gamma) a_{d,\gamma}.
\end{split}
\]
When $\gamma = -d$, the second term in $Q_L(f,f)$ must be replaced by $f^2$. 




In 2012, Krieger and Strain \cite{krieger2012isotropic} introduced a spatially homogeneous model equation that has since come to be known as {\it isotropic Landau}. It is obtained by removing the projection matrix from $Q_L$:
\begin{equation}\label{e:isotropic-landau}
\begin{split}
\partial_t f &= Q_{IL}(f,f),\\
Q_{IL}(f,g) &= a_{d,\gamma}[f\ast |\cdot|^{\gamma+2}] \Delta g + c_{d,\gamma} [f\ast |\cdot|^{\gamma}]g,
\end{split}
\end{equation}
As above, in the case $\gamma = -d$, one replaces $c_{d,\gamma}[f \ast |\cdot|^\gamma] f$ with $f^2$. Note that \cite{krieger2012isotropic} only considered the case $d=3, \gamma = -3$. 

Some regularity and existence results for \eqref{e:isotropic-landau} were later obtained in \cite{gressman2012isotropic, gualdani2016radial, gualdani2018isotropic, gualdani2022isotropic}. See also the review \cite{gualdani2018review}. For Coulomb potentials ($d=3, \gamma= -3$, the most physically relevant case), global existence has been proven only in the case of radially decreasing initial data \cite{gualdani2016radial}. Global existence was shown in \cite{gualdani2022isotropic} for part of the parameter range $\gamma \in [-d, -2)$, which is striking because  at that time, global existence was not known for homogeneous Landau in this parameter range. Very recently, global existence for homogeneous Landau was established in the breakthrough result \cite{guillen2023landau} for any $\gamma \in [-3,1]$. 

The Landau collision term $Q_L(f,f)$ can be seen as the limit of the Boltzmann operator $Q_B(f,f)$ when {\it grazing collisions} (collisions with $\theta \approx 0$ in \eqref{e:kernel}) predominate \cite{desvillettes1992grazing, alexandre2004landau}.  For our isotropic collision operator \eqref{e:Qiso}, a true grazing collisions limit does not make sense because the angle between velocities plays no role. The correct analogy to the grazing collisions limit is a rescaling that focuses on the singularity at $w=0$ (equivalently, $z=0$), which becomes a delta function in the limit. With our choice of normalization constant $c_{d,\gamma,s}$, this is exactly the limit $s\to 1$ in \eqref{e:Qiso}. Then one has $(-\Delta^s) f \to -\Delta f$ and $[f\ast |\cdot|^{\gamma+2s}]\to [f\ast|\cdot|^{\gamma+2}]$, and taking the limit in $c_1$ and $c_2$ in \eqref{e:c1c2}, we obtain the following important fact:
\begin{equation}\label{e:sto1}
Q(f,g) \to Q_{IL}(f,g), \quad \text{ as } s\to 1,
\end{equation}
for any sufficiently smooth functions $f$ and $g$.

In light of this convergence, one may compare our results to the main result of \cite{gualdani2022isotropic}. Sending $s\to 1$ in the condition $\gamma \geq -\frac {d+4s} 3$ in our Theorem \ref{t:main}, this would converge to $\gamma \geq -\frac{d+4}3$, which is slightly worse than the condition obtained  in \cite{gualdani2022isotropic} for the isotropic Landau equation. 
This gap exists because we need to use the $L^2$ norm in our proof (see Theorem \ref{t:L2}), whereas in \cite{gualdani2022isotropic} they manage to bound any $L^p$ norm with $1\leq p\leq \frac{d+\gamma} {-2-\gamma}$. By tracing the proof in \cite{gualdani2022isotropic} and insisting on using only the $L^2$ norm, one would get the condition $2 \leq \frac{d+\gamma} {-2-\gamma}$, or $\gamma\geq -\frac{d+4}3$, as expected.

It is interesting to note that if the Boltzmann collision operator is written in the Carleman form \eqref{e:boltz-carl}, all four equations (Boltzmann, Landau, isotropic Boltzmann, and isotropic Landau) have the same reaction term $f[f\ast |\cdot|^\gamma]$ up to a constant, at least when $\gamma > -d$.

\subsection{Open problems}

\subsubsection{Expanding the allowable range of $\gamma$ and $s$ in our main result.} 

The restrictions on $\gamma$ and $s$ in Theorem \ref{t:main} come from the following obstructions:

\begin{itemize}

\item The condition $\gamma \geq -\frac{d+4s}3$ arises in our proof that the $L^2$ norm is nonincreasing (Theorem \ref{t:L2}). Extending our $L^2$ bound to higher $L^p$ norms, as is done for isotropic Landau in \cite{gualdani2022isotropic}, would improve the allowable range of $\gamma$ and $s$. To do this, one would likely need to understand a ``divergence form'' type of structure of $Q(f,f)$, by analogy with the form $Q_{IL}(f,f) = c\nabla_v \cdot ([f\ast |\cdot|^{\gamma+2}] \nabla f - f [\nabla f\ast |\cdot|^{\gamma+2}])$  used in \cite{gualdani2022isotropic}.

\item The condition $\gamma <-2$ arises from our estimate on the energy $\int_{\R^d} |v|^2 f(t,v) \dd v$, Lemma \ref{l:energy}. It is not clear whether this estimate would be true for $\gamma > -2$. 
Without a bound on the energy, it is difficult to control the collision kernel $K_f(v,w) \approx [f\ast|\cdot|^{\gamma+2s}](v) |w|^{-d-2s}$ either from above or below. 
 Note that this issue does not arise for the homogeneous Boltzmann equation, which conserves energy.

\item The condition $\gamma \geq -2s - 4s/d$ is needed for our proof that $f$ is globally bounded, Proposition \ref{p:Linfty}. This proof uses a barrier argument. A different proof of this $L^\infty$ estimate (for example, using De Giorgi or Moser iteration) could possibly allow one to weaken or remove this condition.

\end{itemize}

We should note that the constant in the fractional Hardy inequality degenerates as $\gamma \to -d$ (i.e. the constant $C_H$ in Theorem \ref{t:hardy} approaches 0), which suggests our approach cannot be extended to very negative values of $\gamma$.

\subsubsection{The radially symmetric case} 

The first global existence result for the isotropic Landau equation \eqref{e:isotropic-landau} was obtained in  \cite{gualdani2016radial} under the assumption of initial data that is radially symmetric and monotone decreasing in $|v|$. See also the earlier works \cite{krieger2012isotropic, gressman2012isotropic} which proved a similar result for modified versions of \eqref{e:isotropic-landau}. 
These results all apply to the Coulomb case $d=3, \gamma = -3$.

Based on these results, it is natural to conjecture that global existence for the (homogeneous) isotropic Boltzmann equation for very negative values of $\gamma$ may be more tractable in the case of radially decreasing initial data. 

\subsubsection{The spatially inhomogeneous case}

As mentioned above, the inhomogeneous case of the isotropic Boltzmann equation may provide a useful model for the problem of conditional regularity in the very soft potentials regime.  We have constructed a local classical solution for the inhomogeneous equation in Theorem \ref{t:short-time-existence-inhom} with the goal of inspiring future work on this question.

\subsubsection{Results for the true Boltzmann equation} 

As with any model problem, the eventual goal is to gain insights that can be applied to the actual Boltzmann equation. One interesting, but speculative, direction is to seek an ``anisotropic fractional Hardy inequality'' adapted to the specific structure of the Boltzmann collision operator, which could play a role similar to the weighted fractional Hardy inequality of \cite{dyda2022hardy} in the present work.

\subsection{Notation} We sometimes use the abbreviations $\varphi' = \varphi(v')$, $\varphi_*' = \varphi(v_*')$, $\varphi_* = \varphi(v_*)$, and $\varphi =\varphi(v)$, where $v'$ and $v_*'$ are defined by \eqref{e:vpw}.

For $q\in \R$, we let $L^p_q(\R^d)$ denote the polynomially-weighted $L^p$ spaces with norm
\[
\|h\|_{L^p_q(\R^d)} = \|\vv^q h\|_{L^p(\R^d)},
\]
and simlarly, for integer $k$ we define the weighted Sobolev norms
\[
\|h\|_{H^k_q(\R^d)} = \|\vv^q h\|_{H^k(\R^d)}.
\]
For $s\in (0,1)$, we use the standard $H^s(\R^d)$ seminorm
\[
[h]_{H^s(\R^d)}^2 = \int_{\R^d} \int_{\R^d} \frac{|h(v+w) - h(v)|^2}{|w|^{d+2s}} \dd w \dd v,
\]
and define
\[
[h]_{H^s_q(\R^d)} = [\vv^q h]_{H^s(\R^d)}
\]
as well as
\[
\|h\|_{H^{k+s}_q(\R^d)}^2 = \|h\|_{H^k_q(\R^d)}^2 + \sum_{|\beta|=k} [\partial^\beta h]_{H^s_q(\R^d)}^2,
\]
where $|\beta|$ denotes the order of the multi-index $\beta = (\beta_1,\ldots,\beta_d)$. 

We often use the notation $A\lesssim B$ to indicate $A\leq CB$ for a constant depending on $d$, $\gamma$, $s$, and sometimes additional quantities in the statement of a given theorem or lemma. We also use $A\approx B$ to mean $A\lesssim B$ and $B\lesssim A$.

\subsection{Organization of the paper}

In Section \ref{s:prelim}, we derive some general properties of our equation, and discuss the weighted fractional Hardy inequality that we need in our proof. Section \ref{s:L2} proves  that the $L^2(\R^d)$ norm of our solution is nonincreasing, and establishes a bound on the energy (second moment) of the solution. In Section \ref{s:Linfty}, we prove an $L^\infty$ estimate and propagation of polynomial decay estimates. Section \ref{s:global} contains the proof of global existence (Theorem \ref{t:main}), and Section \ref{s:short} contains the proof of short-time existence (Theorems \ref{t:short-time-existence-inhom} and \ref{t:short-time-existence}), which is given outside its proper logical order because of its length. Appendix \ref{s:a} contains some technical lemmas.

\section{Fundamental properties and tools}\label{s:prelim}

\subsection{Weak formulation and conservation laws}\label{s:conservation}

It is well known that the Boltzmann collision operator $Q_B(f,f)$ has a useful weak formulation (originally written down by Maxwell \cite{maxwell1867}) that allows one to make sense of $\int_{\R^d} \varphi Q(f,f) \dd v$ without using any regularity of $f$. Our collision operator satisfies an analogous property. To see this, we use the pre-post-collisional change of variables $(v,v_*,w) \mapsto (v',v_*',-w)$, with unit Jacobian, to write
%
\begin{equation}\label{e:weak1}
\int_{\R^d} \varphi Q(f,f) \dd v 
= \int_{\R^d}\int_{\R^d} \int_{\R^d} B(v,v_*,w) ff_*[\varphi' - \varphi] \dd w \dd v_* \dd v.
\end{equation}
Symmetrizing this expression again with the change of variables $(v,v') \leftrightarrow (v_*,v_*')$, which also has unit Jacobian, gives
\begin{equation}\label{e:weak2}
\int_{\R^d} \varphi Q(f,f) \dd v= \frac 1 2 \int_{\R^d}\int_{\R^d} \int_{\R^d} B(v,v_*,w) ff_*[\varphi' +\varphi_*' - \varphi - \varphi_*] \dd w \dd v_* \dd v.
\end{equation}

 From \eqref{e:weak2} with the choices $\varphi(v) \equiv 1$ and $\varphi(v) = v$, we see that $\int_{\R^d} Q(f,f) \dd v  = 0$ and $\int_{\R^d} v Q(f,f) \dd v = 0$. As a result, integrating the equation \eqref{e:homogeneous} against $1$ and $v$ shows that the evolution formally conserves mass and momentum.   Note, however, that $|v'|^2 + |v_*'|^2 - |v|^2 - |v_*|^2 \neq 0$, so the energy is not conserved.

We can show that the evolution of \eqref{e:homogeneous} dissipates entropy by the same formal argument as in the Boltzmann equation: symmetrizing \eqref{e:weak2} again with the pre-post collsional change of variables, one has
\[
\int_{\R^d} \varphi Q(f,f) \dd v = -\frac 1 4 \int_{\R^d}\int_{\R^d} \int_{\R^d} B(v,v_*,w) (f'f_*' - f f_*)[\varphi' +\varphi_*' - \varphi - \varphi_*] \dd w \dd v_* \dd v.
\]
The choice $\varphi(v) = \log f(v)$ then gives
 \[
 \begin{split}
 \int_{\R^d} \log f  Q(f,f) \dd v &= -\frac 1 4 \int_{\R^d} \int_{\R^d} \int_{\R^d} B(v,v_*,w) (f'f_*' - f f_*) \log\left( \frac {f' f_*'}{f f_*}\right) \dd w \dd v_* \dd v \leq 0,
 \end{split}
 \]
 since $(x-y) (\log x - \log y) \geq 0$ for any $x,y>0$. As a result, the entropy $\int_{\R^d} f \log f \dd v$ is non-increasing in $t$ for solutions of \eqref{e:homogeneous}.

\subsection{Integro-differential form}\label{s:carleman}

In this section, we justify the formula \eqref{e:carleman} for $Q(f,f)$, that is analogous to the Carleman representation for the Boltzmann collision operator. Starting with \eqref{e:Qiso}, we add and subtract $f(v_*- w)g(v)$ inside the integral to write $Q(f,g) = Q_1(f,g)+ Q_2(f,g)$, with
\[
\begin{split}
 Q_1(f,g) &= c_{d,\gamma,s} \int_{\R^d} \int_{\R^d}  |v-v_*+w|^{\gamma+2s} |w|^{-d-2s} f(v_*-w)[g(v + w) - g(v)] \dd w \dd v_*,\\
Q_2(f,g) &=  c_{d,\gamma,s} g(v)\int_{\R^d} \int_{\R^d}  |v-v_*+w|^{\gamma+2s} |w|^{-d-2s}[ f(v_*-w)- f(v_*)] \dd w \dd v_*.
\end{split}
\]
In $Q_1$, we reverse the order of integration and observe that $\int_{\R^d} |v-v_*+w|^{\gamma+2s} f(v_*-w) \dd v_* = [f\ast |\cdot|^{\gamma+2s}](v)$, yielding
\begin{equation}\label{e:Q1}
\begin{split}
Q_1(f,g) &=  c_{d,\gamma,s}\int_{\R^d} [f\ast |\cdot|^{\gamma+2s}](v) \frac{g(v+w)- g(v)}{|w|^{d+2s}} \dd w\\ 
&=  c_{d,\gamma,s}\frac{\pi^{d/2} |\Gamma(-s)|} {4^s \Gamma(\frac{d+2s}{2})} [f\ast|\cdot|^{\gamma+2s}](v) (-\Delta)^s g.
\end{split}
\end{equation}
Alternatively, one could write
\[
\begin{split}
Q_1(f,g) &= \int_{\R^d} K_f(v,w) [g(v+w) - g(v)] \dd w,
\end{split}
\]
with 
\[ 
K_f(v,w) := 
 c_{d,\gamma,s} |w|^{-d-2s} \left[f\ast |\cdot|^{\gamma+2s}\right](v).
\]

For $Q_2$, we separate terms and make the change of variables $v_*\mapsto v_*-w$ in the first term. To avoid divergent integrals, we write the $w$ integral as a limit:
\begin{equation}\label{e:Q2der}
\begin{split}
Q_2(f,g) &= c_{d,\gamma,s}g(v) \lim_{\eps\to 0}\left(\int_{\R^d} \int_{\{|w|\geq \eps\}} |v-v_*+w|^{\gamma+2s} |w|^{-d-2s} f(v_*-w) \dd w \dd v_*\right.\\
&\left. \quad - \int_{\R^d} \int_{\{|w|\geq \eps\}}|v-v_*+w|^{\gamma+2s} |w|^{-d-2s} f(v_*) \dd w \dd v_*\right),\\
&= c_{d,\gamma,s}g(v)\lim_{\eps\to 0} \int_{\R^d} \int_{\{|w|\geq \eps\}} [|v-v_*|^{\gamma+2s} - |v-v_*+w|^{\gamma+2s}] |w|^{-d-2s}  f(v_*) \dd w\dd v_*.
\end{split}
\end{equation}
Next, we use the known formula for $(-\Delta)^s |v|^{\gamma+2s}$ \cite[Table 1]{kwasnicki2019fractional} to write
\[
\begin{split}
\lim_{\eps\to 0} \int_{\{|w|\geq \eps\}} \frac{|v|^{\gamma+2s} - |v+w|^{\gamma+2s}} {|w|^{d+2s}} \dd w &=  \frac{\pi^{d/2} |\Gamma(-s)|} {4^s \Gamma(\frac{d+2s}{2})} (-\Delta)^s |v|^{\gamma+2s}\\
&=  c_{R} |v|^\gamma.
\end{split}
\]
with $c_R$, the constant corresponding to the reaction term, given by
\begin{equation}\label{e:cR}
c_R = \frac{\pi^{d/2} |\Gamma(-s)| \Gamma(\frac{\gamma+2s+d} 2)\Gamma(\frac{-\gamma} 2)} {\Gamma(\frac{d+2s}2)\Gamma(\frac{\gamma+d} 2)\Gamma(-\frac {\gamma+2s} 2)}
\end{equation}
Evaluating this at $v-v_*$, we now have
\[ 
\begin{split}
Q_2(f,g) 
&=  c_{d,\gamma,s} c_R g(v)  \int_{\R^d} |v-v_*|^\gamma f(v_*) \dd v_*.
\end{split}
\]
The preceding (relatively simple) calculation to write $Q_2(f,g)$ in a convenient form, plays the same role as the Cancellation Lemma \cite{alexandre2000entropy} for the Boltzmann collision operator.

Combining the formulas for $Q_1$ and $Q_2$, we have, as claimed above in \eqref{e:carleman},
\[
Q(f,g)(v) = c_1[f\ast |\cdot|^{\gamma+2s}](v) (-\Delta)^s g +c_2 [f\ast |\cdot|^{\gamma}](v) g(v),
\]
with $c_1$ and $c_2$ as in \eqref{e:c1c2}. It is convenient to write $Q(f,g)$ without the normalization constant of $(-\Delta)^s$, i.e.
\[
Q(f,g) = c_{d,\gamma,s} \left( [f\ast |\cdot|^{\gamma+2s}](v) \int_{\R^d} \frac{g(v+w) - g(v)}{|w|^{d+2s}} \dd w  + c_R [f\ast |\cdot|^\gamma](v) g(v)\right),
\]
where $c_{d,\gamma,s}$ is given by \eqref{e:cdgs} and $c_R$ is given by \eqref{e:cR}. 
This form is useful because the value of $c_{d,\gamma,s}$ does not affect the structure of the equation, but the balance of constants between the terms $Q_1$ and $Q_2$ plays an important role. 

\subsection{Fractional Hardy inequality}

A functional inequality of Hardy type plays a crucial role in our proof of Theorem \ref{t:L2} below. Let us quote from the recent work of Dyda-Kijaczko \cite{dyda2022hardy}, specialized to the case $p=2$, $\alpha = 0$, $\beta = -(\gamma+2s)$:

\begin{theorem}\cite[Theorem 5]{dyda2022hardy}\label{t:hardy}
Let $0< s < 1$. For all $u\in C_c(\R^d)$, the following inequality holds:
\[
C_H \int_{\R^d} \frac{|u(v)|^2}{|v|^{-\gamma}} \dd v \leq \int_{\R^d} \int_{\R^d} \frac{|u(v+w) - u(v)|^2}{|w|^{d+2s}}  |v|^{\gamma+2s} \dd w \dd v,
\]
where 
\[
C_H  = 
\frac{\pi^{d/2}|\Gamma(-s)|}{\Gamma(\frac{d+2s} 2)} \left[ \frac{ 2 \Gamma(\frac {d-\gamma} 4) \Gamma(\frac {d+\gamma+4s} 4)} { \Gamma(\frac{d+\gamma} 4) \Gamma(\frac{d-\gamma-4s} 4)} - \frac{ \Gamma(\frac{-\gamma} 2) \Gamma(\frac{d+\gamma+2s} 2)} {\Gamma(\frac{d+\gamma} 2)\Gamma(\frac{-\gamma-2s} 2)} \right].
\]
Furthermore, the constant $C_H$ is optimal.
\end{theorem}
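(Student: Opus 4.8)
\medskip
\noindent\textbf{Proof strategy.} Write the right-hand side as the quadratic form
\[
\mathcal E[u] := \iint_{\R^d\times\R^d}\frac{|u(x)-u(y)|^2}{|x-y|^{d+2s}}\,|x|^{\gamma+2s}\dd y\dd x,
\]
which is the stated double integral after the substitution $y=v+w$. The plan is to use the \emph{ground-state substitution} that underlies sharp (fractional) Hardy inequalities. Fix a positive weight $\omega$ and write $u=\omega\phi$. The pointwise algebraic identity
\[
|u(x)-u(y)|^2 = \omega(x)\omega(y)\,|\phi(x)-\phi(y)|^2 + \phi(x)^2\,\omega(x)\big(\omega(x)-\omega(y)\big) + \phi(y)^2\,\omega(y)\big(\omega(y)-\omega(x)\big)
\]
holds identically; substituting it into $\mathcal E[u]$ and relabelling $x\leftrightarrow y$ in the last term gives
\[
\mathcal E[u] = \int_{\R^d}\phi(x)^2\,\omega(x)\bigg(\int_{\R^d}\frac{(\omega(x)-\omega(y))\big(|x|^{\gamma+2s}+|y|^{\gamma+2s}\big)}{|x-y|^{d+2s}}\dd y\bigg)\dd x + \iint_{\R^d\times\R^d}\frac{|\phi(x)-\phi(y)|^2}{|x-y|^{d+2s}}\,\omega(x)\omega(y)\,|x|^{\gamma+2s}\dd y\dd x,
\]
where the second (remainder) integral is manifestly nonnegative.

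\smallskip
\noindent Next I would take $\omega(x)=|x|^{-\alpha}$. By homogeneity the inner integral equals $C_H(\alpha)\,|x|^{-\alpha+\gamma}$, where
\[
C_H(\alpha) = \int_{\R^d}\frac{(1-|\eta|^{-\alpha})(1+|\eta|^{\gamma+2s})}{|e-\eta|^{d+2s}}\dd\eta
\]
(principal value, $e$ any fixed unit vector), so the first term is exactly $C_H(\alpha)\int_{\R^d}|u(v)|^2|v|^{\gamma}\dd v$. Discarding the nonnegative remainder yields the stated inequality with constant $C_H(\alpha)$ for every admissible $\alpha$, hence with constant $\sup_\alpha C_H(\alpha)$. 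To evaluate $C_H(\alpha)$ I would split $(1-|\eta|^{-\alpha})(1+|\eta|^{\gamma+2s}) = (1-|\eta|^{-(\alpha-\gamma-2s)}) + (|\eta|^{\gamma+2s}-|\eta|^{-\alpha})$ and recognize each resulting integral as a multiple of $(-\Delta)^s|x|^{-\beta}$ at $x=e$; the classical formula $(-\Delta)^s|x|^{-\beta}=\kappa(\beta)|x|^{-\beta-2s}$, with $\kappa(\beta)$ the explicit ratio of Gamma functions recorded in \cite{kwasnicki2019fractional}, then gives
\[
C_H(\alpha) = c_{d,s}^{-1}\Big(\kappa(\alpha-\gamma-2s)+\kappa(\alpha)-\kappa(-\gamma-2s)\Big),
\]
$c_{d,s}$ being the normalization constant of $(-\Delta)^s$.

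\smallskip
\noindent It then remains to optimize in $\alpha$ and prove sharpness. The reflection identity $\kappa(a)=\kappa(d-2s-a)$ makes $\alpha\mapsto\kappa(\alpha)+\kappa(\alpha-\gamma-2s)$ symmetric about $\alpha^{*}=\frac{d+\gamma}{2}$, and one checks that this symmetric critical point is the maximizer; moreover $\alpha^{*}$ and $\alpha^{*}-\gamma-2s$ are reflections of each other through the center $\frac{d-2s}{2}$ of $\kappa$, so $\kappa(\alpha^{*})=\kappa(\alpha^{*}-\gamma-2s)$ and
\[
C_H = c_{d,s}^{-1}\big(2\kappa(\tfrac{d+\gamma}{2})-\kappa(-\gamma-2s)\big).
\]
Inserting the Gamma values of $\kappa(\frac{d+\gamma}{2})$ and $\kappa(-\gamma-2s)$ reproduces exactly the stated constant; in fact $c_{d,s}^{-1}\kappa(-\gamma-2s)$ is precisely the reaction constant $c_R$ of \eqref{e:cR}, which is natural since $|x|^{\gamma}=c_R^{-1}(-\Delta)^s|x|^{\gamma+2s}$. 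For optimality I would test the inequality on $u_n=|x|^{-\alpha^{*}}\phi_n$, where $\phi_n$ is a smooth cutoff equal to $1$ on a large annulus that also truncates the origin singularity of $|x|^{-\alpha^{*}}$; as $n\to\infty$ the remainder integral $\iint\frac{|\phi_n(x)-\phi_n(y)|^2}{|x-y|^{d+2s}}|x|^{\gamma+2s-\alpha^{*}}|y|^{-\alpha^{*}}\dd y\dd x$ should be of lower order than the main term $C_H(\alpha^{*})\int|u_n|^2|x|^{\gamma}\dd x$, so the Rayleigh quotient converges to $C_H(\alpha^{*})$.

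\smallskip
\noindent I expect the sharpness step to be the main obstacle, because the virtual ground state $|x|^{-\alpha^{*}}$ is critical for this weight: $\int|x|^{-2\alpha^{*}}|x|^{\gamma}\dd x=\int|x|^{-d}\dd x$ diverges logarithmically at both the origin and infinity, so the cutoffs must be taken on logarithmic scales and one has to verify that the commutator/remainder terms grow strictly more slowly than the logarithmically divergent main terms. A secondary technical point is the careful justification—by analytic continuation in the exponent where necessary—of the principal-value evaluations $\int_{\R^d}\frac{|e|^{-\beta}-|\eta|^{-\beta}}{|e-\eta|^{d+2s}}\dd\eta=c_{d,s}^{-1}\kappa(\beta)$ for all values of $\beta$ that appear, and of the interchange of integrals in the first step.
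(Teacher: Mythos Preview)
The paper does not prove this theorem; it is quoted verbatim from \cite{dyda2022hardy} (specialized to $p=2$, $\alpha=0$, $\beta=-(\gamma+2s)$) and used as a black box. There is therefore no ``paper's own proof'' to compare your proposal against.

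That said, your sketch is exactly the ground-state substitution argument that underlies the cited result (and the earlier Frank--Seiringer sharp fractional Hardy inequalities). The algebraic identity for $|u(x)-u(y)|^2$ with $u=\omega\phi$, the choice $\omega=|x|^{-\alpha}$, the homogeneity computation giving the inner integral as $C_H(\alpha)|x|^{-\alpha+\gamma}$, the evaluation of $C_H(\alpha)$ via the formula $(-\Delta)^s|x|^{-\beta}=\kappa(\beta)|x|^{-\beta-2s}$, and the optimization at $\alpha^{*}=(d+\gamma)/2$ by the reflection symmetry $\kappa(a)=\kappa(d-2s-a)$ are all correct and lead to the stated constant. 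Your anticipated technical issues---the logarithmic-scale cutoffs needed for sharpness because $\int|x|^{-2\alpha^*+\gamma}\dd x=\int|x|^{-d}\dd x$ diverges at both ends, and the analytic continuation needed to make sense of the principal-value integrals for all relevant exponents---are genuine but standard, and are precisely what is carried out in \cite{dyda2022hardy}.
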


This result can easily be extended to non-compactly supported $u$ by density, as long as both sides of the inequality are finite. 

Fractional inequalities of this general type have a long history. See e.g. the result of Stein-Weiss \cite{stein1958fractional} from 1958, as well as some more recent works such as \cite{frank2008hardy, loss2010fractional, dyda2012fractional, abdellaoui2017fractional}. For our purposes in this article, we need an inequality featuring weights of a general enough form, and we need an optimal constant. It seems that \cite{dyda2022hardy} is the first result satisfying both these requirements.

To derive the form of the inequality that we use below, for each $v_*\in \R^d$, we apply Theorem \ref{t:hardy} to $u(v-v_*)$ and change variables to obtain
\[
C_H \int_{\R^d} |u(v)|^2|v-v_*|^{\gamma} \dd v \leq \int_{\R^d} \int_{\R^d} \frac{|u(v+w) - u(v)|^2}{|w|^{d+2s}}  |v-v_*|^{\gamma+2s} \dd w \dd v.
\]
Multiplying this inequality by $f(v_*)$ and integrating in $v_*$, we obtain
\begin{equation}\label{e:hardy} 
C_H\int_{\R^d} |u(v)|^2 [f\ast|\cdot|^\gamma](v)  \dd v \leq  \int_{\R^d}\int_{\R^d} \frac{|u(v+w) - u(v)|^2} {|w|^{d+2s}} [f\ast |\cdot|^{\gamma+2s}](v) \dd w \dd v.
\end{equation}

%

\section{Time-independent $L^2$ bound}\label{s:L2}

This section is devoted to the $L^2$ estimate for solutions of \eqref{e:homogeneous}, which is the key step in proving global existence. 

In the following proof, we need to use an alternate form of identity \eqref{e:weak1}:
\begin{equation}\label{e:identity}
\begin{split}
\int_{\R^d} \varphi Q(f,f) \dd v &= c_{d,\gamma,s} \int_{\R^d} \int_{\R^d} \int_{\R^d} f(v) f(v_*)\frac{\varphi(v+w) - \varphi(v)}{|w|^{d+2s}} |v-v_*+w|^{\gamma+2s} \dd w \dd v_* \dd v\\
&= c_{d,\gamma,s}\int_{\R^d} \int_{\R^d}  f(v) [f \ast |\cdot|^{\gamma+2s}](v+w) \frac{\varphi(v+w) - \varphi(v)}{|w|^{d+2s}} \dd w \dd v.
\end{split}
\end{equation}

\begin{theorem}\label{t:L2}
Let $\gamma$ and $s$ satisfy
\[
\gamma \geq -\frac {d+4s} 3,
\]
and let $f\geq 0$ be a classical solution to the (homogeneous) isotropic Boltzmann equation \eqref{e:homogeneous} in $C^1([0,T), C^2(\R^d))$, with $T\leq +\infty$, such that $\int_{\R^d} f^2(0,v) \dd v < +\infty$.  

Then the $L^2$ norm of $f$ is non-increasing in time, and
\[
\int_{\R^d} f^2(t,v) \dd v \leq \int_{\R^d} f^2(0,v) \dd v, \quad t\in [0,T).
\]
\end{theorem}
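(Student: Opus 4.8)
The plan is to differentiate $\tfrac12\int f^2\,\dd v$ in time and use the equation, so that
\[
\frac{1}{2}\frac{\dd}{\dd t}\int_{\R^d} f^2 \dd v = \int_{\R^d} f\, Q(f,f) \dd v,
\]
then show the right-hand side is $\leq 0$. I would apply identity \eqref{e:identity} with $\varphi = f$ to write $\int f Q(f,f)\dd v$ as a double integral involving the difference quotient $\frac{f(v+w)-f(v)}{|w|^{d+2s}}$ weighted by $f(v)[f\ast|\cdot|^{\gamma+2s}](v+w)$. The standard trick is to symmetrize: combine the $(v,w)$ form with the version obtained by the substitution $(v,w)\mapsto(v+w,-w)$, producing a manifestly sign-definite piece $-\tfrac12\int\int \frac{|f(v+w)-f(v)|^2}{|w|^{d+2s}}[f\ast|\cdot|^{\gamma+2s}](v)\dd w\dd v$ (the "good" dissipation term) plus a remainder term involving $f(v)f(v+w)$ times the difference of the two weights $[f\ast|\cdot|^{\gamma+2s}](v+w)-[f\ast|\cdot|^{\gamma+2s}](v)$, which after using the known value $(-\Delta)^s|\cdot|^{\gamma+2s}=c_R|\cdot|^\gamma$ should reduce to a reaction-type term $c\int f^2 [f\ast|\cdot|^\gamma]\dd v$ (the "bad" term). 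Equivalently, one can decompose directly via \eqref{e:carleman}: $\int f Q(f,f) = c_1\int [f\ast|\cdot|^{\gamma+2s}] f(-\Delta)^s f\,\dd v + c_2\int [f\ast|\cdot|^\gamma] f^2\,\dd v$, and integrate the first term by parts in the fractional sense.

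The crux is then to absorb the bad reaction term into the good dissipation term using the weighted fractional Hardy inequality \eqref{e:hardy}. That inequality says precisely
\[
C_H\int_{\R^d} |u|^2 [f\ast|\cdot|^\gamma]\dd v \leq \int_{\R^d}\int_{\R^d}\frac{|u(v+w)-u(v)|^2}{|w|^{d+2s}}[f\ast|\cdot|^{\gamma+2s}](v)\dd w\dd v,
\]
so taking $u=f$ controls exactly the reaction term by the dissipation term. Hence $\int f Q(f,f)\dd v \leq 0$ provided the constant in front of the reaction term does not exceed $C_H$ times the constant in front of the dissipation term. This inequality between explicit products of Gamma functions is where the hypothesis $\gamma\geq -\tfrac{d+4s}{3}$ must enter: I would reduce the desired inequality to an elementary condition on $\gamma,s,d$ and check that $\gamma\geq-\tfrac{d+4s}{3}$ suffices, possibly after simplifying the ratio $C_H \cdot c_{d,\gamma,s} / (c_R c_{d,\gamma,s})$ (i.e. comparing $C_H$ with $c_R$ times appropriate normalization) using duplication/reflection formulas for $\Gamma$.

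Two technical points need care. First, the differentiation under the integral sign and the identity $\tfrac{\dd}{\dd t}\tfrac12\|f\|_2^2 = \int f Q(f,f)$ must be justified given only $f\in C^1([0,T),C^2(\R^d))$ and $\|f(0)\|_2<\infty$; since no a priori decay is assumed, I would first need to argue $\|f(t)\|_2$ stays finite — perhaps via a truncation/cutoff argument (multiply by a smooth spatial cutoff $\chi_R$, estimate the commutator error, and let $R\to\infty$), using that $f\geq 0$ and mass is formally conserved to control lower-order terms. Second, one must verify the finiteness of the dissipation integral and of $\int f^2[f\ast|\cdot|^\gamma]$ so that \eqref{e:hardy} applies with $u=f$ (the density extension remark after Theorem \ref{t:hardy} requires both sides finite). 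I expect the main obstacle to be precisely this rigorous justification of the energy identity and the finiteness of all terms for merely $C^2$, non-decaying solutions — the algebra of checking $C_H \geq (\text{const})$ under $\gamma\geq-\tfrac{d+4s}{3}$ is lengthy but mechanical, whereas making the formal computation honest (cutoffs, passing to the limit, controlling the singular kernel near $w=0$ using the $C^2$ regularity) is the genuinely delicate part.
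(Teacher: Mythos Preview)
Your approach is correct and lands on exactly the same constant comparison $c_R\le C_H$ as the paper, but the organization differs. Your direct symmetrization of \eqref{e:identity} under $(v,w)\mapsto(v+w,-w)$ yields (once you observe that the cross term $\int\!\!\int f(v)f(v+w)\,\frac{[f\ast|\cdot|^{\gamma+2s}](v+w)-[f\ast|\cdot|^{\gamma+2s}](v)}{|w|^{d+2s}}\,\dd w\,\dd v$ vanishes by antisymmetry, a point you should make explicit) the clean identity
\[
\int_{\R^d} f\,Q(f,f)\,\dd v \;=\; -\tfrac{c_{d,\gamma,s}}{2}\,D \;+\; \tfrac{c_{d,\gamma,s}\,c_R}{2}\int_{\R^d} f^2[f\ast|\cdot|^\gamma]\,\dd v,
\]
after which one application of \eqref{e:hardy} gives $\int fQ(f,f)\le \tfrac{c_{d,\gamma,s}}{2}\bigl(\tfrac{c_R}{C_H}-1\bigr)D\le 0$ under $c_R\le C_H$. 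The paper instead starts from the $Q_1+Q_2$ split, applies Hardy to the $Q_2$ term, writes the resulting dissipation $D$ back as a combination of $\int fQ(f,f)$ and $\int fQ_1(f,f)$, and then proves separately that $\int fQ_1(f,f)\le 0$; this yields \eqref{e:cC} and the same threshold. Your route is more direct and avoids the auxiliary sign argument for $\int fQ_1$. In both cases the condition $c_R\le C_H\iff \gamma\ge -\tfrac{d+4s}{3}$ is the content of Lemma~\ref{l:constants}, which is the ``mechanical'' Gamma-function check you anticipated. Your remarks about rigor (cutoffs, finiteness of $D$ and of $\int f^2[f\ast|\cdot|^\gamma]$) are well taken; the paper's proof is written at the same formal level, relying on the fact that in the application (Theorem~\ref{t:main}) the solutions are smooth with rapid decay.
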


\begin{proof}
Integrate the equation against $f$:
\begin{equation}\label{e:energy-est-L2}
 \frac 1 2 \frac d {dt} \int_{\R^d} f^2 \dd v = \int_{\R^d} f Q(f,f) \dd v.
 \end{equation}
 With the $Q_1+Q_2$ decomposition discussed in Section \ref{s:carleman}, this right-hand side equals
 \begin{equation}
 \begin{split}
\int_{\R^d} f Q(f,f) \dd v &= \int_{\R^d} f  (Q_1(f,f) + Q_2(f,f)) \dd v\\
 &=  c_{d,\gamma,s}\left(\int_{\R^d} \int_{\R^d} f(v) [f\ast|\cdot|^{\gamma+2s}](v) \frac{f(v+w) - f(v)}{|w|^{d+2s}} \dd w \dd v\right.\\
 &\quad \left.+ c_R\int_{\R^d} f^{2}(v) [f\ast|\cdot|^\gamma](v) \dd v\right).
\end{split}
\end{equation}
Applying the Hardy inequality \eqref{e:hardy} to the second term on the right, with $u = f$, we obtain
\[
\begin{split}
c_R \int_{\R^d} f^2(v) [f\ast |\cdot|^\gamma](v) \dd v &\leq \frac {c_R} {C_H} \int_{\R^d}\int_{\R^d} \frac{|f(v+w) - f(v)|^2}{|w|^{d+2s}} [f\ast |\cdot|^{\gamma+2s}](v) \dd w \dd v\\
&\leq \frac {c_R} {C_H} \left( \int_{\R^d}\int_{\R^d}  f(v+w)\frac{f(v+w) - f(v)}{|w|^{d+2s}} [f\ast |\cdot|^{\gamma+2s}](v) \dd w \dd v\right.\\
&\quad \left. + \int_{\R^d} \int_{\R^d} f(v) \frac{f(v) - f(v+w)}{|w|^{d+2s}} [f\ast |\cdot|^{\gamma+2s}](v) \dd w \dd v\right).
\end{split}
\]
We notice that $c_{d,\gamma,s}$ times the second integral on the right is equal to $- \int_{\R^d} f Q_1(f,f) \dd v$.  For the first integral on the right, we change variables to exchange $v$ and $v+w$ and use formula \eqref{e:identity} with $\varphi = f$ to write
\[
\begin{split}
c_{d,\gamma,s}\int_{\R^d}\int_{\R^d}  f(v+w) & \frac{f(v+w) - f(v)}{|w|^{d+2s}} [f\ast |\cdot|^{\gamma+2s}](v) \dd w \dd v\\
 &= c_{d,\gamma,s}\int_{\R^d}\int_{\R^d}  f(v)\frac{f(v) - f(v+w)}{|w|^{d+2s}} [f\ast |\cdot|^{\gamma+2s}](v+w) \dd w \dd v\\
&= -\int_{\R^d} f Q(f,f) \dd v.
\end{split}
\]
We therefore have
\[
\int_{\R^d} f Q(f,f) \dd v \leq \left( 1 - \frac {c_R} {C_H}\right) \int_{\R^d} f Q_1(f,f) \dd v - \frac {c_R} {C_H} \int_{\R^d} f Q(f,f) \dd v,
\]
or
\begin{equation}\label{e:cC}
\int_{\R^d} f Q(f,f) \dd v \leq \left( 1 + \frac {c_R} {C_H}\right)^{-1} \left( 1 - \frac {c_R} {C_H}\right) \int_{\R^d} f Q_1(f,f) \dd v.
\end{equation}
Next, we claim that $\int_{\R^d} f Q_1(f,f) \dd v \leq 0$. Indeed, using \eqref{e:identity} with the change of variables $v\leftrightarrow v+w$ again,
\[
\begin{split}
\int_{\R^d} f Q_1(f,f) \dd v &= c_{d,\gamma,s} \int_{\R^d}\int_{\R^d} f(v) \frac{f(v+w) - f(v)}{|w|^{d+2s}} [f\ast |\cdot|^{\gamma+2s}](v) \dd w \dd v\\
&= - c_{d,\gamma,s}\int_{\R^d}\int_{\R^d} \frac{|f(v+w) - f(v)|^2}{|w|^{d+2s}} [f\ast |\cdot|^{\gamma+2s}](v) \dd w \dd v\\
&\quad + c_{d,\gamma,s} \int_{\R^d} \int_{\R^d} f(v+w) \frac{f(v+w) - f(v)}{|w|^{d+2s}} [f\ast |\cdot|^{\gamma+2s}](v) \dd v\\
&\leq  - \int_{\R^d} f Q(f,f) \dd v\\
&= -\int_{\R^d} f Q_1(f,f) \dd v - \int_{\R^d} f Q_2(f,f)\dd v,
\end{split}
\]
which implies $2\int_{\R^d} f Q_1(f,f)\dd v \leq -\int_{\R^d} f Q_2(f,f) \dd v = -\int_{\R^d} f^2 [f\ast|\cdot|^\gamma] \dd v \leq 0$.

We conclude $\int_{\R^d} f Q(f,f) \dd v \leq 0$ whenever $c_R/{C_H} \leq 1$. Using this in \eqref{e:energy-est-L2}, we see that
\[
\frac 1 2 \frac d {dt}\int_{\R^d} f^2 \dd v \leq 0,
\]
as desired. From Lemma \ref{l:constants} below, the condition $c_R/{C_H}\leq 1$ is true exactly when $\gamma \geq -\frac {d+4s} 3$.
\end{proof}

\begin{lemma}\label{l:constants}
For $d$, $s$, and $\gamma$ such that 
\[
-2s> \gamma \geq -\frac {d+4s} 3,
\]
one has $c_R\leq C_H$, where $c_R$ is defined in \eqref{e:cR} and $C_H$ is the constant from the weighted fractional Hardy inequality, Theorem \ref{t:hardy}.
\end{lemma}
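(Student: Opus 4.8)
The plan is to reduce the claimed inequality, after cancelling the common prefactor of $c_R$ and $C_H$, to a single inequality between products of two Gamma ratios, and then to win it by a concavity/symmetrization argument.

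First I would observe that both $c_R$ and $C_H$ carry the common factor $P:=\pi^{d/2}|\Gamma(-s)|/\Gamma(\tfrac{d+2s}{2})$, and --- crucially --- that the \emph{second} term inside the bracket defining $C_H$ is exactly $c_R/P$, since $\tfrac{\gamma+2s+d}{2}=\tfrac{d+\gamma+2s}{2}$, $\tfrac{\gamma+d}{2}=\tfrac{d+\gamma}{2}$, and $-\tfrac{\gamma+2s}{2}=\tfrac{-\gamma-2s}{2}$. Hence $C_H=P\cdot(\text{first bracket term})-c_R$, and $c_R\le C_H$ is equivalent to
\[
\frac{\Gamma\!\big(\tfrac{-\gamma}{2}\big)\,\Gamma\!\big(\tfrac{d+\gamma+2s}{2}\big)}{\Gamma\!\big(\tfrac{d+\gamma}{2}\big)\,\Gamma\!\big(\tfrac{-\gamma-2s}{2}\big)}\;\le\;\frac{\Gamma\!\big(\tfrac{d-\gamma}{4}\big)\,\Gamma\!\big(\tfrac{d+\gamma+4s}{4}\big)}{\Gamma\!\big(\tfrac{d+\gamma}{4}\big)\,\Gamma\!\big(\tfrac{d-\gamma-4s}{4}\big)}.
\]
I would also record the elementary consequences of the hypotheses: since $-\tfrac{d+4s}{3}\le\gamma<-2s$, the parameter range is nonempty only if $d>2s$, which in turn forces $-d<\gamma<-2s<0$; these bounds will guarantee positivity of every Gamma argument below.

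Next I would introduce $F(a):=\Gamma(a+s)/\Gamma(a)$ for $a>0$. In each of the four ratios above, numerator and denominator arguments differ by exactly $s$, so matching the pairs gives that the left-hand side equals $F\big(\tfrac{d+\gamma}{2}\big)F\big(\tfrac{-\gamma-2s}{2}\big)$ and the right-hand side equals $F\big(\tfrac{d+\gamma}{4}\big)F\big(\tfrac{d-\gamma-4s}{4}\big)$. The key structural point is that the two argument-pairs share the arithmetic mean $m:=\tfrac{d-2s}{4}$, because $\tfrac{d+\gamma}{2}+\tfrac{-\gamma-2s}{2}=\tfrac{d+\gamma}{4}+\tfrac{d-\gamma-4s}{4}=\tfrac{d-2s}{2}$. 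Writing the arguments as $m\pm\delta_1$ and $m\pm\delta_2$ with $\delta_1=\tfrac{d+2\gamma+2s}{4}$ and $\delta_2=\tfrac{\gamma+2s}{4}$, the inequality becomes $G(\delta_1)\le G(\delta_2)$, where $G(\delta):=\log F(m+\delta)+\log F(m-\delta)$.

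Finally I would argue that $\log F(a)=\log\Gamma(a+s)-\log\Gamma(a)$ is strictly concave on $(0,\infty)$: $(\log F)''(a)=\psi'(a+s)-\psi'(a)<0$ because the trigamma function $\psi'(x)=\sum_{n\ge0}(x+n)^{-2}$ is strictly decreasing. (Equivalently, $1/F(a)=B(a,s)/\Gamma(s)$ is log-convex in $a$, being the moment integral $\int_0^1 t^{a-1}(1-t)^{s-1}\,dt$.) Using the hypotheses one checks $m-|\delta_1|=\tfrac{-\gamma-2s}{2}>0$ and $m-|\delta_2|=\tfrac{d+\gamma}{4}>0$, so $|\delta_1|,|\delta_2|<m$ and all arguments $m\pm\delta_i$ (hence also $m\pm\delta_i+s$) lie in $(0,\infty)$; therefore $G$ is a smooth even concave function on $(-m,m)$, hence nonincreasing in $|\delta|$. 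It thus suffices to verify $|\delta_1|\ge|\delta_2|$. Since $\gamma+2s<0$ we have $\delta_2<0$ with $|\delta_2|=\tfrac{-(\gamma+2s)}{4}$, and $\delta_1>0$ (using $\gamma\ge-\tfrac{d+4s}{3}$ together with $d>2s$), so $|\delta_1|\ge|\delta_2|$ unwinds to $d+2\gamma+2s\ge-(\gamma+2s)$, i.e.\ $d+3\gamma+4s\ge0$, i.e.\ $\gamma\ge-\tfrac{d+4s}{3}$ --- exactly the hypothesis, with equality precisely on the boundary. The only real obstacle is the bookkeeping in the first step: spotting that the subtracted term of $C_H$ reproduces $c_R/P$ and then organizing the surviving Gamma ratios into $F$-pairs with a common midpoint; once that is set up, the rest is a standard symmetrization of a concave function.
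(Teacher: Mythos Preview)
Your proof is correct and is a genuinely different---and cleaner---route than the paper's. Both proofs reduce $c_R\le C_H$ to the same Gamma inequality (the paper writes it as $\Phi(d,s,\gamma)\ge 1$) and both ultimately rest on the fact that the trigamma function is decreasing. The difference is in how this fact is deployed. The paper checks directly that $\Phi(d,s,-\tfrac{d+4s}{3})=1$ and then differentiates $\Phi$ in $\gamma$, expressing $\partial_\gamma\Phi/\Phi$ as a signed sum of eight digamma values; it then groups these into four mean-value pairs and, through a case analysis on the sign of $\gamma+4s$ and the position of $\gamma$ relative to $-d/2$, shows each pair contributes with the right sign. Your approach instead recognizes the structural symmetry up front: writing both sides as $F(m+\delta)F(m-\delta)$ with the \emph{same} midpoint $m=\tfrac{d-2s}{4}$ collapses the whole problem to a single comparison $|\delta_1|\ge|\delta_2|$, and the log-concavity of $F(a)=\Gamma(a+s)/\Gamma(a)$ (equivalently, log-convexity of the Beta function in its first argument) handles all cases at once with no need to differentiate in $\gamma$ or split into subcases. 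Your argument also makes it transparent why the threshold is exactly $\gamma=-\tfrac{d+4s}{3}$: this is precisely where $|\delta_1|=|\delta_2|$.
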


Numerical computations show that this lemma is sharp, i.e. $c_R>C_H$ whenever $\gamma \in (-d, -\frac {d+4s}3)$.

\begin{proof}
Recalling \eqref{e:cR} and Theorem \ref{t:hardy}, we have
\[ 
\begin{split}
\frac {c_R} {C_H} &=  \frac{\pi^{d/2} |\Gamma(-s)| \Gamma(\frac{\gamma+2s+d} 2)\Gamma(\frac{-\gamma} 2)} {\Gamma(\frac{d+2s}{2})\Gamma(\frac{\gamma+d} 2)\Gamma(-\frac {\gamma+2s} 2)} \left( \frac{\pi^{d/2}|\Gamma(-s)|}{\Gamma(\frac{d+2s} 2)} \left[ \frac{ 2 \Gamma(\frac {d-\gamma} 4) \Gamma(\frac {d+\gamma+4s} 4)} { \Gamma(\frac{d+\gamma} 4) \Gamma(\frac{d-\gamma-4s} 4)} - \frac{ \Gamma(\frac{-\gamma} 2) \Gamma(\frac{d+\gamma+2s} 2)} {\Gamma(\frac{d+\gamma} 2)\Gamma(\frac{-\gamma-2s} 2)} \right]\right)^{-1}\\
&= \frac{ \Gamma(\frac{\gamma+2s+d} 2)\Gamma(\frac{-\gamma} 2)} {\Gamma(\frac{\gamma+d} 2)\Gamma(-\frac {\gamma+2s} 2)} \left(   \frac{ 2 \Gamma(\frac {d-\gamma} 4) \Gamma(\frac {d+\gamma+4s} 4)} { \Gamma(\frac{d+\gamma} 4) \Gamma(\frac{d-\gamma-4s} 4)} - \frac{ \Gamma(\frac{-\gamma} 2) \Gamma(\frac{d+\gamma+2s} 2)} {\Gamma(\frac{d+\gamma} 2)\Gamma(\frac{-\gamma-2s} 2)} \right)^{-1}\\
&= \left(   \frac{ 2 \Gamma(\frac {d-\gamma} 4) \Gamma(\frac {d+\gamma+4s} 4) \Gamma(\frac{d+\gamma} 2)\Gamma(\frac{-\gamma-2s} 2)   } { \Gamma(\frac{d+\gamma} 4) \Gamma(\frac{d-\gamma-4s} 4) \Gamma(\frac{-\gamma} 2) \Gamma(\frac{d+\gamma+2s} 2) } - 1  \right)^{-1}.
\end{split}
\]
This expression is $\leq 1$ whenever
\begin{equation}\label{e:Psi-def}
\Phi(d,s,\gamma) :=  \frac{  \Gamma(\frac {d-\gamma} 4) \Gamma(\frac {d+\gamma+4s} 4) \Gamma(\frac{d+\gamma} 2)\Gamma(\frac{-\gamma-2s} 2)   } { \Gamma(\frac{d+\gamma} 4) \Gamma(\frac{d-\gamma-4s} 4) \Gamma(\frac{-\gamma} 2) \Gamma(\frac{d+\gamma+2s} 2) } \geq 1.
\end{equation}
For $\gamma \in (-\frac{d+4s}{3}, -2s)$, all of the evaluations of the $\Gamma$ function in \eqref{e:Psi-def} are well-defined and nonzero. We also observe directly that 
\[
\Phi\left(d,s,- \frac{d+4s} 3\right) = \frac{  \Gamma(\frac {d+s}3) \Gamma(\frac {d+4s} 6) \Gamma(\frac{d-2s} 3)\Gamma(\frac{d-2s} 6)   } { \Gamma(\frac{d-2s} 6) \Gamma(\frac{d-2s} 3) \Gamma(\frac{d+4s} 6) \Gamma(\frac{d+s} 3) } = 1.
\] 
Therefore, the lemma will follow from showing $\Phi(d,s,\gamma)$ is increasing in $\gamma \in (-\frac{d+4s} 3, -2s)$, for each fixed $d$ and $s$. Taking derivatives, and letting $\psi$ denote the digamma function, $\psi = \Gamma'/\Gamma$, we have
\begin{equation}\label{e:Psi-d}
\begin{split}
\partial_\gamma \Phi(d,s,\gamma) &= \Phi(d,s,\gamma)\left[ -\frac 1 4 \psi\left(\frac{d-\gamma} 4 \right) -\frac 1 4 \psi\left( \frac{d+\gamma} 4\right) +\frac 1 4\psi\left(\frac{d+\gamma+4s}4\right) + \frac 1 4 \psi\left( \frac{d-\gamma-4s} 4\right)  \right.\\
&\qquad \qquad \left.+ \frac 1 2 \psi\left( \frac{d+\gamma} 2\right) + \frac 1 2 \psi\left( \frac{-\gamma} 2 \right) - \frac 1 2 \psi\left(\frac{-\gamma-2s} 2 \right) - \frac 1 2 \psi\left( \frac{d+\gamma+2s} 2\right) \right].
\end{split}
\end{equation}
It is well-known that the digamma function $\psi$ is strictly concave on $(0,\infty)$, so $\psi'$ is decreasing. To use this fact, we consider the first four terms on the right in \eqref{e:Psi-d}, in two cases. If $\gamma < -4s$, then for some $z_1, z_2$ with 
\[
\frac{d+\gamma} 4 < z_1 < \frac{d+\gamma+4s} 4 < \frac {d-\gamma-4s} 4 < z_2 < \frac{d-\gamma}4 ,
\]
there holds
\[
\begin{split}
-\frac 1 4  \psi\left(\frac{d-\gamma} 4 \right) -\frac 1 4 \psi\left( \frac{d+\gamma} 4\right) &+\frac 1 4\psi\left(\frac{d+\gamma+4s}4\right) + \frac 1 4 \psi\left( \frac{d-\gamma-4s} 4\right)\\
&= \frac s 4 [\psi'(z_1) - \psi'(z_2)] > 0,
\end{split}
\]
since $\psi'$ is decreasing. On the other hand, if $\gamma \geq -4s$, we pair the four terms differently and find $z_3,z_4$ with
\[
\frac{d+\gamma} 4 < z_3 < \frac{d-\gamma-4s} 4 \leq \frac {d+\gamma+4s} 4 < z_4 < \frac{d-\gamma}4 ,
\]
such that 
\[
\begin{split}
-\frac 1 4  \psi\left(\frac{d-\gamma} 4 \right) -\frac 1 4 \psi\left( \frac{d+\gamma} 4\right)& +\frac 1 4\psi\left(\frac{d+\gamma+4s}4\right) + \frac 1 4 \psi\left( \frac{d-\gamma-4s} 4\right)\\
&= \frac 1 4 \left( \frac {-\gamma} 2 -s\right)[\psi'(z_3) - \psi'(z_4)] > 0,
\end{split}
\]
since $\frac {-\gamma} 2 -s > 0$. We analyze the last four terms in \eqref{e:Psi-d} in a similar manner: if $-\frac d 2 \leq \gamma < -2s$, then there exist $z_5, z_6$ such that
\[
\frac {-\gamma-2s} 2 < z_5 <    \frac{-\gamma} 2 \leq \frac{d+\gamma} 2 < z_6 < \frac{d+\gamma+2s} 2,
\]
and 
\[
\begin{split}
 \frac 1 2 \psi\left( \frac{d+\gamma} 2\right) + \frac 1 2 \psi\left( \frac{-\gamma} 2 \right)& - \frac 1 2 \psi\left(\frac{-\gamma-2s} 2 \right) - \frac 1 2 \psi\left( \frac{d+\gamma+2s} 2\right)\\
 &= \frac s 2 [\psi'(z_5) -   \psi'(z_6)] >0.
\end{split}
\] 
If $-\frac d 2 - s < \gamma < -\frac  d 2$, then there exist $z_7,z_8$ with
\[
\frac {-\gamma-2s} 2 < z_7 <  \frac{d+\gamma} 2 <  \frac{-\gamma} 2   < z_8 < \frac{d+\gamma+2s} 2,
\]
and
\[
\begin{split}
 \frac 1 2 \psi\left( \frac{d+\gamma} 2\right) + \frac 1 2 \psi\left( \frac{-\gamma} 2 \right)& - \frac 1 2 \psi\left(\frac{-\gamma-2s} 2 \right) - \frac 1 2 \psi\left( \frac{d+\gamma+2s} 2\right)\\
 &= \frac s 2 [\psi'(z_7) -   \psi'(z_8)] >0.
\end{split}
\] 
 We have shown $\partial_\gamma \Phi(d,s,\gamma)>0$ whenever $\gamma > -\frac d 2 -s$. Since $-\frac{d+4s} 3 > -\frac d 2 -s$, the proof is complete.
\end{proof}

Next, we show that our $L^2$ bound implies a bound on the energy $\int_{\R^d} |v|^2 f(t,v) \dd v$ on any finite time interval:

\begin{lemma}\label{l:energy}
Assume 
\[
\max\{-\frac d 2 - 1, -d-1-2s\}< \gamma < -2,
\]
and let $f\geq 0$ be a classical solution of \eqref{e:homogeneous} in $C^1([0,T), C^2(\R^d))$ with finite mass and energy at $t=0$, i.e.
\[
\int_{\R^d} (1+|v|^2) f(0,v) \dd v < +\infty,
\]
and whose $L^2$ norm is nonincreasing, $\int_{\R^d} f^2(t,v) \dd v \leq \int_{\R^d} f^2(0,v)\dd v$. 

Then the energy of $f$ satisfies the inequality
\[
\begin{split}
\int_{\R^d} |v|^2 f \dd v &\leq \exp\left( C t \left(\|f_{\rm in}\|_{L^1(\R^d)} + \|f_{\rm in}\|_{L^2(\R^d)}\right)\right)\\
&\quad \times \left( \int_{\R^d} |v|^2 f(0,v) \dd v + C \left(\|f_{\rm in}\|_{L^1(\R^d)} + \|f_{\rm in}\|_{L^2(\R^d)}\right)\right),
\end{split}
\]
for a constant $C>0$ depending only on $d$, $\gamma$, and $s$. 
\end{lemma}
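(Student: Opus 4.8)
The plan is to differentiate the energy in time and estimate $\int |v|^2 Q(f,f)\dd v$ using the weak formulation \eqref{e:weak2} with the test function $\vp(v)=|v|^2$. Since momentum is conserved, writing $v'=v+w$ and $v_*'=v_*-w$ one computes the symmetrized weight $\vp'+\vp_*'-\vp-\vp_* = 2|w|^2 + 2w\cdot(v-v_*)$, so that
\[
\frac d{dt}\int_{\R^d}|v|^2 f\dd v = c_{d,\gamma,s}\int_{\R^d}\!\int_{\R^d}\!\int_{\R^d} |v-v_*+w|^{\gamma+2s}|w|^{-d-2s}\bigl(|w|^2 + w\cdot(v-v_*)\bigr) f f_*\dd w\dd v_*\dd v.
\]
The $w\cdot(v-v_*)$ term is odd-looking but does not vanish because of the weight; nonetheless both contributions are controlled the same way. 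The key point is that $|w|^2|w|^{-d-2s} = |w|^{-d-2s+2}$ is integrable near $w=0$ (as $s<1$) and the combined weight $|v-v_*+w|^{\gamma+2s}|w|^{2-d-2s}$, after integrating in $w$, behaves like a convolution kernel $|v-v_*|^{\gamma+2}$ up to constants — this is again a standard fractional-Laplacian-of-a-power computation, analogous to the one carried out for $Q_2$ in Section \ref{s:carleman}. So, up to harmless lower-order terms, I expect
\[
\frac d{dt}\int_{\R^d}|v|^2 f\dd v \lesssim \int_{\R^d}\int_{\R^d} f(v) f(v_*)\,|v-v_*|^{\gamma+2}\bigl(1 + |v| + |v_*|\bigr)\dd v_*\dd v.
\]

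Next I would bound the right-hand side. Since $\gamma < -2$, the exponent $\gamma+2 < 0$, so $|v-v_*|^{\gamma+2}$ is a singular (locally integrable, since $\gamma+2 > -d$ under the hypothesis $\gamma > -d-1-2s$, hence certainly $\gamma+2>-d$) kernel. Split the $v_*$-integral into the region $\{|v-v_*|\le 1\}$, where I use Hölder together with the $L^2$ bound: $\int_{|v-v_*|\le 1} f(v_*)|v-v_*|^{\gamma+2}\dd v_* \lesssim \|f\|_{L^2}$ provided $2(\gamma+2) > -d$, i.e. $\gamma > -\tfrac d2 - 2$, which holds since $\gamma > -\tfrac d2 - 1$; and the region $\{|v-v_*|>1\}$, where $|v-v_*|^{\gamma+2}\le 1$ and the integral is bounded by $\|f\|_{L^1}$. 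The weight $(1+|v|+|v_*|)$ is handled by $|v|+|v_*| \le |v-v_*| + 2|v_*| $ — on the near region $|v-v_*|\le 1$ this is $\lesssim 1 + |v_*|$, giving an extra factor $\vv_* f(v_*)$ whose $L^1_v L^1_{v_*}$ norm is $\|f\|_{L^1}^{1/2}(\int |v|^2 f)^{1/2}$ by Cauchy–Schwarz, or more crudely $\lesssim \|f\|_{L^1} + \int|v|^2 f$; and on $|v-v_*|>1$ one keeps $|v|^2 f(v)$ or $|v_*|^2 f(v_*)$ against the integrable-against-$L^1$ part. Assembling these, using that the $L^1$ norm is conserved and the $L^2$ norm is nonincreasing (both bounded by their initial values), one arrives at a differential inequality of the form
\[
\frac d{dt}\int_{\R^d}|v|^2 f\dd v \le C\bigl(\|f_{\rm in}\|_{L^1} + \|f_{\rm in}\|_{L^2}\bigr)\Bigl( \int_{\R^d}|v|^2 f\dd v + \|f_{\rm in}\|_{L^1} + \|f_{\rm in}\|_{L^2}\Bigr).
\]
Grönwall's inequality then yields exactly the claimed bound.

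The main obstacle is the first step: rigorously justifying the weak formulation \eqref{e:weak2} with the unbounded test function $\vp(v)=|v|^2$, and showing that the $w$-integral of the singular weight $|v-v_*+w|^{\gamma+2s}|w|^{-d-2s}(|w|^2 + w\cdot(v-v_*))$ really does reduce to a clean $|v-v_*|^{\gamma+2}$-type kernel with the right integrability. For the first concern, one approximates $|v|^2$ by bounded test functions $\vp_R$ (e.g. a smooth truncation at scale $R$) and passes to the limit, which requires the a priori knowledge that the relevant triple integral converges — this is where the decay of $f$ and the finiteness of mass and energy enter, plus the restriction $\gamma+2s+d > 0$ (automatic here) to control the weight at $w\sim 0$ and the restrictions $\gamma>-\tfrac d2-1$, $\gamma>-d-1-2s$ to control it at large $|w|$ and near the diagonal $v_*\sim v$. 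The cancellation computation itself is routine — it mirrors the $(-\Delta)^s|v|^{\gamma+2s}$ identity used for $Q_2$, now applied to $|v|^2$ and lower powers — but care is needed because individual pieces diverge and must be grouped before integrating, exactly as in \eqref{e:Q2der}. Once the reduction is in place, the remaining estimates are elementary Hölder/Young manipulations and Grönwall.
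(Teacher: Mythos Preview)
Your approach matches the paper's: differentiate the energy, apply the weak form with $\varphi=|v|^2$, reduce the $w$-integral to a kernel of size $\sim|v-v_*|^{\gamma+2}$, bound the resulting convolution by near/far splitting against the $L^2$ and $L^1$ norms, and close with Gr\"onwall. The one difference is that you invoke the symmetrized identity \eqref{e:weak2}, giving the increment $2|w|^2+2w\cdot(v-v_*)$, whereas the paper uses the unsymmetrized \eqref{e:weak1}, giving $|w|^2+2v\cdot w$. This actually matters: in the paper the $v\cdot w$ piece produces an additional kernel $|v|\,|v-v_*|^{\gamma+1}$, which is why their convolution step genuinely uses the hypothesis $\gamma+1>-d/2$ and why the energy reappears on the right, forcing the exponential bound; your symmetrized choice scales to a pure $|v-v_*|^{\gamma+2}$ kernel and, if carried through without the extra $(1+|v|+|v_*|)$ factor you inserted, would in fact yield linear-in-$t$ growth of the energy. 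The cancellation you flag as the main obstacle is exactly what the paper handles: they subtract the odd vanishing principal value $\int_{B_r}|w|^{-d-2s}(v\cdot w)\,|v-v_*|^{\gamma+2s}\dd w=0$ and then Taylor-expand $|v-v_*+w|^{\gamma+2s}-|v-v_*|^{\gamma+2s}$ to gain a factor of $|w|$, combined with explicit region-splitting at scale $r=2|v-v_*|$.
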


Note that this lower bound on $\gamma$ follows from our assumption $\gamma \geq -\frac{d+4s}3$ in Theorem \ref{t:main}.

Also, note that the condition $\gamma< -2$ is really needed for our proof. Indeed, the right-hand side of \eqref{e:integrate-against-v2} can in general be infinite if $\gamma \geq -2$. 

\begin{proof}
Integrating equation \eqref{e:homogeneous} against $|v|^2$, and using \eqref{e:weak1}, gives 
\begin{equation}\label{e:integrate-against-v2}
\begin{split}
\frac d {dt} \int_{\R^d} |v|^2  f\dd v &= \int_{\R^d} |v|^2  Q(f,f)\dd v \\
&= c_{d,\gamma,s}\int_{\R^d} \int_{\R^d}\int_{\R^d}|v-v_*+w|^{\gamma+2s} |w|^{-d-2s} f(v) f(v_*)\\
&\qquad\qquad \times [|v+w|^2 - |v|^2] \dd w \dd v_* \dd v.
\end{split}
\end{equation}
Let us consider the inner $w$ integral, which we denote
\[
I := \int_{\R^d} |v-v_*+w|^{\gamma+2s} |w|^{-d-2s} [|v+w|^2 - |v|^2] \dd w.
\]
With $r = 2|v-v_*|$, we divide $I$ as follows:
\[
I = I_1 + I_2+ I_3,
\]
with
\[
\begin{split}
I_1 &=   \int_{B_r} |v-v_*+w|^{\gamma+2s} |w|^{2-d-2s} \dd w,\\
I_2 &=  2\int_{B_r}|v-v_*+w|^{\gamma+2s} |w|^{-d-2s} v\cdot w \dd w,\\
I_3 &= \int_{\R^d\setminus B_r} |v-v-*+w|^{\gamma+2s} [|w|^2 + 2v\cdot w]\dd w.
\end{split}
\]

For $I_1$, we subdivide $B_r= B_r(0)$ into $B_r(0) \setminus B_{r/4}(v-v_*)$ and $B_{r/4}(v-v_*)$. The first part is bounded using $|v-v_*+w| \geq r/4 = |v-v_*|/2$:
\[
\int_{B_r\setminus B_{r/4}(v-v_*)} |v-v_*+w|^{\gamma+2s} |w|^{2+d-2s} \dd w \lesssim r^{\gamma+2s} \int_{B_r} |w|^{2-d-2s} \dd w \lesssim |v-v_*|^{\gamma+2}.
\]
Next, for $w\in B_{r/4}(v-v_*)$, we must have $|w|\geq |v-v_*|/2$, so 
\[
\int_{B_{r/4}(v-v_*)} |v-v_*+w|^{\gamma+2s} |w|^{2+d-2s} \dd w \lesssim |v-v_*|^{2+d-2s} \int_{B_{r/4}(0)} |u|^{\gamma+2s} \dd u \lesssim |v-v_*|^{\gamma+2}.
\]
We conclude
\[
I_1 \lesssim |v-v_*|^{\gamma+2}.
\]

For $I_2$, which is more singular as $w\to 0$, we use the fact that
\[
\mathrm{p.v.} \int_{B_r} |w|^{-d-2s} w |v-v_*|^{\gamma+2s}  \dd w = 0,
\]
to write
\[
\begin{split}
I_2 = 2\int_{B_r}& |w|^{-d-2s} v\cdot w \left[|v-v_*+w|^{\gamma+2s} - |v-v_*|^{\gamma+2s}\right]\dd w.
\end{split}
\]
Now we make the same splitting into $B_{r/4}(v-v_*)$ and $B_r(0) \setminus B_{r/4}(v-v_*)$ that was used to estimate $I_1$. In $B_r(0) \setminus B_{r/4}(v-v_*)$, letting $F(z) = |z|^{\gamma+2s}$, we can differentiate $F$ because $\gamma+2s-1 > - d$, by assumption. We therefore have
\[
\begin{split}
|v-v_*+w|^{\gamma+2s} - |v-v_*|^{\gamma+2s}  \leq & |w|\max_{\sigma\in [0,1]} |D F(v-v_*+\sigma w)|\\
&\leq C|w| |v-v_*|^{\gamma+2s-1},
\end{split}
\]
since $|v-v_*+\sigma w| \geq \frac 1 2 |v-v_*|$ for all $\sigma \in [0,1]$. This implies
\[
\begin{split}
&\int_{B_r\setminus B_{r/4}(v-v_*)} |w|^{-d-2s} v\cdot w \left[|v-v_*+w|^{\gamma+2s}  - |v-v_*|^{\gamma+2s} \right] \dd w \\
&\qquad\leq C|v-v_*|^{\gamma+2s-1} |v|\int_{B_r\setminus B_{r/4}(v-v_*)} |w|^{-d-2s+2} \dd w  \\
&\qquad \leq C|v-v_*|^{\gamma +1}|v|. 
\end{split}
\]
Next, in $B_{r/4}(v-v_*)$, we use once again that $|w|\geq \frac 1 2 |v-v_*|$:
\[
\begin{split}
&\int_{B_{r/4}(v-v_*)} |w|^{-d-2s} v\cdot w \left[|v-v_*+w|^{\gamma+2s}  - |v-v_*|^{\gamma+2s} \right] \dd w \\
 &\qquad\leq C|v-v_*|^{-d-2s+1}|v|\left(\int_{B_{r/4}(v-v_*)} |v-v_*+w|^{\gamma+2s} \dd w+ |v-v_*|^{\gamma+2s} \int_{B_{r/4}(v-v_*)} \dd w\right)\\
&\qquad\leq C |v-v_*|^{\gamma+1}|v|.
\end{split}
\]
We have shown
\[
I_2 \lesssim  |v-v_*|^{\gamma+1} |v|.
\]

Next, we estimate $I_3$. In this domain, we have $|w|\geq r = 2|v-v_*|$, which implies $|v-v_*+w| \geq |w|- |v-v_*| \geq \frac 1 2 |w|$. We also have $|v-v_*+w| \leq \frac 3 2 |w|$, so in fact $|v-v_*+w| \approx |w|$ in this region. This gives
\[
\begin{split}
I_3 
&= \int_{\R^d\setminus B_r} |w|^{-d-2s} |v-v_*+w|^{\gamma+2s}  (|w|^2 + 2v\cdot w) \dd w\\ &\lesssim \int_{\R^d\setminus B_r} |w|^{-d+\gamma+2} \dd w + |v| \int_{\R^d\setminus B_r} |w|^{-d+\gamma+1} \dd w\\
&\lesssim |v-v_*|^{\gamma+2} + |v| |v-v_*|^{\gamma+1}.
\end{split}
\]
since $r\approx |v-v_*|$ and $\gamma+2 < 0$.

Returning to \eqref{e:integrate-against-v2} and collecting our upper bounds for $I_1$, $I_2$, and $I_3$, we have
\begin{equation}\label{e:energy-step}
\begin{split}
\frac d {dt} \int_{\R^d} |v|^2  f \dd v &\leq C\int_{\R^d} \int_{\R^d} f(v) f(v_*) \left( |v-v_*|^{\gamma+2} + |v| |v-v_*|^{\gamma+1} \right)\dd v_* \dd v.
\end{split}
\end{equation}
The right hand side can be bounded by standard convolution estimates. In detail, dividing the $v_*$ integral into $B_1(v)$ and $\R^d\setminus B_1(v)$, since $\gamma+2 > \gamma+1 > - \frac d 2$, we have
\[
\int_{B_1(v)} f(v_*) |v-v_*|^{\gamma+2} \dd v_* \leq \|f\|_{L^2(\R^d)} \left(\int_{B_1(v)} |v-v_*|^{2(\gamma+2)} \dd v_* \right)^{1/2} \leq C \|f\|_{L^2(\R^d)},
\]
and 
\[
\int_{\R^d \setminus B_1(v)} f(v_*)|v-v_*|^{\gamma+2} \dd v_* \leq \int_{\R^d\setminus B_1(v)} f(v_*) \dd v_* \leq \|f\|_{L^1(\R^d)}.
\]
For the second term in \eqref{e:energy-step}, we proceed similarly, since $\gamma+1> -\frac d 2$:
\[
|v|\int_{B_1(v)} f(v_*) |v-v_*|^{\gamma+1} \dd v_* \leq |v|\|f\|_{L^2(\R^d)} \left(\int_{B_1(v)} |v-v_*|^{2(\gamma+1)} \dd v_* \right)^{1/2} \leq C |v| \|f\|_{L^2(\R^d)},
\]
and 
\[
|v|\int_{\R^d \setminus B_1(v)} f(v_*)|v-v_*|^{\gamma+1} \dd v_* \leq |v| \|f\|_{L^1(\R^d)},
\]
and we finally have
\[
\begin{split}
\frac 1 2 \frac d {dt} \int_{\R^d} |v|^2 f \dd t &\leq C\left(\|f\|_{L^1(\R^d)} + \|f\|_{L^2(\R^d)}\right) \int_{\R^d} (1+|v|)f(v) \dd v \\
&\leq  C\left(\|f\|_{L^1(\R^d)} + \|f\|_{L^2(\R^d)}\right)\left( \|f\|_{L^1(\R^d)} + \int_{\R^d}|v|^2 f \dd v\right).
\end{split}
\]
Gr\"onwall's inequality implies the conclusion of the lemma, since $\|f(t)\|_{L^1(\R^d)}$ and $\|f(t)\|_{L^2(\R^d)}$ are bounded by their values at $t=0$. 
\end{proof}

%
%
%
%

\section{Global upper bounds}\label{s:Linfty}

This section establishes bounds for our solution $f(t)$ in $L^\infty(\R^d)$, using a barrier method inspired by the approach of \cite{silvestre2016boltzmann} and \cite{imbert2018decay}. We also establish estimates in polynomially-weighted $L^\infty(\R^d)$ spaces.

\begin{lemma}\label{l:coercive}
Let $f:\R^d\to \R$ be nonnegative and satisfy
\[
\begin{split}
m_0 \leq \int_{\R^d} f \dd v &\leq M_0,\\
\int_{\R^d} |v|^2 f\dd v &\leq E_0,\\
\int_{\R^d} f \log f \dd v &\leq H_0,
\end{split}
\]
for some positive constants $m_0, M_0, E_0, H_0$. Then there exists a constant $c_0>0$ depending on $m_0$, $M_0$, $E_0$, and $H_0$, such that
\[
K_f(v,w) \geq c_0 \langle v\rangle^{\gamma+2s} |w|^{-d-2s},
\]
for any $w\in \R^d$.
\end{lemma}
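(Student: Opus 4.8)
The plan is to reduce the claimed bound on $K_f$ to a pointwise lower bound on the convolution $[f\ast|\cdot|^{\gamma+2s}]$. Indeed, by definition $K_f(v,w) = c_{d,\gamma,s}\,[f\ast|\cdot|^{\gamma+2s}](v)\,|w|^{-d-2s}$ with $c_{d,\gamma,s}>0$, so it suffices to show $[f\ast|\cdot|^{\gamma+2s}](v)\ge c\,\langle v\rangle^{\gamma+2s}$ for all $v\in\R^d$, with $c$ depending only on the hydrodynamic bounds. The mechanism that makes this work is simply that $\gamma+2s\in(-d,0)$: the exponent is negative, so $t\mapsto t^{\gamma+2s}$ is decreasing and $|v-v_*|^{\gamma+2s}$ is bounded below as soon as $|v-v_*|$ is bounded above.

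First I would localize the mass using the energy bound. By Chebyshev's inequality, $\int_{\{|v_*|\ge\rho\}} f\,\dd v_* \le \rho^{-2} E_0$, so with $R_0 := \max\{1,\sqrt{2E_0/m_0}\}$ one gets $\int_{B_{R_0}(0)} f\,\dd v_* \ge m_0 - E_0/R_0^2 \ge m_0/2$, where $R_0$ depends only on $m_0$ and $E_0$.

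Then I would split into the regions $|v|\le R_0$ and $|v|>R_0$. For $|v|\le R_0$, every $v_*\in B_{R_0}(0)$ satisfies $|v-v_*|\le 2R_0$, hence $|v-v_*|^{\gamma+2s}\ge(2R_0)^{\gamma+2s}$, and restricting the convolution integral to $B_{R_0}(0)$ gives $[f\ast|\cdot|^{\gamma+2s}](v)\ge (2R_0)^{\gamma+2s}\,m_0/2 =: c_1$; since $\langle v\rangle^{\gamma+2s}\le 1$ (because $\gamma+2s<0$ and $\langle v\rangle\ge1$), this is $\ge c_1\langle v\rangle^{\gamma+2s}$. For $|v|>R_0\ge1$, every $v_*\in B_{R_0}(0)$ satisfies $0<|v-v_*|\le |v|+R_0<2|v|\le2\langle v\rangle$, so $|v-v_*|^{\gamma+2s}\ge 2^{\gamma+2s}\langle v\rangle^{\gamma+2s}$, and again restricting to $B_{R_0}(0)$ gives $[f\ast|\cdot|^{\gamma+2s}](v)\ge 2^{\gamma+2s}\,(m_0/2)\,\langle v\rangle^{\gamma+2s}=:c_2\langle v\rangle^{\gamma+2s}$. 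Setting $c_0 := c_{d,\gamma,s}\min\{c_1,c_2\}$ completes the proof.

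There is no serious analytic obstacle here; the only points requiring care are the reversal of inequalities caused by the negative exponent $\gamma+2s$ and the elementary comparison $|v|\le\langle v\rangle$. It is worth noting that the argument uses only the lower mass bound $m_0$ and the upper energy bound $E_0$; the quantities $M_0$ and $H_0$ are not needed for this particular estimate, but are included in the hypotheses because the lemma will be applied alongside the companion upper bounds on $K_f$, where they do play a role.
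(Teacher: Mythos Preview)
Your proof is correct, and it is more elementary than the paper's. The paper invokes \cite[Lemma 4.6]{silvestre2016boltzmann}, which uses all four bounds $m_0,M_0,E_0,H_0$ to produce a set $S\subset B_r$ of measure at least $\mu$ on which $f>\ell$ \emph{pointwise}, and then bounds $\int_S |v-u|^{\gamma+2s}\,\dd u \ge \mu(|v|+r)^{\gamma+2s}$. Your argument skips the pointwise lower bound entirely: since $[f\ast|\cdot|^{\gamma+2s}](v)$ is an integral against $f$, a localized mass bound $\int_{B_{R_0}} f\ge m_0/2$ (which follows from mass and energy alone via Chebyshev) is already enough, and the rest is the same monotonicity trick with the negative exponent. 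The entropy hypothesis $H_0$ is genuinely used in Silvestre's lemma---without it $f$ could concentrate on a null set---but that obstruction is irrelevant here because the kernel $|\cdot|^{\gamma+2s}$ is locally integrable. So your observation that $M_0$ and $H_0$ are superfluous for this particular estimate is correct; the paper's route is presumably inherited from the anisotropic Boltzmann setting, where the coercivity involves integrals over hyperplanes and the pointwise lower bound on $f$ really is needed.
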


\begin{proof}
With $f$ as in the statement of the lemma, \cite[Lemma 4.6]{silvestre2016boltzmann} states that there exist  $r, \ell, \mu>0$ depending on $m_0, M_0, E_0$, and $H_0$, such that
\[
| \{v\in \R^d: f(v) > \ell\} \cap B_r| \geq \mu.
\]
Let $S = \{f(v)>\ell\} \cap B_r$. 
Since $f\geq \ell\chi_S$, we clearly have
\[
K_f(v,w) \geq  K_{\ell\chi_S}(v,w) = c_{d,s} \ell |w|^{-d-2s}\int_{\R^d} \chi_S(u) |v-u|^{\gamma+2s} \dd u.
\]
Since $S\subset B_r$, we have $|v-u|\leq |v|+r$, and
\[
\int_{\R^d} \chi_S(u) |v-u|^{\gamma+2s} \dd u = \int_S |v-u|^{\gamma+2s} \dd u \leq (|v|+r)^{\gamma+2s} |S| \geq \mu (|v|+r)^{\gamma+2s},
\]
since $\gamma+2s\leq 0$. The conclusion of the lemma follows.
\end{proof}

The following upper bound for $Q_2(f,g)$ will be needed in the proof of the $L^\infty$ bound, Proposition \ref{p:Linfty}. The key point is that using the $L^2$ estimate for $f$ (rather than the $L^1$ estimate provided by conservation of mass) leads to a less severe dependence on $\|f\|_{L^\infty}$.
\begin{lemma}\label{l:Q2est}
If $f\in L^2(\R^d)$ satisfies the bounds $\int_{\R^d} f\dd v \leq M_0$ and $\int_{\R^d} |v|^2 f \dd v \leq E_0$, then for any $\sigma$ satisfying
\[ 
\left(-\frac{2\gamma} d - 1\right)_+ < \sigma \leq 1, 
\]
there holds
\[
Q_2(f,g)(v) \leq C g(v) \langle v\rangle^{\max\{\gamma, -2 -4\gamma/(d(1+\sigma))\}} \left(1+\|f\|_{L^\infty(\R^d)}^{-2\sigma \gamma/(d(1+\sigma))}\right),
\]
where the constant $C$ depends on $c_{d,\gamma,s}$, $\sigma$, $M_0$, $E_0$, and $\|f\|_{L^2(\R^d)}$. 
\end{lemma}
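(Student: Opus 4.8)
The plan is to bound $Q_2(f,g)(v) = c_{d,\gamma,s}\, c_R\, g(v) \,[f\ast|\cdot|^\gamma](v)$, so the entire task reduces to a pointwise estimate on the convolution $[f\ast|\cdot|^\gamma](v)$ in terms of $\|f\|_{L^\infty}$, $\|f\|_{L^2}$, and the moment bounds. First I would split the $v_*$-integral defining $[f\ast|\cdot|^\gamma](v)$ into a near-singularity piece over $B_\rho(v)$ and a far piece over $\R^d\setminus B_\rho(v)$, where $\rho = \rho(v)>0$ is a cutoff radius to be optimized at the end. On the singular piece, since $\gamma > -d$, I estimate $\int_{B_\rho(v)} f(v_*)|v-v_*|^\gamma \dd v_* \leq \|f\|_{L^\infty}\int_{B_\rho} |z|^\gamma \dd z \lesssim \|f\|_{L^\infty}\rho^{d+\gamma}$. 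On the far piece, $|v-v_*|^\gamma \leq \rho^\gamma$ is bounded, but $\rho^\gamma$ blows up as $\rho\to 0$ (recall $\gamma<0$), so instead I interpolate: write $|v-v_*|^\gamma = |v-v_*|^{\gamma}$ and use Hölder with the $L^2$ bound on $f$, getting $\int_{\R^d\setminus B_\rho(v)} f(v_*)|v-v_*|^\gamma \dd v_* \leq \|f\|_{L^2}\big(\int_{\R^d\setminus B_\rho(v)} |z|^{2\gamma}\dd z\big)^{1/2}$, which converges precisely when $2\gamma < -d$, i.e. $\gamma < -d/2$, and equals $C\|f\|_{L^2}\rho^{\gamma+d/2}$.

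When $\gamma \geq -d/2$ the $L^2$ Hölder bound on the far piece diverges, so there I instead use the energy: on $\R^d\setminus B_\rho(v)$ split again according to whether $|v-v_*| \leq \langle v\rangle/2$ or not. Actually the cleaner route, and the one matching the stated exponents, is: bound the far contribution by the standard convolution estimate $[f\ast|\cdot|^\gamma](v) \lesssim \langle v\rangle^\gamma$ using only mass and energy whenever we are far from $v$ at scale $\gtrsim \langle v\rangle$, and reserve the $\rho$-cutoff only for the genuinely singular region at scale $\rho \lesssim \langle v\rangle$. Concretely I would prove the two-sided split $[f\ast|\cdot|^\gamma](v) \leq \int_{B_{\langle v\rangle/2}(v)} f|v-v_*|^\gamma + \int_{\{|v-v_*|\geq \langle v\rangle/2\}} f|v-v_*|^\gamma$, bound the second integral by $C\langle v\rangle^\gamma(M_0 + E_0)$ (splitting once more at $|v-v_*|=1$ if $\gamma>-2$ is allowed, but here we may have $\gamma \geq -d$ so one must be slightly careful — use $|v-v_*|^\gamma \leq C\langle v\rangle^\gamma$ when $|v-v_*| \approx \langle v\rangle$, and $|v-v_*|^\gamma \le |v-v_*|^{\gamma}$ with the energy soaking up large $|v_*|$ via $|v-v_*|\gtrsim |v_*|$), and then treat the first integral with the $\rho$-splitting above, $\rho \leq \langle v\rangle/2$.

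The near-$v$ integral over $B_{\langle v\rangle/2}(v)$ is then split at radius $\rho$: the part inside $B_\rho(v)$ gives $\lesssim \|f\|_{L^\infty}\rho^{d+\gamma}$, and the annulus $B_{\langle v\rangle/2}(v)\setminus B_\rho(v)$ gives $\lesssim \|f\|_{L^2}\big(\int_\rho^{\langle v\rangle/2} r^{2\gamma+d-1}\dd r\big)^{1/2}$. When $2\gamma + d > 0$ this annular integral is dominated by its outer endpoint, $\lesssim \|f\|_{L^2}\langle v\rangle^{\gamma+d/2}$, which combined with the far bound is already $\lesssim \langle v\rangle^{\max\{\gamma,\,\gamma+d/2\}}$... this is not quite the stated exponent, which signals that the right move is: when $2\gamma+d>0$ the $L^\infty$ term is unnecessary and the $L^2$ bound on $B_{\langle v\rangle/2}(v)$ directly gives $\lesssim \|f\|_{L^2}\langle v\rangle^{\gamma+d/2}$ with no cutoff. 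So the cutoff optimization is genuinely needed only in the regime where the $L^\infty$-versus-$L^2$ tradeoff is active, and balancing $\|f\|_{L^\infty}\rho^{d+\gamma}$ against $\|f\|_{L^2}\rho^{\gamma+d/2}$ (when $2\gamma+d<0$, outer endpoint dominates so one balances against $\|f\|_{L^2}\langle v\rangle^{\gamma+d/2}$ instead, hmm). Let me restate the decisive step cleanly: I optimize $\rho$ in $\|f\|_{L^\infty}\rho^{d+\gamma} + \|f\|_{L^2}\rho^{\gamma+d/2}\cdot(\text{endpoint factor})$; choosing $\rho = \min\{\langle v\rangle/2,\ (\|f\|_{L^2}/\|f\|_{L^\infty})^{\theta}\langle v\rangle^{\kappa}\}$ with $\theta,\kappa$ determined by matching the powers, one arrives after arithmetic at the exponent $\max\{\gamma,\ -2 - 4\gamma/(d(1+\sigma))\}$ on $\langle v\rangle$ and the power $-2\sigma\gamma/(d(1+\sigma))$ on $\|f\|_{L^\infty}$, where the parameter $\sigma\in\big((-2\gamma/d - 1)_+,1\big]$ encodes how the two bounds are interpolated — the lower constraint on $\sigma$ being exactly what makes the relevant exponents have the right sign for the integrals to converge.

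The main obstacle I expect is bookkeeping the several parameter regimes (the sign of $2\gamma+d$, the sign of $\gamma+2$, whether $\rho$ saturates at $\langle v\rangle/2$ or at the interpolated value) and checking that in every case the $\rho$-optimization produces \emph{exactly} the claimed exponents, and that the constraint $\sigma > (-2\gamma/d-1)_+$ is precisely the condition for all exponents to be admissible; getting the far-field term to come out as $\langle v\rangle^\gamma$ (rather than something worse) when $\gamma$ is close to $-d$ also requires a careful use of the energy bound since then $|v-v_*|^{2\gamma}$ is not locally integrable. Once the pointwise bound on $[f\ast|\cdot|^\gamma](v)$ is in hand, multiplying by $c_{d,\gamma,s}c_R g(v)$ finishes the proof.
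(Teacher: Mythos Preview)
Your overall three-zone architecture (innermost ball of radius $\rho$, middle annulus out to $|v|/2$, and far region) matches the paper's, but the paper's proof rests on two ingredients you have not identified, and without them the exponents will not come out.

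First, on the innermost ball $B_\rho(v)$ the paper does \emph{not} use the crude bound $\|f\|_{L^\infty}\rho^{d+\gamma}$. Instead it writes $f \le \|f\|_{L^\infty}^\sigma f^{1-\sigma}$ and applies H\"older with exponents $\frac{2}{1-\sigma},\frac{2}{1+\sigma}$:
\[
\int_{B_\rho}|w|^\gamma f(v-w)\dd w \;\le\; \|f\|_{L^\infty}^\sigma \|f\|_{L^2}^{1-\sigma}\Big(\int_{B_\rho}|w|^{2\gamma/(1+\sigma)}\dd w\Big)^{(1+\sigma)/2}
\;\lesssim\; \|f\|_{L^\infty}^\sigma\,\rho^{\gamma+d(1+\sigma)/2}.
\]
This is precisely where the parameter $\sigma$ enters the proof, and the integrability condition $2\gamma/(1+\sigma)>-d$ is exactly the stated hypothesis $\sigma>(-2\gamma/d-1)_+$. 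In your sketch $\sigma$ appears only as an unexplained interpolation parameter at the end; that hand-wave hides the main idea.

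Second, on the middle annulus $B_{|v|/2}\setminus B_\rho$ the paper uses the \emph{energy} bound, not the $L^2$ bound. Since $|v-w|\gtrsim|v|$ there, one gets
\[
\int_{B_{|v|/2}\setminus B_\rho}|w|^\gamma f(v-w)\dd w \;\le\; \rho^\gamma |v|^{-2}\int |v-w|^2 f(v-w)\dd w \;\le\; \rho^\gamma |v|^{-2}E_0.
\]
This $|v|^{-2}$ factor is what makes the final exponent on $\langle v\rangle$ come out as $-2-4\gamma/(d(1+\sigma))$. Your $L^2$-H\"older bound on the annulus yields no $|v|$-dependence at all (you noticed this: your optimal $\rho$ was $(\|f\|_{L^2}/\|f\|_{L^\infty})^{2/d}$, independent of $v$), so you cannot reach the stated bound that way. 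Balancing the two displayed estimates gives $\rho=|v|^{-4/(d(1+\sigma))}\|f\|_{L^\infty}^{-2\sigma/(d(1+\sigma))}$, and both pieces then equal $|v|^{-2-4\gamma/(d(1+\sigma))}\|f\|_{L^\infty}^{-2\sigma\gamma/(d(1+\sigma))}$. The far region $|w|\ge|v|/2$ contributes $M_0|v|^\gamma$ via the mass bound, which is the other branch of the maximum.
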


\begin{proof}
Recall that $Q_2(f,g)(v) \approx g(v)[f\ast |\cdot|^\gamma](v)$. 

First, assume $|v|\geq 1$. Letting 
\[
 \rho = |v|^{-\frac 4 {d(1+\sigma)} }\|f\|_{L^\infty}^{-\frac{2\sigma}{d(1+\sigma)}},
\] 
we divide the integral defining $[f\ast |\cdot|^\gamma](v)$ into 
\[
\begin{split}
I_1 = \int_{B_\rho}|w|^\gamma f(v-w) \dd w, \quad I_2 = \int_{B_{|v|/2}\setminus B_\rho} |w|^\gamma f(v-w)\dd w, \quad I_3 = \int_{\R^d \setminus B_{|v|/2}} |w|^\gamma f(v-w) \dd w.
\end{split}
\]
For $I_1$, we use both the $L^2$ and $L^\infty$ bounds for $f$:
\[
\begin{split}
I_1 &\leq \|f\|_{L^\infty}^\sigma \int_{B_\rho} |w|^\gamma f^{1-\sigma}(v-w) \dd w\\
 &\leq  \|f\|_{L^\infty}^\sigma \|f\|_{L^2}^{1-\sigma} \left( \int_{B_\rho} |w|^{2\gamma/(1+\sigma)} \dd w\right)^{(1+\sigma)/2}\\
&\lesssim \rho^{\gamma + d(1+\sigma)/2} \|f\|_{L^\infty}^\sigma \|f\|_{L^2}^{1-\sigma}\\
&\lesssim |v|^{-2 - 4\gamma/(d(1+\sigma))} \|f\|_{L^\infty}^{-2\sigma\gamma/(d(1+\sigma))}.
\end{split}
\]
The condition $\sigma> -\frac {2\gamma} d - 1$ ensures that $|w|^{2\gamma/(1+\sigma)}$ is integrable near $w=0$. 

For $I_2$, we use the energy bound. Since $|v-w|\gtrsim |v|$ when $|w|\leq |v|/2$, 
\[
\begin{split}
I_2 \leq \rho^\gamma |v|^{-2} \int_{B_{|v|/2} \setminus B_\rho} |v-w|^2 f(v-w) \dd w \leq \rho^\gamma |v|^{-2} E_0 \lesssim |v|^{-2-4\gamma/(d(1+\sigma))} \|f\|_{L^\infty}^{-2\sigma\gamma/(d(1+\sigma))}.
\end{split}
\]
Finally, for $I_3$, we use the bound on the mass:
\[
I_3 \lesssim |v|^\gamma \int_{\R^d\setminus B_{|v|/2}} f(v-w) \dd w \leq M_0 |v|^\gamma.
\]

In the case $|v|< 1$, we choose $\rho' = \|f\|_{L^\infty}^{-2\sigma/(d(1+\sigma))}$, and proceed as in the estimate of $I_1$ to write
\[
\int_{B_{\rho'}} |w|^\gamma f(v-w) \dd w \lesssim (\rho')^{\gamma+d(1+\sigma)/2} \|f\|_{L^\infty}^\sigma \lesssim \|f\|_{L^\infty}^{-2\sigma \gamma/(d(1+\sigma))},
\]
as well as 
\[
\int_{\R^d \setminus B_{\rho'}} |w|^\gamma f(v-w) \dd w \lesssim (\rho')^\gamma M_0 \lesssim \|f\|_{L^\infty}^{-2\sigma \gamma/(d(1+\sigma))},
\]
as desired.
\end{proof}

Next, we prove a global upper bound that is valid away from $t=0$. We assume $f$ is a classical solution that is smooth enough that conservation of mass holds, by the argument in Section \ref{s:conservation}. 

\begin{proposition}\label{p:Linfty}
Assume 
\begin{equation}\label{e:gamma-s}
\begin{cases} -\dfrac d 2 +s < \gamma +2s < 0, &\text{if } -\dfrac d 2 +s \geq -\dfrac {4s} d,\\[7pt]
-\dfrac {4s} d \leq \gamma + 2s < 0, &\text{if } -\dfrac d 2 + s< -\dfrac{4s} d.
\end{cases}
\end{equation}
For any bounded classical solution $f$ of \eqref{e:homogeneous} on $[0,T]\times \R^d$ satisfying
\[
\int_{\R^d} f(t,v) \dd v = M_0, \quad \int_{\R^d} |v|^2 f(t,v) \dd v \leq E_0, \quad \|f\|_{L^2(\R^d)} \leq L_0,
\]
the following upper bound holds:
\[
f(t,v) \leq N \left( t^{-d/(2s)} + 1\right),
\]
for some $N>0$ depending only on $d$, $\gamma$, $s$, $M_0, E_0$, and $L_0$. 
\end{proposition}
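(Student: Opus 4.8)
The plan is to prove the $L^\infty$ bound via a barrier argument, comparing $f$ against a time-dependent multiple of a suitable stationary profile. First I would fix a profile $\mu(v) = \langle v\rangle^{-k}$ for $k$ large (say $k > d+\gamma+2s$ so that all convolutions below converge), and look for a supersolution of the form
\[
\bar f(t,v) = N\left(t^{-d/(2s)} + 1\right)\mu(v) =: A(t)\mu(v),
\]
possibly after rescaling $\mu$ so that $\mu(0)$ is a convenient constant. The idea is standard: at $t=0$, $\bar f = +\infty$ dominates $f$; if we can show $\bar f$ is a supersolution of \eqref{e:homogeneous} in the viscosity/classical sense wherever $f = \bar f$ first touches from below, then the comparison principle forces $f \le \bar f$ for all $t \in [0,T]$, which is exactly the claimed bound once the constants $N$ and $k$ are absorbed. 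The point of the exponent $-d/(2s)$ is dimensional: it is the self-similar decay rate of the fractional heat kernel generated by the $(-\Delta)^s$ term with the coercive coefficient from Lemma \ref{l:coercive}.

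The heart of the matter is the differential inequality. Suppose at some $(t_0,v_0)$ we have $f(t_0,v_0) = \bar f(t_0,v_0)$ and $f \le \bar f$ on $[0,t_0]\times\R^d$. Then $\partial_t(\bar f - f) \le 0$ at $(t_0,v_0)$, $\nabla_v f = \nabla_v \bar f$, and for the nonlocal term $(-\Delta)^s f(t_0,v_0) \ge (-\Delta)^s \bar f(t_0,v_0)$ (since $f-\bar f$ has an interior max of value $0$ there). Writing $Q$ in the Carleman form \eqref{e:carleman}, $Q(f,f) = c_1[f\ast|\cdot|^{\gamma+2s}](-\Delta)^s f + c_2[f\ast|\cdot|^\gamma] f$, and using the coercivity $[f\ast|\cdot|^{\gamma+2s}](v)\gtrsim \langle v\rangle^{\gamma+2s}$ from Lemma \ref{l:coercive} (applicable since the entropy is controlled, being nonincreasing), one gets at the contact point, schematically,
\[
0 \ge \partial_t(\bar f - f) \ge A'(t_0)\mu(v_0) - c_1[f\ast|\cdot|^{\gamma+2s}](-\Delta)^s\bar f - c_2 Q_2\text{-type term}.
\]
One then needs: (i) a good \emph{lower} bound on the dissipative contribution $c_1[f\ast|\cdot|^{\gamma+2s}](-\Delta)^s\bar f$, which is where the coercive lower bound $\langle v\rangle^{\gamma+2s}|w|^{-d-2s}$ combines with an explicit lower estimate on $(-\Delta)^s\mu$ — for $\mu = \langle v\rangle^{-k}$ one has $(-\Delta)^s\mu(v) \gtrsim \langle v\rangle^{-k-2s}$ away from the origin and $(-\Delta)^s\mu \gtrsim -C$ near it; and (ii) an \emph{upper} bound on the reaction term $c_2[f\ast|\cdot|^\gamma]\bar f$, for which I would invoke Lemma \ref{l:Q2est} with an optimized choice of $\sigma \in \big((-2\gamma/d - 1)_+, 1\big]$, yielding a bound of the form $C\bar f(v_0)\langle v_0\rangle^{\max\{\gamma,\, -2-4\gamma/(d(1+\sigma))\}}(1 + \|f(t_0)\|_{L^\infty}^{-2\sigma\gamma/(d(1+\sigma))})$. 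The crucial structural feature is that Lemma \ref{l:Q2est} makes the reaction term grow only like a \emph{fractional} power of $\|f\|_{L^\infty} = A(t_0)\mu(0) \sim A(t_0)$, with exponent $-2\sigma\gamma/(d(1+\sigma)) < 1$ for an appropriate range of parameters, whereas the dissipation contributes $-c\langle v_0\rangle^{\gamma+2s-k-2s}\cdot A(t_0) = -c A(t_0)\langle v_0\rangle^{\gamma-k}$, which is linear in $A(t_0)$ — so for $A(t_0)$ large the dissipation beats the reaction. Balancing these against $A'(t_0)\mu(v_0) \sim -\tfrac{d}{2s}N t_0^{-d/(2s)-1}\langle v_0\rangle^{-k}$ pins down the required choice of $N$ and reproduces the $t^{-d/(2s)}$ rate.

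The main obstacle I anticipate is making steps (i)--(ii) quantitatively compatible across the whole velocity range, i.e. showing that for \emph{every} $v_0\in\R^d$ the dissipative gain dominates, and that this is possible \emph{precisely} under hypothesis \eqref{e:gamma-s}. Near $v_0 = 0$ the fractional Laplacian of the barrier can have the wrong sign, so one must ensure the reaction term and the $A'$ term are harmless there — this typically forces the additive "$+1$" in $t^{-d/(2s)}+1$ and a careful treatment of the first contact time (it cannot occur at $t_0 = 0$ since $\bar f\to\infty$; and for $t_0 > 0$ all quantities are finite). For large $|v_0|$ one needs $\gamma - k < -k - 2s + (\gamma+2s)$, i.e. the dissipation decays no faster than competing terms, which dictates the admissible window for $\gamma + 2s$; the two cases in \eqref{e:gamma-s} should correspond to whether the binding constraint comes from the energy-type estimate in Lemma \ref{l:Q2est} (the $-4s/d$ threshold, via the choice $\sigma = 1$) or from a direct moment bound (the $-d/2 + s$ threshold). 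I would also need to verify the comparison principle itself is legitimate: since $f$ is assumed a bounded classical solution and $\bar f$ is smooth with the strict supersolution property, a standard touching-point argument applies, but one should localize in $v$ (using that $\bar f$ decays and $f$ is bounded, the contact point stays in a compact set for each fixed $t_0$) to rule out loss of the maximum at spatial infinity. Finally, one must double-check that the coercivity Lemma \ref{l:coercive} is applicable: it requires a lower bound $m_0$ on the mass and an upper bound $H_0$ on the entropy, both of which hold here since mass is conserved and entropy is nonincreasing for solutions of \eqref{e:homogeneous}.
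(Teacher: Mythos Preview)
Your barrier approach has a genuine gap in the dissipation estimate. By passing from $Q_1(f,f)$ to $Q_1(f,\bar f)$ at the contact point and then estimating $(-\Delta)^s\mu$, you obtain a negative term of size at most $-c\,A(t_0)\langle v_0\rangle^{\gamma-k}$, which is \emph{linear} in $A(t_0)$. But the time-derivative term you must beat satisfies $|A'(t_0)\mu(v_0)| \sim N^{-2s/d} A(t_0)^{1+2s/d}\langle v_0\rangle^{-k}$, which is \emph{superlinear} in $A$. For small $t_0$ (equivalently large $A(t_0)$) your dissipation cannot produce the required contradiction, regardless of how the reaction term is handled. In addition, the lower bound $(-\Delta)^s\langle v\rangle^{-k} \gtrsim \langle v\rangle^{-k-2s}$ you invoke is not established anywhere in the paper (Lemma~\ref{l:q-power} gives only the upper bound) and is not obviously true for general $k$.

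The paper avoids both problems by using a \emph{constant-in-$v$} barrier $h(t)=N(t^{-d/(2s)}+1)$ and bounding $Q_1(f,f)$ directly at the contact point rather than comparing to $Q_1(f,\bar f)$. The missing idea is a Chebyshev argument: since $\int f \leq M_0$ (or $\int |v|^2 f \leq E_0$ for large $|v_0|$), the set $\{w: f(v_0+w)\geq h(t_0)/2\}$ has measure $\lesssim M_0/h(t_0)$, so one can find an annulus $B_{2r}\setminus B_r$ with $r\sim h(t_0)^{-1/d}$ on most of which $f(v_0+w)<h(t_0)/2$. Combined with the coercive lower bound on $K_f$, this yields $Q_1(f,f)(t_0,v_0)\lesssim -c_0\,h(t_0)^{1+2s/d}\langle v_0\rangle^{\gamma+2s+4s/d}$, the crucial superlinear dissipation. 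It is then the competition between this exponent $1+2s/d$ and the exponent $1-2\sigma\gamma/(d(1+\sigma))$ coming from Lemma~\ref{l:Q2est} (and between the corresponding $\langle v_0\rangle$-powers) that forces the precise choice of $\sigma$ and produces the two branches of hypothesis~\eqref{e:gamma-s}.
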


Note that the first condition $\gamma + 2s > -\frac d 2 + s$ in \eqref{e:gamma-s} follows from our condition $\gamma \geq -\frac{d+4s}3$ in Theorem \ref{t:main}. 

\begin{proof}
First, note that the bound for $f$ in $L^2(\R^d)$ implies an upper bound on the entropy $\int_{\R^d} f \log f \dd v$, which will allow us to apply Lemma \ref{l:coercive}. 

Following \cite{silvestre2016boltzmann}, let
\[
h(t) = N \left( t^{-d/(2s)}+ 1 \right),
\]
with $N\geq 1$ to be chosen later. We claim that $f(t,v) < h(t)$ for all $(t,v) \in [0,T]\times \R^d$. Since $f$ is bounded, the inequality $f< h$ must be true on some small time interval near zero, so if our claim is false, there is a point $(t_0,v_0)$ where $f(t_0,v_0) = h(t_0)$ for the first time. At this point, we must have
\begin{equation}\label{e:dt}
\partial_t f(t_0,v_0) \geq \partial_t h(t_0) = -\frac {dN} {2s} t_0^{-1-d/(2s)}.
\end{equation}
From the equation, we have
\[
\partial_t f(t_0,v_0) = Q_1(f,f)(t_0,v_0) + Q_2(f,f)(t_0,v_0).
\]
We will find an upper bound for $Q_1+Q_2$ at the crossing point $(t_0,v_0)$ that contradicts \eqref{e:dt}. 

First, we consider the case where $|v_0|$ is large, i.e. $|v_0|\geq R$ for an $R>1$ that will be chosen below. For the term $Q_1(f,f)$, we note that $f(t_0,v) \leq h(t_0)$ for all $v\in \R^d$, which implies
\begin{equation}\label{e:Q1est}
\begin{split}
Q_1(f,f)(t_0,v_0) &\leq - \int_{\R^d} K_f(t_0,v_0,w) [h(t_0) - f(t_0,v_0+w)] \dd w\\
&\leq - c_0 | v_0|^{\gamma+2s}\int_{\R^d} |w|^{-d-2s}  [h(t_0) - f(t_0,v_0+w)] \dd w,
\end{split}
\end{equation}
with $c_0$ as in Lemma \ref{l:coercive}. This principal value integral is well-defined because $f(t_0,v_0) = h(t_0)$. To get a good (negative) upper bound for this integral, we apply an argument inspired by \cite[Proposition 3.3] {imbert2018decay}. The idea is to find a set where (i) $f(t_0,v_0+w) < h(t_0)/2$ and (ii) $|w|$ is relatively small, so that $|w|^{-d-2s}$ is large. With $r\in (0, |v_0|/4)$ to be chosen later, Chebyshev's inequality implies
\[
\begin{split}
|\{w: f(v_0+w) \geq h(t_0)/2\} \cap (B_{2r} \setminus B_r)| &\leq \frac 2 {h(t_0)} \int_{B_{2r}\setminus B_r} f(v_0+w) \dd w\\
&\leq \frac 8 {|v_0|^2 h(t_0)} \int_{B_{2r}\setminus B_r} |v_0+w|^2 f(v_0+w) \dd w\\
&\leq \frac {8 E_0}{|v_0|^2 h(t_0)},
\end{split}
\] 
where in the second line, we used that $|v_0+w| \geq |v_0|/2$ when $|w|\leq 2r \leq |v_0|/2$. Letting $c_d$ be the constant such that $|B_{2r}\setminus B_r| = c_d r^d$, we want to choose $r$ such that
\[
4 |\{w: f(v_0+w) \geq h(t_0)/2\} \cap (B_{2r} \setminus B_r)| \leq \frac {32 E_0}{|v_0|^2 h(t_0)} \leq |B_{2r}\setminus B_r| = c_d r^d.
\]
This will imply that $f(v_0+w) < h(t_0)/2$ in at least three fourths of the set $B_{2r}\setminus B_r$. The appropriate choice of $r$ is given by
\[
r = \left( \frac{32 E_0}{c_d |v_0|^2 h(t_0)}\right)^{1/d}.
\]
We also choose
\[
R = 1+ 4^{d/(d+2)} \left( \frac {32 E_0}{c_d}\right)^{1/(d+2)},
\]
so that $|v_0|\geq R$ implies $|v_0|^{d+2} \geq 4^d \cdot 32 E_0/(c_d h(t_0))$ (recall that $h(t_0)\geq 1$), which implies $r< |v_0|/4$. 

Returning to \eqref{e:Q1est}, since the integrand $h(t_0) - f(t_0,v_0+w)$ is non-negative everywhere, we can write
\[
\begin{split}
Q_1(f,f)(t_0,v_0) &\leq -c_0 | v_0|^{\gamma+2s} \frac {h(t_0)} 2 \int_{(B_{2r}\setminus B_r)\cap \{f(v_0+w) < h(t_0)/2\}} |w|^{-d-2s}\dd w \\
&\leq  -c_0 | v_0|^{\gamma+2s} \frac {h(t_0)} 2 r^{-d-2s} |(B_{2r}\setminus B_r)\cap \{f(v_0+w) < h(t_0)/2\}|\\
&\leq -c_0 | v_0|^{\gamma+2s} \frac {h(t_0)} 2 r^{-d-2s} \frac 3 4 c_d r^d\\
&\leq -c_0 | v_0|^{\gamma+2s+4s/d} h(t_0)^{1+2s/d},
\end{split}
\]
by our choice of $r$. The value of $c_0$ has changed line by line, but still depends only on $d$, $\gamma$, $s$, $M_0$, $E_0$, and $L_0$. 

For the term $Q_2(f,f) \approx f[f\ast|\cdot|^\gamma]$, Lemma \ref{l:Q2est} implies, for some $\sigma$ to be chosen below, 
\[
Q_2(f,f)(t_0,v_0) \leq C h(t_0)^{1-2\sigma\gamma/(d(1+\sigma))} |v_0|^{\max\{\gamma, - 2 - 4\gamma/(d(1+\sigma))\}},
\]
since $\|f(t_0,\cdot)\|_{L^\infty(\R^d)} \leq h(t_0)$. We want to choose $\sigma$ so that the positive term coming from $Q_2$ is smaller than the negative term from $Q_1$. Since $h(t_0) \geq 1$, this means we need
\[
\gamma+2s+\frac{4s}{d} > -2 -\frac{4\gamma}{d(1+\sigma)} \quad \text{and} \quad 1+\frac{2s}d > 1 - \frac{2\sigma \gamma}{d(1+\sigma)}.
\]
After some straightforward algebra using $\gamma+2s+2> 0$ (which follows from \eqref{e:gamma-s}), this translates to 
\begin{equation}\label{e:sigma1}
\frac s {-s-\gamma} < \sigma < \frac{-4\gamma}{4s+d(\gamma+2s+d)} - 1.
\end{equation}
A number $\sigma$ satisfying both these inequalities exists exactly when $\gamma+2s > -2d/(d+4)$, which is true by our assumption \eqref{e:gamma-s}.\footnote{Indeed, as functions of $s$, the lines $-d/2-s$ and $-4s/d$ cross at $s=-d^2/(2(d+4))$ with height exactly $-2d/(d+4)$.} In addition, we need $\sigma$ to satisfy the following condition in order to apply Lemma \ref{l:Q2est}:
\begin{equation}\label{e:sigma2}
\left(-\frac{2\gamma} d - 1\right)_+ < \sigma \leq 1. 
\end{equation}
Since $\gamma + 2s < 0$, we have $\dfrac s {-s-\gamma} < 1$. By our assumption that $\gamma +2s > -\frac d 2 + s$ (which is part of \eqref{e:gamma-s}), we also have $(-2\gamma/d - 1)_+ < \dfrac  s {-s-\gamma}$, so there is always a $\sigma$ satisfying both \eqref{e:sigma1} and \eqref{e:sigma2}.

With such a choice of $\sigma$, we combine our estimates of $Q_1(f,f)$ and $Q_2(f,f)$ and use $h(t_0) = N( t_0^{-d/(2s)}+1)$ to obtain
\[
Q(f,f)(t_0,v_0) \leq -\frac 1 2 c_0 N^{1+2s/d}\left( t_0^{-d/(2s)}+1\right)^{1+2s/d} | v_0|^{\gamma+2s+4s/d},
\]
if $N$ is chosen sufficiently large. This estimate holds whenever $|v_0| \geq R$.

When $|v_0| \leq R$, we obtain a negative upper bound for $Q_1(f,f)$ using the mass bound rather than the energy: for some $\rho>0$ to be determined, Chebyshev implies
\[
| \{ w: f(v_0+ w ) \geq h(t_0)/2\} \cap (B_{2\rho}\setminus B_\rho)| \leq \frac 2 {h(t_0)} \int_{B_{2\rho}\setminus B_\rho} f(v_0+w) \dd w \leq \frac {2 M_0}{h(t_0)}.
\]
With the choice $\rho = (8M_0/(c_d h(t_0)))^{1/d}$, with $c_d$ as above, we have
\[
4| \{ w: f(v_0+ w ) \geq h(t_0)/2\} \cap (B_{2\rho}\setminus B_\rho)| \leq \frac{8 M_0}{h(t_0)} \leq c_d \rho^d = |B_{2\rho} \setminus B_\rho|.
\]
Proceeding as above, we obtain
\[
\begin{split}
Q_1(f,f)(t_0,v_0) &\leq - c_0\langle R\rangle^{\gamma+2s} \frac {h(t_0)} 2 \int_{(B_{2\rho}\setminus B_\rho) \cap \{f(v_0+w) < h(t_0)/2\}} |w|^{-d-2s} \dd w\\
&\leq - c_0 \langle R\rangle^{\gamma+2s} \frac{ h(t_0)} 2 \rho^{-d-2s} \frac 3 4 c_d \rho^d\\
&\leq - c_0 h(t_0)^{1+2s/d}.
\end{split}
\]
Combined with Lemma \ref{l:Q2est}, we have
\[
Q(f,f)(t_0,v_0) \leq -c_0 h(t_0)^{1+2s/d} + C h(t_0)^{1-2\sigma \gamma/(d(1+\sigma))},
\]
for the same value of $\sigma$ chosen above. In particular, the exponent $1-2\sigma \gamma/(d(1+\sigma))$ is strictly less than $1+2s/d$, so that the negative term dominates for $N$ (and therefore $h(t_0)$) sufficiently large. We have shown, in both cases $|v_0|\geq R$ and $|v_0|< R$, 
\[
Q(f,f)(t_0,v_0) \leq -\frac 1 2 c_0 N^{1+2s/d}\left( t_0^{-d/(2s)}+1\right)^{1+2s/d} \langle v_0\rangle^{\gamma+2s+4s/d}.
\]
Combining this with \eqref{e:dt}, we have
\[
- \frac {dN}{2s} t_0^{-1-d/(2s)} \leq -\frac 1 2 c_0 N^{1+2s/d} \left( t_0^{-d/(2s)}+1\right)^{1+2s/d} \langle v_0\rangle^{\gamma+2s+4s/d},
\]
which is a contradiction for $N$ large enough, since $\gamma+2s+4s/d\geq 0$ by assumption \eqref{e:gamma-s}.
\end{proof}

Next, we prove polynomial decay estimates for $f$. In our proof of global existence, we will work with local solutions that satisfy a qualitative assumption of rapid decay on some time interval $[0,T_*)$, but the following proposition is necessary to ensure the decay estimates for $f$ do not degenerate as $t\to T_*$. 

This proposition is rather flexible in terms of the allowable values of $\gamma$ and $s$, but in order to be useful, it requires $f$ to already satisfy a quantitative $L^\infty$ bound.

\begin{proposition}\label{p:decay}
For any $d\geq 2$ and $(\gamma, s) \in (-d,0)\times(0,1)$ such that $\gamma+2s<0$, let $f:[0,T]\times\R^d\to [0,\infty)$ be a bounded solution of \eqref{e:homogeneous}. Assume in addition that $f_{\rm in} \in L^\infty_q(\R^d)$ for some $q>0$, and that the solution $f\in L^\infty_{q+\eps}([0,T]\times\R^d)$ for some $\eps>0$. Then
\[
\|f(t)\|_{L^\infty_q(\R^d)} \leq \|f_{\rm in}\|_{L^\infty_q(\R^d)} \exp( C_0 t), \quad t\in [0,T],
\]
where $C_0>0$ depends on $M_0 = \int_{\R^d} f(t,v) \dd v$ and $\|f\|_{L^\infty([0,T]\times\R^d)}$. In particular, $C_0$ does not depend quantitatively on the assumption $f\in L^\infty_{q+\eps}([0,T]\times\R^d)$. 
\end{proposition}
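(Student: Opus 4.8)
The plan is to prove the stated bound by a barrier (comparison) argument in the spirit of Proposition~\ref{p:Linfty}, but now tracking the velocity weight $\langle v\rangle^{-q}$. Fix a constant $C_0>0$ to be chosen at the end, set $m(t)=\|f_{\rm in}\|_{L^\infty_q(\R^d)}\exp(C_0 t)$, and consider the barrier $\bar f(t,v)=m(t)\langle v\rangle^{-q}$. The claim is that $f(t,v)\le\bar f(t,v)$ for all $(t,v)\in[0,T]\times\R^d$, which is exactly the conclusion of the proposition. By a routine approximation (e.g.\ replacing $\|f_{\rm in}\|_{L^\infty_q}$ by $\|f_{\rm in}\|_{L^\infty_q}+\delta$ and letting $\delta\to0$) one may assume strict inequality at $t=0$, so it suffices to rule out a first crossing.

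The qualitative hypothesis $f\in L^\infty_{q+\eps}([0,T]\times\R^d)$ is used only to localize. Since $f(t,v)\le\|f\|_{L^\infty_{q+\eps}}\langle v\rangle^{-q-\eps}$ while $\bar f(t,v)\ge\|f_{\rm in}\|_{L^\infty_q}\langle v\rangle^{-q}$ (using $e^{C_0 t}\ge1$), there is a radius $R_0<\infty$, depending on $\|f\|_{L^\infty_{q+\eps}}$ but harmlessly so, such that $f(t,v)<\bar f(t,v)$ for all $t\in[0,T]$ whenever $|v|\ge R_0$. Hence, if the claim failed, continuity would provide a first time $t_0\in(0,T]$ and a point $v_0$ with $|v_0|\le R_0$ at which $f(t_0,v_0)=\bar f(t_0,v_0)$ and $f(t,v)\le\bar f(t,v)$ for all $t\le t_0$. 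There, $v\mapsto f(t_0,v)-\bar f(t_0,v)$ attains its maximum value $0$, so $\partial_t(f-\bar f)(t_0,v_0)\ge0$, giving
\[
Q(f,f)(t_0,v_0)=\partial_t f(t_0,v_0)\ \ge\ \partial_t\bar f(t_0,v_0)=C_0\,\bar f(t_0,v_0).
\]
The whole point is to contradict this by showing $Q(f,f)(t_0,v_0)\le C\,\bar f(t_0,v_0)$ with $C$ depending only on $d,\gamma,s,M_0$ and $\|f\|_{L^\infty([0,T]\times\R^d)}$ — in particular not on $C_0$ nor on the qualitative bound — and then taking $C_0>C$.

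I would split $Q=Q_1+Q_2$ as in Section~\ref{s:carleman}, all quantities evaluated at $(t_0,v_0)$. For the reaction term $Q_2(f,f)=c_{d,\gamma,s}c_R\,f\,[f\ast|\cdot|^\gamma]$, splitting the convolution at radius $1$ and using $\gamma\in(-d,0)$, $f\le\|f\|_\infty$ and $\int_{\R^d}f=M_0$ yields $[f\ast|\cdot|^\gamma](v_0)\le C(M_0,\|f\|_\infty)$, hence $Q_2(f,f)(t_0,v_0)\le C\,f(t_0,v_0)=C\,\bar f(t_0,v_0)$. For $Q_1(f,f)(v_0)=c_{d,\gamma,s}[f\ast|\cdot|^{\gamma+2s}](v_0)\int_{\R^d}\frac{f(v_0+w)-f(v_0)}{|w|^{d+2s}}\,\dd w$, the crossing property $f(t_0,\cdot)\le\bar f(t_0,\cdot)$ with equality at $v_0$ makes the (principal value) nonlocal integral dominated by the one for $\bar f(t_0,\cdot)$:
\[
\int_{\R^d}\frac{f(v_0+w)-f(v_0)}{|w|^{d+2s}}\,\dd w\ \le\ m(t_0)\int_{\R^d}\frac{\langle v_0+w\rangle^{-q}-\langle v_0\rangle^{-q}}{|w|^{d+2s}}\,\dd w\ \lesssim\ m(t_0)\big(\langle v_0\rangle^{-q-2s}+\langle v_0\rangle^{-d-2s}\big),
\]
the last bound coming from a Taylor expansion of $\langle\cdot\rangle^{-q}$ on $\{|w|\le\langle v_0\rangle/2\}$ (using $|D^2\langle v\rangle^{-q}|\lesssim\langle v\rangle^{-q-2}$) together with a direct estimate of $\int_{|w|\ge\langle v_0\rangle/2}\langle v_0+w\rangle^{-q}|w|^{-d-2s}\,\dd w$, whose contribution from $v_0+w\approx0$ produces the $\langle v_0\rangle^{-d-2s}$ term. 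If this integral is $\le0$ then $Q_1(f,f)(t_0,v_0)\le0$ (as $f\ge0$ forces $[f\ast|\cdot|^{\gamma+2s}](v_0)\ge0$); otherwise I would pair the bound with the \emph{decay} of the convolution weight, $[f\ast|\cdot|^{\gamma+2s}](v_0)\lesssim\langle v_0\rangle^{\gamma+2s}$ for $|v_0|$ large — which itself follows from $f(t_0,\cdot)\le\bar f(t_0,\cdot)$ and $\int_{\R^d}f=M_0$ by splitting the convolution at radius $\langle v_0\rangle/2$ — to get $Q_1(f,f)(t_0,v_0)\lesssim m(t_0)\langle v_0\rangle^{\gamma-d}$ in the large-velocity regime (and $\lesssim m(t_0)$ when $|v_0|$ is bounded), which is $\le C\,\bar f(t_0,v_0)$ with $C$ of the required form.

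Combining the two estimates gives $Q(f,f)(t_0,v_0)\le C\,\bar f(t_0,v_0)$ with $C=C(d,\gamma,s,M_0,\|f\|_\infty)$, contradicting the crossing inequality once $C_0>C$; thus the claim holds and, letting $\delta\to0$, so does the proposition. The main obstacle is exactly the $Q_1$ bound at the crossing point: one must control its positive part by $C\,\bar f(t_0,v_0)$ with a constant that neither grows with $C_0$ nor uses the qualitative decay assumption quantitatively. The delicate mechanism is the nonlocal interaction between the weight profile near $v=0$ and a far-away velocity $v_0$ (responsible for the $\langle v_0\rangle^{-d-2s}$ term), which is what forces one to exploit the \emph{decay} of $[f\ast|\cdot|^{\gamma+2s}]$ rather than merely its boundedness.
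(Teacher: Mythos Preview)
Your overall strategy is the same barrier argument the paper uses: set $h(t,v)=N e^{\kappa t}\langle v\rangle^{-q}$, rule out a first crossing by showing $Q(f,f)(t_0,v_0)\le C\,h(t_0,v_0)$ with $C$ independent of $\kappa$, and then choose $\kappa>C$. The $Q_2$ bound, the use of the qualitative $L^\infty_{q+\eps}$ hypothesis only to force the touching point to be finite, and the final $\delta\to 0$ limit all match the paper.

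Where you diverge from the paper is in the $Q_1$ estimate, and that step contains a circularity. The paper does not split off a $\langle v_0\rangle^{-d-2s}$ contribution; it simply uses $Q(f,f)(t_0,v_0)\le Q(f,h)(t_0,v_0)$ at the crossing point and then bounds $Q(f,h)$ directly via Lemma~\ref{l:q-power}, which gives $(-\Delta)^s\langle v\rangle^{-q}\lesssim\langle v\rangle^{-q-2s}$, together with the uniform convolution estimate $[f\ast|\cdot|^{\gamma+2s}]+[f\ast|\cdot|^{\gamma}]\le C(M_0,\|f\|_{L^\infty})$ from Lemma~\ref{l:convolution}(a). This yields $Q(f,h)(t_0,v_0)\lesssim C(M_0,\|f\|_{L^\infty})\,h(t_0,v_0)$ in one line, with no reference to $m(t_0)$ beyond the common factor $h$. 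By contrast, your claimed decay $[f\ast|\cdot|^{\gamma+2s}](v_0)\lesssim\langle v_0\rangle^{\gamma+2s}$ does \emph{not} follow from $M_0$ and $\|f\|_{L^\infty}$ alone (those only give a uniform bound, not decay), and if you derive it from $f(t_0,\cdot)\le \bar f(t_0,\cdot)$ the implicit constant picks up a factor $m(t_0)=\|f_{\rm in}\|_{L^\infty_q}e^{C_0 t_0}$. That makes the final inequality $Q(f,f)(t_0,v_0)\le C\,\bar f(t_0,v_0)$ depend on $C_0$, which is exactly what you said must be avoided; you then cannot close by choosing $C_0>C$.

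In short: drop the attempt to extract decay from the convolution and instead quote Lemma~\ref{l:q-power} for the weight and Lemma~\ref{l:convolution}(a) for the convolutions, as the paper does.
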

\begin{proof}
We use a barrier argument of a somewhat different style than the proof of Proposition \ref{p:Linfty}. Define
\[
h(t,v) = N e^{\kappa t} \vv^{-q}.
\]
with $N =  (1+\nu)\|f_{\rm in}\|_{L^\infty_q(\R^d)}$ with $\nu>0$ small, and $\kappa>0$ to be chosen below. Clearly, the inequality $f<h$ holds at $t=0$, so as in the proof of Proposition \ref{p:Linfty}, we assume by contradiction that $f$ and $h$ cross for the first time at some location $(t_0,v_0)$. We must have $t_0>0$ because of our assumption that $\vv^{q+\eps} f \in L^\infty([0,T]\times\R^d)$. 

At the crossing point, we have 
\begin{equation}\label{e:kappaN}
\kappa N e^{\kappa t_0} \langle v_0\rangle^{-q} = \partial_t h \leq \partial_t f = Q(f,f) \leq Q(f,h),
\end{equation}
where the last equality followed from the fact that $f(t_0,v_0) = h(t_0,v_0)$ and $f(t_0,v) \leq h(t_0,v)$ for all $v\in \R^d$. Since $(-\Delta)^s \vv^{-q} \lesssim  \vv^{-q-2s}$ (see Lemma \ref{l:q-power}), we have
\[
\begin{split}
Q(f,h)(t_0,v_0) &\lesssim N e^{\kappa t} \left([f\ast |\cdot|^{\gamma+2s}](v_0) \langle v_0\rangle^{-q-2s} + [f\ast |\cdot|^\gamma](v_0) \langle v_0\rangle^{-q}\right)\\
&\leq C_0 N  \langle v_0\rangle^{-q},
\end{split}
\]
where we have bounded $[f\ast |\cdot|^{\gamma+2s}]$ and $[f\ast |\cdot|^\gamma]$ in terms of $\|f\|_{L^\infty([0,T]\times\R^d)}$ and $M_0$ by a standard convolution estimate.  
This inequality is a contradiction with \eqref{e:kappaN} if we choose $\kappa \gtrsim C_0$. We conclude 
\[
f(t,v) < (1+\nu) \|f_{\rm in}\|_{L^\infty_q(\R^d)} e^{\kappa t}\vv^{-q}, \quad t\in [0,T],
\]
which implies the conclusion of the proposition, after sending $\nu\to 0$. 
\end{proof}

\section{Proof of global existence}\label{s:global}

This section is devoted to the proof of our main theorem. We start with a local solution on a time interval $[0,T]$, which is constructed in Theorem \ref{t:short-time-existence} (whose proof is postponed to Section \ref{s:short}). To show the solution can be extended past any given time, we need to show that the $H^{d+3}_q$ norm of $f$ remains finite, so that Theorem \ref{t:short-time-existence} can be reapplied. Therefore, one needs some regularization argument to bound $H^{d+3}_q$ norms in terms of the quantities (mass, energy, $L^\infty$ norm, etc.) that are globally bounded by our estimates from Sections \ref{s:L2} and \ref{s:Linfty}.

One option is to bootstrap regularity estimates in H\"older spaces (i.e. De Giorgi and Schauder estimates for parabolic integro-differential equations). This approach is based on the uniform ellipticity of the kernel $K_f(t,v,w)$: from Lemma \ref{l:coercive} and Proposition \ref{p:Linfty}, one has
\begin{equation*}
 c \vv^{\gamma+2s} |w|^{-d-2s} \leq K_f(t,v,w) \leq C \vv^{\gamma+2s} |w|^{-d-2s},
 \end{equation*}
Because the lower ellipticity bounds for $K_f(t,v,w)$ degenerate as $|v|\to \infty$, the regularity gained at each step is not uniform in $v$. To get around this, one would localize around a point $(t_0,v_0)$ and rescale the equation to obtain a new kernel with an ellipticity bound that holds locally uniformly.  A similar issue occurs for the true Boltzmann equation, but in a more challenging way, because the ellipticity degenerates at different rates depending on the direction of $w$ relative to $v$. The authors of \cite{imbert2020smooth} developed a change of variables to solve this issue, and iteratively applied H\"older estimates to obtain $C^\infty$ regularity in the inhomogeneous case.


A version of the approach in \cite{imbert2020smooth} would work for us, but because we are in the spatially homogeneous setting, there is a more self-contained proof that uses $L^2$-based energy estimates. The regularization is provided by the following coercivity property of the $Q_1$ term:

\begin{lemma}\label{l:good-sign}
For $f\in L^\infty_q(\R^d)$ with $q> \gamma+2s+d$, and $h\in L^2(\R^d)$, there holds
\[
\int_{\R^d} h Q_1(f,h) \dd v \leq  - N_{s,\gamma}^f(h)^2 + C \|h\|_{L^2(\R^d)}^2 \|f\|_{L^\infty_q(\R^d)},
\]
for a constant $C>0$ depending only on $d$, $\gamma$, $s$, and $q$, where
\begin{equation}\label{e:Nsgamma}
\begin{split}
N_{s,\gamma}^f(h) &:= \left( \int_{\R^d} \int_{\R^d} K_f(v,w) |h(v+w) - h(v)|^2 \dd w \dd v\right)^{1/2}\\
&= \left( c_{d,\gamma,s}\int_{\R^d} [f\ast|\cdot|^{\gamma+2s}]  \int_{\R^d}\frac{|h(v+w) - h(v)|^2}{|w|^{d+2s}} \dd w \dd v\right)^{1/2}.
\end{split}
\end{equation}
Furthermore, for any $h\in H^s(\R^d)$ and $f\in L^\infty_q(\R^d)$, there holds
\begin{equation}\label{e:Nbound}
N_{s,\gamma}^f(h) \leq \|f\|_{L^\infty_q(\R^d)}^{1/2} [h]_{H^{s}(\R^d)}, 
\end{equation}
and if $f$ additionally satisfies the hypotheses of Lemma \ref{l:coercive}, there holds
\begin{equation}\label{e:Nlower}
N_{s,\gamma}^f(h) \geq c_0 [h]_{H^s_{(\gamma+2s)/2}(\R^d)},
\end{equation}
with $c_0>0$ depending on the constants in Lemma \ref{l:coercive}.
\end{lemma}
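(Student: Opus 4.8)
The plan is to establish the three claims in order, since each builds on the previous ones. For the first inequality, I would start from the Carleman-type identity \eqref{e:identity} with $\varphi = h$, which gives
\[
\int_{\R^d} h Q_1(f,h) \dd v = c_{d,\gamma,s}\int_{\R^d}\int_{\R^d} h(v) [f\ast|\cdot|^{\gamma+2s}](v+w)\frac{h(v+w) - h(v)}{|w|^{d+2s}} \dd w \dd v.
\]
The idea is the standard symmetrization trick used in the proof of Theorem \ref{t:L2}: also write the ``un-shifted'' version where the weight is $[f\ast|\cdot|^{\gamma+2s}](v)$ rather than $[f\ast|\cdot|^{\gamma+2s}](v+w)$, average the two, and complete the square. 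Averaging produces $-\tfrac12 N_{s,\gamma}^f(h)^2$ plus an error term involving the difference $[f\ast|\cdot|^{\gamma+2s}](v+w) - [f\ast|\cdot|^{\gamma+2s}](v)$ multiplied by $h(v)(h(v+w)-h(v))|w|^{-d-2s}$. This error is where the work lies: one needs to show it is bounded by $C\|h\|_{L^2}^2\|f\|_{L^\infty_q}$ plus a small multiple of $N_{s,\gamma}^f(h)^2$ that can be absorbed. To control it, I would split the $w$-integral at $|w| = 1$ (or at $|w| = \langle v\rangle$); for small $w$ use the Lipschitz/Hölder modulus of continuity of $[f\ast|\cdot|^{\gamma+2s}]$ — which follows from $f\in L^\infty_q$ with $q>\gamma+2s+d$ ensuring the convolution is finite and its first difference is controlled — together with Cauchy–Schwarz to peel off an $N_{s,\gamma}^f(h)$ factor, and for large $w$ the singularity $|w|^{-d-2s}$ is integrable so one bounds crudely by $\|h\|_{L^2}^2$ times $\|f\ast|\cdot|^{\gamma+2s}\|_{L^\infty}\lesssim \|f\|_{L^\infty_q}$. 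Young's inequality then absorbs the $N_{s,\gamma}^f(h)$ term into the negative term.

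For the upper bound \eqref{e:Nbound}, this is immediate from the second expression for $N_{s,\gamma}^f(h)$ in \eqref{e:Nsgamma}: pull the weight out in sup norm, $[f\ast|\cdot|^{\gamma+2s}](v)\leq \|f\ast|\cdot|^{\gamma+2s}\|_{L^\infty}\lesssim \|f\|_{L^\infty_q(\R^d)}$ (a standard convolution estimate, valid since $q>\gamma+2s+d$), leaving exactly $[h]_{H^s(\R^d)}^2$. One must double-check the constant; with the convention on $c_{d,\gamma,s}$ and the precise convolution bound this should come out as stated, or one simply absorbs constants since the lemma only claims $\leq$.

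For the lower bound \eqref{e:Nlower}, I would invoke Lemma \ref{l:coercive}, which under its hypotheses gives $K_f(v,w)\geq c_0\langle v\rangle^{\gamma+2s}|w|^{-d-2s}$. Substituting into the first expression for $N_{s,\gamma}^f(h)^2$ in \eqref{e:Nsgamma} yields
\[
N_{s,\gamma}^f(h)^2 \geq c_0\int_{\R^d}\int_{\R^d}\frac{|h(v+w)-h(v)|^2}{|w|^{d+2s}}\langle v\rangle^{\gamma+2s}\dd w\dd v,
\]
and the remaining task is to recognize the right-hand side as (comparable to) $[h]_{H^s_{(\gamma+2s)/2}(\R^d)}^2 = [\langle\cdot\rangle^{(\gamma+2s)/2} h]_{H^s(\R^d)}^2$. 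The two are not literally equal because the weight $\langle v\rangle^{\gamma+2s}$ sits outside the difference rather than being distributed as $\langle v+w\rangle^{(\gamma+2s)/2}\langle v\rangle^{(\gamma+2s)/2}$ inside; the standard fix is a commutator-type estimate showing $[\langle\cdot\rangle^{\mu}h]_{H^s}^2 \lesssim \int\int |h(v+w)-h(v)|^2\langle v\rangle^{2\mu}|w|^{-d-2s} + \|h\|_{L^2_{\mu-s}}^2$ and conversely, using that $\langle\cdot\rangle^\mu$ is Lipschitz-with-decay. Since the ambient setting already has an $L^2$ bound available and the statement only asserts $\geq$ with an unspecified $c_0$, one can either absorb the lower-order term or note it is controlled; I would present the weighted-commutator comparison as the one genuinely technical point, referencing a lemma in Appendix \ref{s:a} if such an estimate is collected there. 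The main obstacle overall is the error-term estimate in the first part — specifically, getting the modulus of continuity of $f\ast|\cdot|^{\gamma+2s}$ in terms of $\|f\|_{L^\infty_q}$ sharp enough that the resulting bound closes with only the $L^2$ norm of $h$ (not a higher norm) on the right.
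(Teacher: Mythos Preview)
Your overall architecture is right, but two points need correcting, and the second is a genuine gap.

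First, a small but confusing slip: identity \eqref{e:identity} is for the full $Q$, not $Q_1$, so the displayed formula you start from is not $\int hQ_1(f,h)$. The paper instead starts directly from the formula \eqref{e:Q1} for $Q_1$, which has the weight $[f\ast|\cdot|^{\gamma+2s}](v)$ (not $(v+w)$), writes $h(v)(h(v+w)-h(v)) = -(h(v+w)-h(v))^2 + h(v+w)(h(v+w)-h(v))$, and then swaps $v\leftrightarrow v+w$ in the second piece. This produces exactly $-N_{s,\gamma}^f(h)^2 - \int hQ_1(f,h)$ plus an error $I$ involving the first difference of the weight, so after moving the repeated term to the left one has $2\int hQ_1(f,h) = -N^2 + I$. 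So far this matches your plan in spirit.

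The real divergence is in estimating $I$. Your scheme---Hölder continuity of $f\ast|\cdot|^{\gamma+2s}$, then Cauchy--Schwarz to ``peel off an $N_{s,\gamma}^f(h)$ factor,'' then absorb---does not close under the stated hypotheses. To extract $N_{s,\gamma}^f(h)$ from the error you would need the weight $[f\ast|\cdot|^{\gamma+2s}](v)$ inside the Cauchy--Schwarz, but in $I$ only its \emph{difference} appears; inserting $A(v)^{1/2}A(v)^{-1/2}$ to force the weight in would leave a factor $A(v)^{-1}$ on the other side, and nothing in the first part of the lemma guarantees a lower bound on $A=f\ast|\cdot|^{\gamma+2s}$ (that hypothesis only enters for \eqref{e:Nlower}). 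If instead you run Cauchy--Schwarz without the weight, you extract the unweighted $[h]_{H^s}$, which cannot be absorbed into $-N_{s,\gamma}^f(h)^2$ for the same reason. The paper sidesteps this entirely with a different idea: symmetrize $I$ under $w\mapsto -w$, turning the first difference of the weight into a second difference $|z+w|^{\gamma+2s}+|z-w|^{\gamma+2s}-2|z|^{\gamma+2s}$ and simultaneously replacing $h(v)-h(v+w)$ by the second difference $2h(v)-h(v+w)-h(v-w)$. The punchline is that $\|2h(\cdot)-h(\cdot+w)-h(\cdot-w)\|_{L^2}\leq 4\|h\|_{L^2}$ by the triangle inequality---no regularity of $h$ is used at all---so after Cauchy--Schwarz in $v$ one is left with $\|h\|_{L^2}^2\|f\|_{L^\infty_q}$ times a $(w,z)$-integral involving only the second difference of $|z|^{\gamma+2s}$ against $\langle v-z\rangle^{-q}$, which is then shown to be finite by splitting into $\{|w|\le 1,|z|\ge 2|w|\}$, $\{|w|\le 1,|z|<2|w|\}$, $\{|w|>1\}$. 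This second-order symmetrization is essential when $s\ge 1/2$ and is the step you are missing.

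Your treatments of \eqref{e:Nbound} and \eqref{e:Nlower} are fine and match the paper; the weight-placement issue you flag for \eqref{e:Nlower} is real but minor, and the paper simply declares it follows from Lemma~\ref{l:coercive} and the definition.
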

Lemma \ref{l:good-sign} can be compared to the well-known {\it entropy dissipation} estimate satisfied by the Boltzmann collision operator \cite{alexandre2000entropy}.
\begin{proof}
From the formula \eqref{e:Q1} for $Q_1$, we have
\begin{equation*}
\begin{split}
\int_{\R^d}  h Q_1(f,h) \dd v &= c_{d,\gamma,s}\int_{\R^d}\int_{\R^d} h(v)\frac {h(v+w) -h(v)}{|w|^{d+2s}}[f\ast |\cdot|^{\gamma+2s}](v) \dd w \dd v\\
&= - c_{d,\gamma,s}\int_{\R^d}\int_{\R^d} \frac{|h(v+w) - h(v)|^2}{|w|^{d+2s}} [f\ast |\cdot|^{\gamma+2s}](v) \dd w \dd v\\
&\quad  + c_{d,\gamma,s}\int_{\R^d} \int_{\R^d} h(v+w) \frac{h(v+w) - h(v)}{|w|^{d+2s}} [f\ast |\cdot|^{\gamma+2s}](v) \dd w \dd v.
\end{split}
\end{equation*}
We observe that the first term on the right is equal to $-N_{s,\gamma}^f(h)^2$. In the second term, we change variables to swap $v \leftrightarrow v+w$, yielding
\begin{equation}\label{e:hQ1}
\begin{split}
\int_{\R^d}& h Q_1(f,h) \dd v\\
 &=  -N_{s,\gamma}^f(h)^2 + c_{d,\gamma,s}\int_{\R^d} \int_{\R^d} h(v)\frac{h(v) - h(v+w)}{|w|^{d+2s}} [f\ast |\cdot|^{\gamma+2s}](v+w) \dd w \dd v\\
&=   -N_{s,\gamma}^f(h)^2 -\int_{\R^d} h Q_1(f,h) \dd v\\
&\quad  + c_{d,\gamma,s}\int_{\R^d} \int_{\R^d} h(v)\frac{h(v) - h(v+w)}{|w|^{d+2s}} \left([f\ast |\cdot|^{\gamma+2s}](v+w) - [f \ast |\cdot|^{\gamma+2s}](v)\right) \dd w \dd v.
\end{split}
\end{equation}
The last integral on the right is equal to
\[
\begin{split}
I:=\int_{\R^d} \int_{\R^d} h(v)(h(v) - h(v+w)) \int_{\R^d} f(v-z) \frac { |z+w|^{\gamma+2s} -  |z|^{\gamma+2s}}{|w|^{d+2s}} \dd z\dd w \dd v. 
\end{split}
\]
The following analysis of the integral $I$ covers all cases $s\in (0,1)$. If we were only concerned with $s<\frac 1 2$, the proof would be easier since a first-order cancellation would be sufficient to handle the singularity as $w\to 0$. 

Using the change of variables $w\mapsto -w$, we obtain
\[
\begin{split}
2I&=  \int_{\R^d}\int_{\R^d} h(v) (2h(v) - h(v+w) - h(v-w)) \\
&\quad \quad\times\int_{\R^d} f(v-z) \frac{|z+w|^{\gamma+2s} + |z-w|^{\gamma+2s} - 2|z|^{\gamma+2s}}{|w|^{d+2s}} \dd z \dd w \dd v\\
&\leq \int_{\R^d} \left(\int_{\R^d} h^2(v)\dd v\right)^{1/2}\left(\int_{\R^d} (h(v) - h(v+w)-h(v-w))^2 \dd v\right)^{1/2}\\
&\quad \quad \times \sup_{v\in \R^d}\int_{\R^d} f(v-z)  \frac {\left| |z+w|^{\gamma+2s} +|z-w|^{\gamma+2s} -  2|z|^{\gamma+2s}\right|}{|w|^{d+2s}}\dd z \dd w\\
&\lesssim \|h\|_{L^2(\R^d)}^2 \|f\|_{L^\infty_q(\R^d)} \int_{\R^d} \sup_{v\in \R^d} \int_{\R^d} \langle v-z\rangle^{-q}\frac { \left| |z+w|^{\gamma+2s} +|z-w|^{\gamma+2s} -  2|z|^{\gamma+2s}\right|}{|w|^{d+2s}}\dd z \dd w.
\end{split}
\]
We divide this $(w,z)$ integral into three parts: $\{|w|\leq 1, |z|\geq 2|w|\}$,  $\{|w|\leq 1, |z|< 2|w|\}$, and $\{|w|>1\}$. For the first part, a Taylor expansion gives
\[
\begin{split}
\int_{\{|w|\leq 1\}} \sup_{v\in \R^d} \int_{\{|z|\geq 2|w|\}} &\langle v-z\rangle^{-q} \frac { \left| |z+w|^{\gamma+2s} +|z-w|^{\gamma+2s} -  2|z|^{\gamma+2s}\right|}{|w|^{d+2s}}\dd z \dd w \\
&\lesssim \int_{\{|w|\leq 1\}} \sup_{v\in \R^d} \int_{\R^d} \langle v-z\rangle^{-q} |z|^{\gamma+2s-2} |w|^{2-d-2s} \dd z \dd w\\
&\lesssim \int_{\{|w|\leq 1\}} |w|^{2-d-2s} \dd w \lesssim 1,
\end{split}
\]
since $|w|\leq |z|/2$ and $q> \gamma+2s+d$. Next, we have
\[
\begin{split}
\int_{\{|w|\leq 1\}}&\sup_{v\in \R^d} \int_{\{|z|< 2|w|\}} \langle v-z\rangle^{-q} \frac { \left| |z+w|^{\gamma+2s} +|z-w|^{\gamma+2s} -  2|z|^{\gamma+2s}\right|}{|w|^{d+2s}}\dd z \dd w \\
&\lesssim \int_{\{|w|\leq 1\}} |w|^{-d-2s} \int_{\{|z|< 2|w|\}} \left| |z+w|^{\gamma+2s} +|z-w|^{\gamma+2s} -  2|z|^{\gamma+2s}\right| \dd z \dd w\\
&\lesssim \int_{\{|w|\leq 1\}} |w|^{-d-2s} |w|^{\gamma+2s+d} \dd w \lesssim 1.
\end{split}
\]
For the remaining part,
\[
\begin{split}
\int_{\{|w| > 1\}} \sup_{v\in \R^d} &\int_{\R^d} \langle v-z\rangle^{-q} \frac { \left||z+w|^{\gamma+2s} + |z-w|^{\gamma+2s} - 2 |z|^{\gamma+2s}\right|}{|w|^{d+2s}}\dd z \dd w\\ 
&\leq \int_{\{|w|>1\}} |w|^{-d-2s}\sup_{v\in \R^d} \left(\int_{\R^d} \langle v-z\rangle^{-q}|z+w|^{\gamma+2s} \dd z \right.\\
&\qquad \left. + \int_{\R^d} \langle v-z\rangle^{-q} |z+w|^{\gamma+2s} \dd z + 2\int_{\R^d} \langle v-z\rangle^{-q} |z|^{\gamma+2s} \dd z\right) \dd w \\
&\lesssim \int_{\{|w|>1\}} |w|^{-d-2s}\sup_{v\in \R^d}\left( \langle v+w\rangle^{\gamma+2s} + \langle v-w\rangle^{\gamma+2s} +2 \vv^{\gamma+2s}\right) \dd w\\
&\lesssim 1.
\end{split}
\]
Returning to \eqref{e:hQ1} and collecting terms, we obtain the first statement of the lemma. 

The upper bound \eqref{e:Nbound} follows from the convolution estimate of Lemma \ref{l:convolution}(a) and the definition \eqref{e:Nsgamma} of $N_{s,\gamma}^f(h)$. The lower bound \eqref{e:Nlower} follows from Lemma \ref{l:coercive} and \eqref{e:Nsgamma}.
\end{proof}

We need the following interpolation lemma to trade decay for regularity. The proof is the same as \cite[Lemma 2.6]{HST2020boltzmann}.

\begin{lemma}\label{l:interpolation}
For any $q, m\geq 0$, suppose that $g \in L^\infty_m \cap H^k_q(\R^d)$ and $k' \in (0,k)$. Then if $\ell < (m-d/2)(1-k'/k) +q(k'/k)$, there holds
\[
\|g\|_{H^{k'}_\ell(\R^d)} \leq C \|g\|_{L^\infty_m(\R^d)}^{1-k'/k}\|g\|_{H^k_q(\R^d)}^{k'/k} \leq C\left( \|g\|_{L^\infty_m(\R^d)} + \|g\|_{H^k_q(\R^d)}\right).
 \]
The constant $C>0$ depends on $d$, $k$, $k'$, $q$, and $m$. 
%
\end{lemma}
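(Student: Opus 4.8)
The plan is to reduce the weighted estimate to the classical unweighted Sobolev interpolation inequality
\[
\|u\|_{H^{k'}(\R^d)} \lesssim \|u\|_{L^2(\R^d)}^{1-k'/k}\,\|u\|_{H^k(\R^d)}^{k'/k},
\]
valid for all $0<k'<k$ (integer or not), which follows from the Fourier characterization $\|u\|_{H^\sigma(\R^d)}\approx\|(1+|\cdot|^2)^{\sigma/2}\widehat u\|_{L^2(\R^d)}$ together with H\"older's inequality on the frequency side. The polynomial weight is then handled by a dyadic decomposition of $\R^d$ on which $\vv$ is essentially constant.

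First I would write $\theta=k'/k\in(0,1)$ and choose an auxiliary exponent $\ell_0<m-d/2$ with $\ell\le\ell':=(1-\theta)\ell_0+\theta q$; such an $\ell_0$ exists precisely because $\ell<(m-d/2)(1-k'/k)+q(k'/k)$. Since $\ell\le\ell'$, multiplication by the bounded smooth function $\vv^{\ell-\ell'}$ (all of whose derivatives are bounded) is bounded on $H^{k'}(\R^d)$, so it suffices to prove the bound with $\ell$ replaced by $\ell'$. Next, fix a partition of unity $1=\sum_{j\ge0}\phi_j$ on $\R^d$ with $\phi_0$ supported in $B_2$, with $\phi_j$ supported in $\{2^{j-1}\le|v|\le2^{j+1}\}$ for $j\ge1$, with $|\partial^\beta\phi_j|\lesssim2^{-j|\beta|}$, and with the supports finitely overlapping; note $\vv\approx2^j$ on $\supp\phi_j$.

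A routine Leibniz-rule computation, using $|\partial^\beta(\vv^a\phi_j)|\lesssim 2^{j(a-|\beta|)}$ for any $a\in\R$ together with the finite overlap of the supports, then yields $\|g\|_{H^{k'}_{\ell'}(\R^d)}^2\lesssim\sum_{j\ge0}2^{2j\ell'}\|\phi_j g\|_{H^{k'}(\R^d)}^2$, $\sum_{j\ge0}2^{2j\ell_0}\|\phi_j g\|_{L^2(\R^d)}^2\lesssim\|g\|_{L^2_{\ell_0}(\R^d)}^2$, and $\sum_{j\ge0}2^{2jq}\|\phi_j g\|_{H^k(\R^d)}^2\lesssim\|g\|_{H^k_q(\R^d)}^2$. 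Applying the unweighted interpolation inequality to each $\phi_j g$, then H\"older's inequality in $j$ with exponents $1/(1-\theta)$ and $1/\theta$, and using $\ell'=(1-\theta)\ell_0+\theta q$,
\begin{multline*}
\sum_j 2^{2j\ell'}\|\phi_j g\|_{H^{k'}}^2 \lesssim \sum_j\bigl(2^{2j\ell_0}\|\phi_j g\|_{L^2}^2\bigr)^{1-\theta}\bigl(2^{2jq}\|\phi_j g\|_{H^k}^2\bigr)^{\theta}\\
\le\Bigl(\sum_j2^{2j\ell_0}\|\phi_j g\|_{L^2}^2\Bigr)^{1-\theta}\Bigl(\sum_j2^{2jq}\|\phi_j g\|_{H^k}^2\Bigr)^{\theta},
\end{multline*}
which gives $\|g\|_{H^{k'}_\ell(\R^d)}\lesssim\|g\|_{L^2_{\ell_0}(\R^d)}^{1-\theta}\|g\|_{H^k_q(\R^d)}^{\theta}$. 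To finish, I would bound $\|g\|_{L^2_{\ell_0}(\R^d)}=\|\vv^{\ell_0-m}\,\vv^m g\|_{L^2}\le\|g\|_{L^\infty_m(\R^d)}\|\vv^{\ell_0-m}\|_{L^2(\R^d)}\lesssim\|g\|_{L^\infty_m(\R^d)}$, finite because $\ell_0-m<-d/2$; Young's inequality $a^{1-\theta}b^{\theta}\le a+b$ then supplies the second inequality of the lemma.

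The only genuinely delicate point is the bookkeeping in the Leibniz-rule step: one must verify that each term produced by distributing $\partial^\beta$ across $\vv^a\phi_j g$ (with $a\in\{\ell',\ell_0,q\}$) is either the main term $\vv^a\phi_j\,\partial^\beta g$ or a strictly lower-order term that, on the annulus where $|v|\approx2^j$, is dominated by the main term once one sums in $j$ using the finite overlap; for non-integer $k'$ the same holds with the Gagliardo seminorm in place of $\partial^\beta$. Everything else in the argument is soft. (Alternatively, the weighted interpolation inequality $\|g\|_{H^{k'}_{\ell'}}\lesssim\|g\|_{L^2_{\ell_0}}^{1-\theta}\|g\|_{H^k_q}^{\theta}$ is precisely the complex-interpolation identity $[L^2_{\ell_0}(\R^d),H^k_q(\R^d)]_\theta=H^{k\theta}_{\ell'}(\R^d)$, but the dyadic argument above is more self-contained.)
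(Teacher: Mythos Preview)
Your argument is correct: the dyadic localization reduces the weighted estimate to the standard unweighted interpolation $\|u\|_{H^{k'}}\lesssim\|u\|_{L^2}^{1-\theta}\|u\|_{H^k}^{\theta}$, and the passage from $L^2_{\ell_0}$ to $L^\infty_m$ via $\|\vv^{\ell_0-m}\|_{L^2}<\infty$ is exactly where the strict inequality on $\ell$ is used. The paper does not supply its own proof here---it simply cites \cite[Lemma~2.6]{HST2020boltzmann}---so there is nothing further to compare.
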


Now we are ready to prove our main theorem:

\begin{proof}[Proof of Theorem \ref{t:main}]

By Theorem \ref{t:short-time-existence}, which we prove in Section \ref{s:short} below, a solution $f\geq 0$ exists on some time interval $[0,T]$. This solution can be extended by re-applying Theorem \ref{t:short-time-existence} for as long as the $H^{d+3}_q(\R^d)$ norm of $f(t,\cdot)$ remains finite, so we assume by contradiction that $f$ exists on some maximal time interval $[0,T_*)$ with $T_*\geq T$, and that 
\begin{equation}\label{e:contradiction-global}
\|f\|_{L^\infty([0,t],H^{d+3}_q(\R^d))} \nearrow \infty \quad \text{ as } t \to T_*.
\end{equation}
From estimate \eqref{e:energy-est-local-thm} in Theorem \ref{t:short-time-existence}, and our assumption on $f_{\rm in}$, we see that $f$ is $C^\infty$ with rapid decay on $[0,T_*) \times\R^d$ (but this smoothness could {\it a priori} degenerate as $t\to T_*$). We can therefore apply our estimates from earlier in the paper:
\begin{itemize}

\item Conservation of mass on $[0,T_*)$ follows from the formal argument in Section \ref{s:conservation}, which is rigorously valid because $f$ is smooth and rapidly decaying. 

\item Theorem \ref{t:L2} implies  the $L^2(\R^d)$ norm of $f(t)$, for any $t\in [0,T_*)$, is bounded by its value at $t=0$.

\item Lemma \ref{l:energy} provides a bound on the energy $E(t) = \int_{\R^d} |v|^2 f(t,v) \dd v$ for  $t \in [0,T_*)$, that depends on $d$, $\gamma$, $s$, $q$, the initial data, and $T_*$, and is finite since $T_*<\infty$.

\item Because of the mass, energy, and $L^2$ bounds, we can apply the $L^\infty$ estimate of Proposition \ref{p:Linfty}, which is uniform on $[T_*/2,T_*)\times\R^d$. On the remaining time interval $[0,T_*/2)$, the $H^{d+3}_q(\R^d)$ norm of $f(t,\cdot)$ is uniformly bounded as a result of \eqref{e:contradiction-global}, so by Sobolev embedding, $f$ is bounded in $L^\infty(\R^d)$ for $t\in [0,T_*/2)$ as well.

\item With the $L^\infty$ bound from the previous bullet point, the decay estimates of Proposition \ref{p:decay} apply: 
for each $q>0$, there is a constant $C_q$ with
\begin{equation}\label{e:decay-global}
\|f(t)\|_{L^\infty_q(\R^d)} \leq C_q, \quad t\in [0,T_*).
\end{equation}
with $C_q>0$ depending on $q$, $d$, $\gamma$, $s$, the initial data, and the energy bound.
\end{itemize}

For the remainder of this proof, we call a constant {\it universal} if it depends only on $d$, $\gamma$, $s$, the initial data, and the energy bound on $[0,T_*)$. Quantities that depend only on universal constants will be absorbed into inequalities via the $\lesssim$ symbol, sometimes without comment. 

Our goal is to show that all Sobolev norms of $f$ in $v$ remain finite on $[0,T_*)$. We prove this by induction. For the base case, we multiply the equation \eqref{e:homogeneous} by $f$ and integrate over $\R^d$ to obtain
\[
\frac 1 2 \frac d {dt} \int_{\R^d} f^2 \dd v = \int_{\R^d} f Q_1(f,f) \dd v + \int_{\R^d} f Q_2(f,f) \dd v.
\]
Using the coercivity estimate of Lemma \ref{l:good-sign} for the $Q_1$ term, we have, for $q>\gamma+2s+d$, 
\[ 
\begin{split}
\frac 1 2 \frac d {dt} \int_{\R^d} f^2 \dd v &\leq -N_{s,\gamma}^f(f)^2 + C\|f\|_{L^2(\R^d)}^2 \|f\|_{L^\infty_q(\R^d)} + C\int_{\R^d} f^2 [f\ast |\cdot|^\gamma] \dd v\\
&\leq - c [f]_{H^s_{(\gamma+2s)/2}(\R^d)}^2 + C\|f\|_{L^2(\R^d)}^2 \|f\|_{L^\infty_q(\R^d)} + C\|f\|_{L^2(\R^d)}^2 \|f\|_{L^\infty_m(\R^d)},
\end{split}
\]
where we used the lower bound \eqref{e:Nlower} for $N_{s,\gamma}^f$, and Lemma \ref{l:convolution}(a) with $m>\gamma+d$ in the last term. Recalling that $\frac d {dt} \int_{\R^d} f^2 \dd v = 0$, we now have, using \eqref{e:decay-global},
\[
\int_0^{T_*}[f(t)]_{H^s_{(\gamma+2s)/2}(\R^d)}^2 \dd t \leq C,
\]
for some universal constant $C$.

Now, we assume by induction that for some $k\geq 1$, 
\begin{equation}\label{e:induction}
\|f(t)\|_{H^{k-1}(\R^d)} \leq C_k, \quad t\in [0,T_*],
\end{equation}
and that
\begin{equation}\label{e:induction2}
\|F_{k-1}(t)\|_{L^2([0,T_*))}^2 := \int_0^{T_*} \|f(t)\|_{H^{k-1+s}_{(\gamma+2s)/2}(\R^d)}^2 \dd t \leq C_k,
\end{equation}
for some $C_k$ depending only on universal constants and $k$. Letting $\beta = (\beta_1,\ldots, \beta_d)$ be a multi-index in the $v$ variable of order $k$, we differentiate the equation \eqref{e:homogeneous} by $\partial^\beta$ and integrate against $\partial^\beta f$.  It is easy to see that the operator $Q$ satisfies a Liebnitz-type rule $\partial_{v_i}Q(F,G) = Q(\partial_{v_i} F, G) + Q(F, \partial_{v_i}G)$. We then obtain
\begin{equation}\label{e:energy-est-global}
\begin{split}
\frac 1 2 \frac d {dt} \int_{\R^{d}}  (\partial^\beta f)^2 \dd v
&= \int_{\R^{d}} \partial^\beta f \partial^\beta Q(f,f) \dd v \\
&=   \sum_{\beta' + \beta'' = \beta} \left( \int_{\R^d}  \partial^\beta f Q_1(\partial^{\beta'} f, \partial^{\beta''}f) \dd v + \int_{\R^d}  \partial^\beta f Q_2(\partial^{\beta'} f, \partial^{\beta''}f) \dd v\right).
\end{split}
\end{equation}
Starting with the terms involving $Q_1$, there are three cases:

If $|\beta'| = 0$ and $\beta'' = \beta$, then we use the coercivity estimate of Lemma \ref{l:good-sign} and obtain, for any $q>\gamma+2s+d$,
\begin{equation}\label{e:c0term}
\begin{split}
\int_{\R^d} \partial^\beta f Q_1(f, \partial^\beta f) \dd v &\leq -N_{s,\gamma}^f (\partial^\beta f)^2 + C \|\partial^\beta f\|_{L^2(\R^d)}^2 \|f\|_{L^\infty_q(\R^d)}\\
&\leq -c_0 [\partial^\beta f]_{H^s_{(\gamma+2s)/2}(\R^d)}^2 + C\|f\|_{H^k(\R^d)}^2,
\end{split}
\end{equation}
by \eqref{e:Nlower} and the decay estimate \eqref{e:decay-global}. This negative term will absorb the other highest-order terms that arise in the energy estimates.

If $1\leq |\beta'| \leq k-1$, then we have, using Lemma \ref{l:convolution}(b) (since $\gamma + 2s > -d/2$, by our assumption $\gamma \geq -\frac{d+4s} 3$),
\[
\begin{split}
\int_{\R^d} \partial^\beta f Q_1(\partial^{\beta'}f, \partial^{\beta''} f) \dd v &\approx \int_{\R^d} \partial^\beta f [\partial^{\beta'}f\ast |\cdot|^{\gamma+2s}] (-\Delta)^s \partial^{\beta''} f \dd v\\
&\lesssim \|\partial^\beta f \|_{L^2(\R^d)} \|\partial^{\beta'} f\ast |\cdot|^{\gamma+2s-1}\|_{L^\infty(\R^d)} \|\partial^{\beta''} f\|_{H^{2s}(\R^d)}\\
&\lesssim \|f\|_{H^k(\R^d)} \|\partial^{\beta'} f\|_{L^2_m(\R^d)} \|f\|_{H^{k-1+2s}(\R^d)},
\end{split}
\]
for some $m>d/2+\gamma+2s$. We also used $|\beta''|\leq k-1$. In the last expression,  we use the interpolation estimate of Lemma \ref{l:interpolation} for the middle factor (since $|\beta'| \leq k-1$) and obtain
\[
\begin{split}
\|\partial^{\beta'} f \|_{L^2_m(\R^d)} &= \|\vv^{m-(\gamma+2s)/2} \partial^{\beta'} f \|_{L^2_{(\gamma+2s)/2}(\R^d)}\\
&\lesssim \|f\|_{H^{k-1+s}_{(\gamma+2s)/2}(\R^d)} +  \|f\|_{L^\infty_{m+1-(\gamma+2s)/2}(\R^d)}\lesssim \|f\|_{H^{k-1+s}_{(\gamma+2s)/2}(\R^d)} + 1,
\end{split}
\]
by the decay estimate \eqref{e:decay-global}. Next, we use a similar interpolation to write $\|f\|_{H^{k-1+2s}(\R^d)}\lesssim \|f\|_{H^{k+s}_{(\gamma+2s)/2}(\R^d)} + \|f\|_{L^\infty_{1-(\gamma+2s)/2}(\R^d)} \lesssim \|f\|_{H^{k+s}_{(\gamma+2s)/2}(\R^d)} +1$, giving 
\[
\begin{split}
\int_{\R^d} \partial^\beta f Q_1(\partial^{\beta'}f, \partial^{\beta''} f) \dd v &\lesssim \|f\|_{H^k(\R^d)}\left( \|f\|_{H^{k-1+s}_{(\gamma+2s)/2}(\R^d)} +1\right) \left(\|f\|_{H^{k+s}_{(\gamma+2s)/2}(\R^d)} + 1\right)\\
&\leq c_0^{-1}C \|f\|_{H^k(\R^d)}^2 \left( \|f\|_{H^{k-1+s}_{(\gamma+2s)/2}(\R^d)} + 1\right)^2+ \frac{c_0}{4d^k}\|f\|_{H^{k+s}_{(\gamma+2s)/2}(\R^d)}^2 + C\\
&\leq c_0^{-1}C\left( 1 + F_{k-1}^2(t) \right)\|f\|_{H^k(\R^d)}^2 + \frac {c_0} {4d^k} \|f\|_{H^{k+s}_{(\gamma+2s)/2}(\R^d)}^2 + C.
\end{split}
\]
by Young's inequality. Here, $c_0$ is the constant appearing in \eqref{e:c0term}, $F_{k-1}(t)$ is defined in \eqref{e:induction2}, and $C$ is a universal constant.

If $|\beta'| = k$ and $|\beta''| = 0$, then we need to proceed in two subcases, $k=1$ and $k\geq 2$. When $k=1$, we have $\partial^\beta f= \partial_{v_i} f$ for some $i$. We transfer this derivative from $f$ to the convolution kernel $|v|^{\gamma+2s}$ via $(\partial_{v_i} f)\ast |\cdot|^{\gamma+2s} = f \ast (\partial_{v_i} |\cdot|^{\gamma+2s})$. The new kernel still has an integrable singularity because $\gamma+2s-1> -d$ by our assumption $\gamma \geq -\frac{d+4s} 3$. We then have
\[
\begin{split}
\int_{\R^d} \partial_{v_i} f Q_1(\partial_{v_i} f, f) \dd v &\approx \int_{\R^d} \partial_{v_i} f [\partial_{v_i}f\ast |\cdot|^{\gamma+2s}] (-\Delta)^s f \dd v\\
&\lesssim \int_{\R^d} \partial_{v_i}f [f \ast |\cdot|^{\gamma+2s-1}](-\Delta)^s f \dd v\\
&\lesssim \|\partial_{v_i} f\|_{L^2(\R^d)} \|f\ast |\cdot|^{\gamma+2s-1}\|_{L^\infty(\R^d)} \|f\|_{H^{2s}(\R^d)}.
\end{split}
\]
We estimate the convolution with Lemma \ref{l:convolution}(a) and \eqref{e:decay-global}, and absorb this term into the constant. We estimate the $H^{2s}$ norm with an interpolation similar to above:
\[
\|f\|_{H^{2s}(\R^d)} \lesssim \|f\|_{H^{1+s}_{(\gamma+2s)/2}(\R^d)} + 1,
\]
by \eqref{e:decay-global} again. We now have
\[
\begin{split}
\int_{\R^d} \partial_{v_i} f Q_1(\partial_{v_i} f, f) \dd v &\lesssim \|f\|_{H^1(\R^d)} \|f\|_{H^{1+s}(\R^d)} \leq c_0^{-1} C \|f\|_{H^1(\R^d)}^2 + \frac {c_0} {4 d} \|f\|_{H^{1+s}(\R^d)}^2.
\end{split}
\]
When $k\geq 2$, we use Lemma \ref{l:convolution}(b) with $m>\gamma+2s+d$ to estimate the convolution:
\[
\begin{split}
\int_{\R^d} \partial^\beta f Q_1(\partial^{\beta}f,  f) \dd v &\approx \int_{\R^d} \partial^\beta f [\partial^\beta f\ast |\cdot|^{\gamma+2s}] (-\Delta)^s f \dd v\\
&\lesssim \|\partial^\beta f \|_{L^2(\R^d)} \|\partial^\beta f\ast |\cdot|^{\gamma+2s}\|_{L^\infty(\R^d)} \|f\|_{H^{2s}(\R^d)}\\
&\lesssim \|f\|_{H^k(\R^d)} \|\partial^\beta f\|_{L^2_m(\R^d)} \|f\|_{H^{2s}(\R^d)}.
\end{split}
\]
We apply interpolation to the second and third factors, in a similar manner to above, and obtain
\[
\begin{split}
\int_{\R^d} \partial^\beta f Q_1(\partial^{\beta}f,  f) \dd v &\lesssim \|f\|_{H^k(\R^d)} \left( \|f\|_{H^{k+s}_{(\gamma+2s)/2}(\R^d)} + 1\right) \left( \|f\|_{H^{k-1+s}(\R^d)}+1\right)\\
&\leq c_0^{-1} C \|f\|_{H^k(\R^d)}^2\left( F_{k-1}^2(t) + 1\right) + \frac {c_0} {4d^k} \|f\|_{H^{k+s}_{(\gamma+2s)/2}(\R^d)}^2 + C.
\end{split}
\]

For the terms involving $Q_2$ in \eqref{e:energy-est-global}, we write
\[
\int_{\R^d}  \partial^\beta f Q_2(\partial^{\beta'} f, \partial^{\beta''}f) \dd v \approx \int_{\R^d} \partial^\beta f \partial^{\beta''} f [\partial^{\beta'} f \ast |\cdot|^\gamma] \dd v.
\]
Again, there are three cases.

If $|\beta'| = k$ and $|\beta''| = 0$, we pick a constant $\theta > d/2 + \gamma$ and use Lemma \ref{l:young} as well as the weighted $L^\infty$ bounds \eqref{e:decay-global} to obtain
\[
\begin{split}
 \int_{\R^d} \partial^\beta f  \vv^\theta f \vv^{-\theta}[\partial^{\beta} f \ast |\cdot|^\gamma] \dd v
&\leq \|\partial^\beta f\|_{L^2(\R^d)} \|f\|_{L^\infty_\theta(\R^d)} \|\partial^\beta f \ast |\cdot|^\gamma\|_{L^2_{-\theta}(\R^d)}\\
&\leq C \|\partial^\beta f\|_{L^2(\R^d)} \|\partial^\beta f\|_{L^2_{d/2+\eps}(\R^d)},
\end{split}
\]
for some small $\eps>0$. By interpolation (Lemma \ref{l:interpolation}), the decay estimate \eqref{e:decay-global}, and Young's inequality, this expression is bounded above by
\[
C \|f\|_{H^k(\R^d)}\left( \|f\|_{H^{k+s}(\R^d)}+1\right) \leq c_0^{-1} C \left(\|f\|_{H^k(\R^d)}^2 + 1\right) + \frac {c_0} {4d^k} \|f\|_{H^{k+s}(\R^d)}^2.
\]

Next, if $1\leq |\beta'| \leq k-1$, we have $|\beta''|\leq k-1$, and
\begin{equation}\label{e:Q2-global}
\begin{split}
\int_{\R^d} \partial^\beta f \partial^{\beta''} f [\partial^{\beta'} f \ast |\cdot|^\gamma] \dd v
&\leq \|\partial^\beta f\|_{L^2(\R^d)} \| f\|_{H^{k-1}(\R^d)} \|\partial^{\beta'} f \ast |\cdot|^\gamma\|_{L^\infty(\R^d)}\\
&\leq  C_q\|f\|_{H^k} \|\partial^{\beta'} f\ast |\cdot|^\gamma\|_{L^\infty(\R^d)},
\end{split}
\end{equation}
by the inductive hypothesis \eqref{e:induction}. Since $\gamma$ may be too negative to apply Lemma \ref{l:convolution}(b), we transfer a derivative from $|v|^\gamma$ to $\partial^{\beta'} f$. In more detail, using the identity
\[
|v|^\gamma = \frac 1 {d+\gamma} \nabla\cdot (v|v|^\gamma),
\]
we write
\[
\|\partial^{\beta'} f \ast |\cdot|^\gamma\|_{L^\infty(\R^d)} \lesssim  \sum_{i=1}^d \|(\partial_{v_i} \partial^{\beta'} f) \ast |\cdot|^{\gamma+1}\|_{L^\infty(\R^d)}.
\]
From our assumption that $-\frac{d+4s} 3 \leq \gamma < -2$, we have
\[
-\frac d 2 < \gamma+1 < 0,
\]
so we can apply Lemma \ref{l:convolution}(b) with $\mu = \gamma+1$ and $m>\gamma+1+\frac d 2$, and obtain
\[
\|\partial^{\beta'} f \ast |\cdot|^\gamma\|_{L^\infty(\R^d)} \lesssim \sum_{i=1}^d \|(\partial_{v_i}\partial^{\beta'} f) \ast |\cdot|^\gamma\|_{L^\infty(\R^d)} \lesssim \sum_{i=1}^d \| \partial_{v_i} \partial^{\beta'} f\|_{L^2_m(\R^d)}\lesssim \|f\|_{H^k_m(\R^d)}.
\]
As above, we apply interpolation (Lemma \ref{l:interpolation}) plus the decay estimate of \eqref{e:decay-global} to bound $\|f\|_{H^k_m(\R^d)}$ from above by $\lesssim \|f\|_{H^{k+s}_{(\gamma+2s)/2}(\R^d)} + 1$. Returning to \eqref{e:Q2-global}, we now have
\[
\begin{split}
\int_{\R^d} \partial^\beta f Q_2(\partial^{\beta'} f, \partial^{\beta''} f) \dd v &\leq C_k \|f\|_{H^k(\R^d)} \left(\|f\|_{H^{k+s}_{(\gamma+2s)/2}(\R^d)}+1\right)\\
&\leq \frac {C_k}{c_0} \left(\|f\|_{H^k(\R^d)}^2+1\right) + \frac{c_0}{4d^k}\|f\|_{H^{k+s}_{(\gamma+2s)/2}(\R^d)}^2.
\end{split}
\]

Finally, if $|\beta'| = 0$ and $\beta'' = \beta$, we have for any $q>\gamma+d$,
\[
\begin{split}
\int_{\R^d} \partial^\beta f Q_2(f, \partial^{\beta} f) \dd v &\approx \int_{\R^d} [f\ast |\cdot|^\gamma] (\partial^{\beta} f)^2 \dd v\\
& \leq \|f\ast |\cdot|^\gamma\|_{L^\infty(\R^d)} \|f\|_{H^k(\R^d)}^2\\
&\leq C\|f\|_{L^\infty_q(\R^d)} \|f\|_{H^k(\R^d)}^2\\
&\leq C\|f\|_{H^k(\R^d)}^2,
\end{split}
\]
by Lemma \ref{l:convolution}(a) and \eqref{e:decay-global}. 

Now we collect our upper bounds for the terms in \eqref{e:energy-est-global} and sum over all $d^k$ multi-indices $\beta$ with $|\beta| = k$, to obtain
\[
\begin{split}
\frac d {dt} \sum_{|\beta|=k}\|\partial^\beta f\|_{L^2(\R^d)}^2 &\leq - c_0 \sum_{|\beta| = k}  [\partial^\beta f]_{H^s_{(\gamma+2s)/2}(\R^d)}^2 + \frac {c_0} {2} \|f\|_{H^{k+s}_{(\gamma+2s)/2}(\R^d)}^2\\
&\quad  + C(1 + F_{k-1}^2(t))\|f\|_{H^k(\R^d)}^2 + C ,
\end{split}
\]
for a universal constant $C$. Noting that $\|f\|_{H^k(\R^d)}^2 = \sum_{|\beta|=k} \|\partial^\beta f\|_{L^2(\R^d)}^2 + \|f\|_{H^{k-1}(\R^d)}^2$ and  $\|f\|_{H^{k+s}_{(\gamma+2s)/2}(\R^d)}^2 = \sum_{|\beta| = k} [\partial^\beta f]_{H^s_{(\gamma+2s)/2}(\R^d)}^2 + \|f\|_{H^k_{(\gamma+2s)/2}(\R^d)}^2$, our inductive hypothesis \eqref{e:induction} implies
\begin{equation}\label{e:before-gronwall}
\begin{split}
\frac d {dt} \sum_{|\beta|=k}\|\partial^\beta f\|_{L^2(\R^d)}^2 &\leq - \frac{c_0} 2 \sum_{|\beta| = k}  [\partial^\beta f]_{H^s_{(\gamma+2s)/2}(\R^d)}^2   + C(1 + F_{k-1}^2(t))\sum_{|\beta|=k} \|\partial^\beta f\|_{L^2(\R^d)}^2 + C ,
\end{split}
\end{equation}
since $(\gamma+2s)/2<0$. Our second inductive hypothesis \eqref{e:induction2} implies $F_{k-1}^2$ is integrable, so with Gr\"onwall's inequality, we now have
\[
\sum_{|\beta|=k} \|\partial^\beta f\|_{L^2(\R^d)} \leq C \exp\left(C\int_0^t (1+F_{k-1}^2(t'))\dd t'\right) \leq C_k, \quad t\in [0,T_*),
\]
for some $C_k$ depending on universal constants and $k$. Using this bound in \eqref{e:before-gronwall} and integrating from $0$ to $T_*$, we find
\[
\frac {c_0} 2 \int_0^{T_*} \sum_{|\beta|=k} [\partial^\beta f]_{H^s_{(\gamma+2s)/2}(\R^d)}^2 \leq C_k,
\]
which allows us to close the induction, since $F_k(t) = \|f(t)\|_{H^{k+s}(\R^d)} \lesssim \|f\|_{H^k(\R^d)} + \sum_{|\beta|=k} [\partial^\beta f]_{H^s(\R^d)}$. 

We have shown that all Sobolev norms $\|f(t)\|_{H^k(\R^d)}$ are uniformly bounded in $t\in [0,T_*)$. By interpolation (Lemma \ref{l:interpolation}), the weighted $H^k_q(\R^d)$ norms of $f$ are also bounded on $[0,T_*)$, for all $k,q>0$.  In particular, we can apply short-time existence (Theorem \ref{t:short-time-existence}) with initial data $f(T_*,\cdot)$ to extend our solution to some time interval $[0,T']$ with $T'>T_*$, contradicting \eqref{e:contradiction-global}. This completes the proof.
\end{proof}

\section{Local existence}\label{s:short}


This section is devoted to the proof of local existence, Theorems \ref{t:short-time-existence-inhom} and \ref{t:short-time-existence}. The proof uses $L^2$-based energy estimates, but unlike the proof of Theorem \ref{t:main} above, there is no need to obtain an estimate that persists for large $t$, which leads to some simplifications. On the other hand, extra complications arise from the broader range of $\gamma$ and $s$, the dependence on $x$, and the need to commute polynomial weights of the form $\vv^q$. 

The ideas in this section owe a lot to short-time existence results for the Boltzmann equation such as \cite{amuxy2010regularizing, amuxy2011bounded, amuxy2013mild, morimoto2015polynomial, HST2020boltzmann} and especially \cite{henderson2022existence}, which addressed the case of $\gamma$ very close to $-d$. 

We begin with a commutator estimate for polynomially-growing weights:
\begin{lemma}\label{l:commutator}
If $m,q > \gamma+2s+d$, then
\[
\begin{split}
\| \vv^q Q(f,g) - Q(f,\vv^q g)\|_{L^2(\R^d)} \leq C \left( \|f\|_{L^\infty_m(\R^d)}^{1/2} N_{s,\gamma}^f(\vv^q g) + \|f\|_{L^\infty_m(\R^d)} \|g\|_{L^2_{q-2s}(\R^d)}\right).
\end{split}
\]
for a constant $C>0$ depending only on $d$, $\gamma$, $s$, $m$, and $q$.
\end{lemma}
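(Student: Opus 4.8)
The plan is to first exploit the Carleman form \eqref{e:carleman}. Since the reaction term $c_2[f\ast|\cdot|^\gamma]g$ is multiplicative in $g$, it commutes exactly with multiplication by $\vv^q$, and only the diffusion part $Q_1$ survives in the commutator:
\[
\vv^q Q(f,g) - Q(f,\vv^q g) = c_1[f\ast|\cdot|^{\gamma+2s}](v)\,\Psi(v), \qquad \Psi := \vv^q(-\Delta)^sg - (-\Delta)^s(\vv^q g).
\]
So everything reduces to an $L^2_v$ bound for $[f\ast|\cdot|^{\gamma+2s}]\Psi$. I would write $\Psi$ in the singular-integral form that keeps $g$ in the nonlocal slot (so the growth of $\vv^q$ at $|w|=\infty$ creates no divergence),
\[
\Psi(v) = c_{d,s}\,\mathrm{p.v.}\!\int_{\R^d}\frac{\bigl(\langle v+w\rangle^q - \vv^q\bigr)\,g(v+w)}{|w|^{d+2s}}\,\dd w,
\]
and split the $w$-integral at $|w|=\langle v\rangle/2$ into a near part (where the principal value is needed and $\langle v+w\rangle\approx\vv$) and a far part.

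On the near part, $|\langle v+w\rangle^q-\vv^q|\lesssim(|w|/\vv)\vv^q$, so after symmetrizing in $w\mapsto-w$ and splitting $g(v+w)=g(v)+(g(v+w)-g(v))$, the main contribution has the shape $[f\ast|\cdot|^{\gamma+2s}](v)\,\vv^{-s}\bigl(\int|h(v+w)-h(v)|^2|w|^{-d-2s}\dd w\bigr)^{1/2}$ with $h:=\vv^q g$. Taking the $L^2_v$ norm, I would split $[f\ast|\cdot|^{\gamma+2s}]=[f\ast|\cdot|^{\gamma+2s}]^{1/2}\cdot[f\ast|\cdot|^{\gamma+2s}]^{1/2}$, use the convolution estimate of Lemma \ref{l:convolution}(a) (legitimate since $m>\gamma+2s+d$) together with $\gamma<0$ to absorb one half-power via $[f\ast|\cdot|^{\gamma+2s}]^{1/2}\vv^{-s}\lesssim\|f\|_{L^\infty_m}^{1/2}\vv^{\gamma/2}\lesssim\|f\|_{L^\infty_m}^{1/2}$, and recognize what remains as $c_{d,\gamma,s}^{-1}N_{s,\gamma}^f(\vv^q g)^2$; this yields the first term on the right. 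The leftover near-part pieces are lower order: the $g(v)$ piece is $|g(v)|$ against a truncated version of $(-\Delta)^s\vv^q$, which is $\lesssim\vv^{q-2s}$ (cf.\ Lemma \ref{l:q-power}), so $[f\ast|\cdot|^{\gamma+2s}](v)\,\vv^{q-2s}|g(v)|\lesssim\|f\|_{L^\infty_m}\vv^{\gamma+q}|g(v)|\le\|f\|_{L^\infty_m}\vv^{q-2s}|g(v)|$, where the hypothesis $\gamma+2s<0$ is used; and the genuinely error-type pieces (those not expressible through differences of $h$) I would estimate by \emph{not} bounding pointwise in $v$ but keeping the $w$-integral, exchanging the order of integration in the $L^2_v$ norm, and substituting $u=v+w$, on which region $\langle v\rangle\approx\langle u\rangle$, so that integrating out the resulting power of $|w|$ lands on $\|\langle\cdot\rangle^{\gamma+q}g\|_{L^2}\le\|g\|_{L^2_{q-2s}}$, again by $\gamma+2s<0$.

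On the far part no principal value is needed, and I would split $\langle v+w\rangle^q-\vv^q$ into its two summands. The $\langle v+w\rangle^q$ summand turns $\langle v+w\rangle^q g(v+w)$ into $h(v+w)$, and $\int_{|w|>\langle v\rangle/2}|h(v+w)|\,|w|^{-d-2s}\,\dd w$ is handled by a weighted Schur-type estimate: on this region $\langle v+w\rangle\lesssim|w|$ and $|w|\gtrsim\vv$, so distributing $|w|^{-d-2s}$ between $\vv$ and $\langle v+w\rangle$ with a split parameter $a\in(0,d/2)$ gives $\lesssim\vv^{-a}\|g\|_{L^2_{q-2s}}$, and then $[f\ast|\cdot|^{\gamma+2s}](v)\,\vv^{-a}\lesssim\|f\|_{L^\infty_m}\vv^{\gamma+2s-a}$ lies in $L^2_v$ precisely because $\gamma+2s<0$. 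For the $\vv^q$ summand, writing $g(v+w)=\langle v+w\rangle^{-q}h(v+w)$ and then $h(v+w)=(h(v+w)-h(v))+h(v)$, the difference term is controlled by Cauchy--Schwarz against $(\vv/\langle v+w\rangle)^{2q}|w|^{-d-2s}$, which after exchanging the order of integration reproduces a piece of $N_{s,\gamma}^f(\vv^q g)^2$, while the $h(v)=\vv^q g(v)$ term is a pointwise multiple of $|g(v)|$ controlled by $\|g\|_{L^2_{q-2s}}$ using $\gamma+2s<0$ once more.

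The step I expect to be the main obstacle is exactly this far-region interaction between large $|v|$ and the region where $v+w$ lies near $\supp f$ (equivalently near the origin): there the multiplier $\langle v+w\rangle^q-\vv^q$ is as large as $\vv^q$, $|w|\approx\vv$, and one is effectively averaging $g$ over a ball of radius $\sim\vv$, so one must use the decay $[f\ast|\cdot|^{\gamma+2s}]\lesssim\|f\|_{L^\infty_m}\vv^{\gamma+2s}$ to its fullest and route as much of the term as possible through $N_{s,\gamma}^f(\vv^q g)$ rather than through $\|g\|_{L^2_{q-2s}}$. Getting every exponent in this part to balance, using only $\gamma+2s<0$, is where essentially all the work lies; the remaining contributions are routine weighted-convolution and Cauchy--Schwarz bounds.
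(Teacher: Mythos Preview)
Your setup is correct ($Q_2$ commutes, so only $Q_1$ contributes), and you correctly anticipate that the far region, where $|v|$ is large and $v+w$ sits near the origin, is the crux. But your proposed handling of that region does not close, and the paper avoids the near/far split altogether by a factorization you are missing.

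The paper writes
\[
(\langle v+w\rangle^q-\vv^q)\,g(v+w)=h(v+w)\,\bigl[\vv^q\langle v+w\rangle^{-q}-1\bigr],\qquad h:=\vv^q g,
\]
and then adds and subtracts $h(v)$, producing $I_1=\int K_f(v,w)\,[h(v+w)-h(v)]\,[\vv^q\langle v+w\rangle^{-q}-1]\,\dd w$ and $I_2=h(v)\int K_f(v,w)\,[\vv^q\langle v+w\rangle^{-q}-1]\,\dd w$. A single Cauchy--Schwarz in $w$ on $I_1$, with second factor $\int K_f(v,w)\,\vv^{2q}|\langle v+w\rangle^{-q}-\vv^{-q}|^2\,\dd w$ controlled via Lemmas~\ref{l:q-power} and~\ref{l:convolution}(a), yields the $\|f\|_{L^\infty_m}^{1/2}N_{s,\gamma}^f(\vv^q g)$ term directly. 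The $I_2$ integral is $\approx\vv^{2q}g(v)\,(-\Delta)^s(\langle\cdot\rangle^{-q})(v)\lesssim\vv^{q-2s}|g(v)|$, again by Lemma~\ref{l:q-power}. No splitting in $|w|$ is needed anywhere.

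Your far-region argument fails at the ``$-\vv^q$ summand'': the Cauchy--Schwarz you propose leaves the weight $\int_{|w|>\vv/2}(\vv/\langle v+w\rangle)^{2q}|w|^{-d-2s}\,\dd w$, and on the set $\{|v+w|\lesssim 1\}$ the integrand is of size $\vv^{2q-d-2s}$, so the integral is $\gtrsim\vv^{2q-d-2s}$ whenever $q>d/2$. The best available decay $[f\ast|\cdot|^{\gamma+2s}]\lesssim\|f\|_{L^\infty_m}\vv^{\gamma+2s}$ only buys back $\vv^{\gamma+2s}$, which does not compensate once $q$ exceeds roughly $(d-\gamma)/2$; since the lemma must hold for all $q>\gamma+2s+d$, this is a genuine gap. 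The $h(v)$ piece of the same summand has the identical defect. By separating $\langle v+w\rangle^q$ and $-\vv^q$ you discard exactly the cancellation the paper's factorization keeps: working with the \emph{difference} $\langle v+w\rangle^{-q}-\vv^{-q}$ of decaying functions, rather than with $\langle v+w\rangle^{-q}$ alone, is what Lemma~\ref{l:q-power} is designed to exploit.
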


\begin{proof}
It is clear that $\vv^q Q_2(f,g) - Q_2(f,\vv^q g) = 0$, giving 
\[
\begin{split}
\vv^q Q(f,g) - Q(f,\vv^q g) &= \vv^q Q_1(f,g) - Q_1(f,\vv^q g)\\
&= \int_{\R^d} K_f(v,w) g(v+w) [\vv^q - \langle v+w\rangle^q] \dd w\\
&= \int_{\R^d} K_f(v,w) g(v+w)\langle v+w\rangle^q [ \vv^q \langle v+w\rangle^{-q} - 1] \dd w\\
&= \int_{\R^d} K_f(v,w) [g(v+w) \langle v+w\rangle^q - g(v) \vv^q] [ \vv^q \langle v+w\rangle^{-q} - 1] \dd w\\
&\quad + g(v)\vv^q \int_{\R^d} K_f(v,w) [ \vv^q \langle v+w\rangle^{-q} - 1]\dd w\\
&= I_1 + I_2.
\end{split}
\]
This decomposition is similar to one that appears in the proof of \cite[Lemma 2.12]{silvestre2022nearequilibrium}. Taking the $L^2$ norm in $v$ of both terms $I_1$ and $I_2$, we first have, using Cauchy-Schwarz in $w$,
\[
\begin{split}
\int_{\R^d}I_1^2 \dd v &\leq \int_{\R^d} \left(\int_{\R^d}K_f(v,w)  |g(v+w)\langle v+w\rangle^q - g(v) \vv^q|^2 \dd w \right)\\
&\qquad\times \left(\int_{\R^d}  K_f(v,w) |\vv^q \langle v+w\rangle^{-q} - 1|^2 \dd w\right) \dd v.
\end{split}
\]
Since Lemmas \ref{l:convolution}(a) and \ref{l:q-power} imply
\[
\begin{split}
\vv^{2q}  \int_{\R^d} K_f(v,w)|\langle v+w\rangle^{-q} - \vv^{-q}|^2 \dd w &\approx \vv^{2q} [ f \ast |\cdot|^{\gamma+2s}] \int_{\R^d} \frac{|\langle v+w\rangle^{-q} - \vv^{-q}|^2 }{|w|^{d+2s}} \dd w \\
&\lesssim \|f\|_{L^\infty_m(\R^d)}\vv^{2q} \vv^{-2q-2s} \lesssim \|f\|_{L^\infty_m(\R^d)}, 
\end{split}
\]
we have
\[
\int_{\R^d} I_1^2 \dd v \lesssim \|f\|_{L^\infty_m(\R^d)}\int_{\R^d}\int_{\R^d} K_f(v,w)  |g(v+w)\langle v+w\rangle^q - g(v) \vv^q|^2 \dd w \dd v =  \|f\|_{L^\infty_m(\R^d)} N_{s,\gamma}^f\left(\langle \cdot\rangle^q g\right)^2.
\]
For $I_2$, we use Lemma \ref{l:q-power} to write
\[
|I_2| \leq |g(v)| \vv^{2q} [f\ast |\cdot|^{\gamma+2s}](v) (-\Delta)^s(\langle v\rangle^{-q}) \leq |g(v)| \vv^{q-2s} [f\ast|\cdot|^{\gamma+2s}](v) \lesssim \|f\|_{L^\infty_m} |g(v)| \vv^{q-2s},
\]
which gives 
\[
\int_{\R^d} I_2^2 \dd v \lesssim \|f\|_{L^\infty_m(\R^d)}^2 \|g\|_{L^2_{q-2s}(\R^d)}^2.
\]
Combining our estimates for $I_1$ and $I_2$, the proof is complete.
\end{proof}

The key lemma for local existence is the following energy estimate for a modified linear version of our equation. The purpose of the parameter $\sigma$ is to interpolate between the linear isotropic Boltzmann equation and the heat equation on $\R^{1+6}$, so that we can apply the method of continuity below in the proof of Lemma \ref{l:linear-existence}.

\begin{lemma}\label{l:energy-estimate}
Let $k\geq 2d+2$, $q \geq m > \gamma+2s+d$, $T>0$, and let $f\in L^\infty([0,T],H^k_m(\T^d\times\R^d))$ be fixed. For any $\sigma \in [0,1]$ and $R\in L^2([0,T], L^2_q(\T^d\times\R^d))$, let $g_\sigma$ be a solution of
\begin{equation}\label{e:sigma}
\partial_t g_\sigma +\sigma v\cdot\nabla_x g_\sigma = \sigma Q(f, g_\sigma) + (1-\sigma) \Delta_{x,v} g_\sigma + R,
\end{equation}
on the time interval $[0,T]$. Then the following estimate holds:
\[
\begin{split}
\|g_\sigma(t)\|_{H^k_q(\T^d \times \R^{d})} \leq &\left(\|g_\sigma(0)\|_{H^k_q(\T^d\times \R^d)} + C\int_0^t \|R(t')\|_{L^2_q(\T^d\times\R^d)}^2\dd t'\right)\\
&\qquad \times \exp\left( C\int_0^t (1+ \|f(t')\|_{H^k_m(\T^d\times\R^{d})}) \dd t' \right), \quad 0\leq t\leq T,
\end{split}
\]
where $C$ depends only on $k$, $q$, $m$, $d$, $\gamma$, and $s$. In particular, $C$ is independent of $\sigma$. 
\end{lemma}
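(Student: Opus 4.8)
The plan is to establish a weighted $L^2$-based energy inequality, in the spirit of energy estimates for the linearized Boltzmann equation, and then to invoke Grönwall's inequality. Set
\[
\mathcal E(t) := \|g_\sigma(t)\|_{H^k_q(\T^d\times\R^d)}^2 = \sum_{|\alpha|\le k}\|\vv^q\partial^\alpha g_\sigma(t)\|_{L^2(\T^d\times\R^d)}^2,
\]
where $\alpha$ ranges over all multi-indices in the $(x,v)$ variables of order at most $k$. For each such $\alpha$ I would apply $\partial^\alpha$ to \eqref{e:sigma}, multiply by $\vv^{2q}\partial^\alpha g_\sigma$, integrate over $\T^d\times\R^d$, and sum, aiming at a differential inequality
\[
\frac{d}{dt}\mathcal E(t) \le C\,(1+\|f(t)\|_{H^k_m(\T^d\times\R^d)})\,\mathcal E(t) + C\,\|R(t)\|_{L^2_q(\T^d\times\R^d)}^2,
\]
with $C$ depending only on $k,q,m,d,\gamma,s$. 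Uniformity in $\sigma$ is automatic: the transport and collision terms carry the prefactor $\sigma\le 1$, the Laplacian the prefactor $1-\sigma\le 1$, and every constant produced below is $\sigma$-free. No boundary terms appear when integrating by parts in $x$ since $x\in\T^d$; to make the computation rigorous one first runs it on mollified (or Galerkin) approximations of $g_\sigma$ and passes to the limit.

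\textbf{Transport, diffusion, and source.} These are routine. After integrating by parts in $x$, the transport term leaves only $-\sigma\int\vv^{2q}\partial^\alpha g_\sigma\,[\partial^\alpha,v\cdot\nabla_x]g_\sigma$; since $\vv^q$ commutes with $v\cdot\nabla_x$ and $[\partial^\alpha,v\cdot\nabla_x]g_\sigma$ is a sum of derivatives of $g_\sigma$ of order $\le k$, Cauchy--Schwarz bounds this by $C\mathcal E$. The diffusion term, after one integration by parts, equals $-(1-\sigma)\|\vv^q\nabla_{x,v}\partial^\alpha g_\sigma\|_{L^2}^2$ plus a weight commutator which, using $|\nabla_v\vv^{2q}|\lesssim\vv^{2q-1}$ and Young's inequality, is $\le (1-\sigma)(\tfrac14\|\vv^q\nabla_v\partial^\alpha g_\sigma\|_{L^2}^2 + C\mathcal E)$ and is absorbed into the good negative term. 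The source term contributes $2\int\vv^{2q}\partial^\alpha g_\sigma\,\partial^\alpha R$, which Cauchy--Schwarz and Young bound by $C\mathcal E(t) + C\|R(t)\|_{L^2_q(\T^d\times\R^d)}^2$.

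\textbf{The collision term.} This is where the real work lies. Since $Q$ is bilinear and acts only in $v$, the Leibniz rule $\partial^\beta Q(F,G)=\sum_{\beta'+\beta''=\beta}\binom{\beta}{\beta'}Q(\partial^{\beta'}F,\partial^{\beta''}G)$ holds for all $x$- and $v$-derivatives, so it suffices to control $\int\vv^q\partial^\alpha g_\sigma\cdot\vv^q Q(\partial^{\beta'}f,\partial^{\beta''}g_\sigma)$ for each $\beta'+\beta''=\alpha$. For every such pair I would first use the commutator estimate of Lemma \ref{l:commutator} (applied slicewise in $x$ and integrated) to write $\vv^q Q(F,G)=Q(F,\vv^q G)+\mathcal C_{F,G}$ with $\|\mathcal C_{F,G}\|_{L^2_v}\lesssim\|F\|_{L^\infty_m}^{1/2}N_{s,\gamma}^F(\vv^q G)+\|F\|_{L^\infty_m}\|G\|_{L^2_{q-2s}}$. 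The leading term $\beta'=0$, $\beta''=\alpha$ is handled by Lemma \ref{l:good-sign}: its $Q_1$ part is $\le -N_{s,\gamma}^f(\vv^q\partial^\alpha g_\sigma)^2+C\|f\|_{L^\infty_q}\|\vv^q\partial^\alpha g_\sigma\|_{L^2}^2$, and its $Q_2$ part, being $\int[f\ast|\cdot|^\gamma](\vv^q\partial^\alpha g_\sigma)^2$, is $\lesssim\|f\|_{L^\infty_m}\mathcal E$ by the convolution bound Lemma \ref{l:convolution}(a). The dissipative quantity $-N_{s,\gamma}^f(\vv^q\partial^\alpha g_\sigma)^2$ must be kept and used, via Young's inequality, to absorb both the $N_{s,\gamma}^f(\vv^q\partial^\alpha g_\sigma)$ occurring in the commutator remainder $\mathcal C_{f,\partial^\alpha g_\sigma}$ and the top-order pieces of the remaining Leibniz terms; crucially, since $f$ here is a general element of $H^k_m$ and need not satisfy the hypotheses of Lemma \ref{l:coercive}, only this $-N_{s,\gamma}^f$ bound (not the stronger $-c_0[\cdot]_{H^s_{(\gamma+2s)/2}}^2$) is available, so every fractional-order error must be arranged to carry a compensating $N_{s,\gamma}^f$-factor. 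For the intermediate decompositions $1\le|\beta'|\le|\alpha|-1$ I would split by order à la Moser: whichever of $\partial^{\beta'}f$, $\partial^{\beta''}g_\sigma$ has at most $\lfloor k/2\rfloor$ derivatives goes into a weighted $L^\infty$ space (Sobolev embedding on the $2d$-dimensional domain, for which $k\ge 2d+2$ leaves room), the other into weighted $L^2$, and then the convolution estimates of Lemma \ref{l:convolution} together with the interpolation inequality of Lemma \ref{l:interpolation} trade polynomial weight on the $L^2$ factor for the fractional regularity coming from the $(-\Delta)^s$ in $Q_1$; when $\gamma$ or $\gamma+2s$ is too negative for Lemma \ref{l:convolution}(b) --- possible in the full range $\gamma+2s<0$ treated here --- one first transfers derivatives off the kernel via $|v|^\mu=(d+\mu)^{-1}\nabla\cdot(v|v|^\mu)$, exactly as in the proof of Theorem \ref{t:main}, iterating as needed (which is legitimate since $k$ is large). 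Finally, the extreme decomposition $\beta'=\alpha$, $\beta''=0$ --- all derivatives on $f$ --- is the only one where $\partial^\alpha f$ is controlled merely in $L^2$, so Lemma \ref{l:commutator} cannot be applied with $\partial^\alpha f$ in $L^\infty_m$; here one estimates $Q(\partial^\alpha f,\vv^q g_\sigma)$ directly, transferring enough derivatives from the kernels $|\cdot|^{\gamma+2s}$ and $|\cdot|^\gamma$ onto $\partial^\alpha f$ and placing $g_\sigma$ and its low-order derivatives in $L^\infty$.

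\textbf{Conclusion and main obstacle.} Collecting these estimates, every Leibniz term is $\le \tfrac14 N_{s,\gamma}^f(\vv^q\partial^\alpha g_\sigma)^2 + C(1+\|f\|_{H^k_m})\mathcal E$, the fractional piece being absorbed into the $-N_{s,\gamma}^f(\vv^q\partial^\alpha g_\sigma)^2$ supplied by the leading term; summing over $|\alpha|\le k$ and over the transport, diffusion, collision and source contributions produces the differential inequality above, and Grönwall's inequality (followed by a square root) yields the stated bound. I expect the main obstacle to be precisely this collision bookkeeping: verifying that the single dissipation term $N_{s,\gamma}^f(\vv^q\partial^\alpha g_\sigma)^2$ --- all that is available, because $f$ is not assumed coercive --- simultaneously dominates the weight-commutator remainder and the highest-order part of each intermediate Leibniz term, while handling the very negative $\gamma$ and $\gamma+2s$ allowed here through repeated derivative transfer onto the power-law kernels, all with the polynomial weights tracked throughout.
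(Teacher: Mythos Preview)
Your overall scaffolding---differentiate, multiply by $\vv^{2q}\partial^\alpha g_\sigma$, integrate, handle transport/diffusion/source by routine integrations by parts, then Gr\"onwall---matches the paper, as does your treatment of the top term $\beta'=0$ via Lemma~\ref{l:good-sign} plus the commutator Lemma~\ref{l:commutator}. There is, however, a genuine gap in your intermediate Leibniz terms, specifically the case $|\beta'|=1$, $|\beta''|=k-1$ when $s>\tfrac12$. Your Moser split puts $\partial^{\beta'}f$ in $L^\infty_m$ and leaves $(-\Delta)^s(\vv^q\partial^{\beta''}g_\sigma)$ in $L^2$, which is a norm at the level of $H^{k-1+2s}_q(g_\sigma)>H^k_q(g_\sigma)$. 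You propose to absorb this excess by the dissipation $-N_{s,\gamma}^f(\vv^q\partial^\alpha g_\sigma)^2$, but that cannot work: the error term carries the weight $[\partial^{\beta'}f\ast|\cdot|^{\gamma+2s}]$, not $[f\ast|\cdot|^{\gamma+2s}]$, and since $f$ has no lower bound here, $N_{s,\gamma}^f$ controls only the $[f\ast|\cdot|^{\gamma+2s}]$-weighted $H^s$ seminorm, not the unweighted one. There is simply no way to ``arrange'' for the error to carry an $N_{s,\gamma}^f$ factor.

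The paper resolves this case by a symmetrization trick that avoids the dissipation altogether: since $\partial^\alpha g_\sigma=\partial_{i}\partial^{\beta''}g_\sigma$ for some first-order $\partial_i$, one integrates that $\partial_i$ by parts across the product, then expands using the fractional product rule $(-\Delta)^s(\zeta\eta)=\zeta(-\Delta)^s\eta+\eta(-\Delta)^s\zeta-E(\zeta,\eta)$ with $E(\zeta,\eta)=c\int|w|^{-d-2s}(\zeta(v{+}w)-\zeta(v))(\eta(v{+}w)-\eta(v))\,dw$. After this manipulation the worst term reproduces $-J_1$ itself (so one solves for $J_1$), and every remaining piece is bounded directly by $C\|f\|_{H^k_m}\|g_\sigma\|_{H^k_q}^2$ using only Lemma~\ref{l:convolution}, Sobolev embedding, and the H\"older-type bound on $E$. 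In particular the paper never uses $-N_{s,\gamma}^f$ to absorb anything beyond the $\beta'=0$ commutator; all other Leibniz terms are closed in $H^k_q$ outright. (A smaller issue: in your extreme case $\beta'=\alpha$ you speak of transferring derivatives from the kernels onto $\partial^\alpha f$, which would raise its order beyond $k$; the paper instead keeps $\partial^{\beta'}f$ in $L^2_m$ via Lemma~\ref{l:convolution}(b) or Lemma~\ref{l:young2}, and puts the low-order $\partial^{\beta''}g_\sigma$ into $L^\infty$ by Sobolev.)
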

\begin{proof}
For every multi-index $\beta = (\beta_1,\ldots,\beta_{2d})$ in $(x,v)$ variables with $|\beta| = \sum_i \beta_i = k$, we differentiate \eqref{e:sigma} by $\partial^\beta$, multiply by $\vv^{2q}\partial^\beta g_\sigma$, and integrate over $\T^d_x\times\R^{d}_v$ to obtain
\begin{equation}\label{e:energy-est}
\begin{split}
\frac 1 2 \frac d {dt} \int_{\T^d \times \R^{d}} &\vv^{2q} (\partial^\beta g_\sigma)^2 \dd v \dd x 
 + \sigma \int_{\T^d\times \R^d} \vv^{2q} \partial^\beta g_\sigma \partial^\beta[v\cdot \nabla_x g_\sigma] \dd v \dd x \\
&= \sigma \int_{\T^d \times \R^{d}} \vv^{2q} \partial^\beta g_\sigma \partial^\beta Q(f,g_\sigma) \dd v \dd x + (1-\sigma)\int_{\T^d \times \R^{d}} \vv^{2q} \partial^\beta g_\sigma \Delta_{x,v} \partial^\beta g_\sigma \dd v \dd x\\
&\quad + \int_{\T^d\times\R^d} \vv^{2q} \partial^\beta g_\sigma R \dd v \dd x.
\end{split}
\end{equation}
The last two terms are handled easily:
\[
\int_{\T^d\times\R^d} \vv^{2q} \partial^\beta g_\sigma R \dd v \dd x \leq \|g_\sigma\|_{H^k_q(\T^d\times\R^d)} \|R\|_{L^2_q(\T^d\times\R^d)}\lesssim \|g_{\sigma}\|_{H^k_q(\T^d\times\R^d)}^2 + \|R\|_{L^2_q(\T^d\times\R^d)}^2,
\]
and
\begin{equation}\label{e:last-term}
\begin{split}
\int_{\T^d\times\R^d} \vv^{2q} \partial^\beta g_\sigma \Delta_{x,v} \partial^\beta g_\sigma \dd v \dd x &= -\int_{\T^d\times\R^d} \vv^{2q} |\nabla_{x,v} \partial^\beta g_\sigma|^2 \dd v \dd x\\
&\quad - 2q \int_{\T^d\times\R^d} \partial^\beta g_\sigma \vv^{2q-2} v\cdot \nabla_v \partial^\beta g_\sigma \dd v \dd x\\
&\leq -q \int_{\T^d\times\R^d} \vv^{2q-2} v\cdot \nabla_v (\partial^\beta g_\sigma)^2 \dd v \dd x\\
&= C_{d,q} \int_{\T^d\times\R^d} \vv^{2q-2} (\partial^\beta g_{\sigma})^2 \dd v \dd x\\
&\leq C_{d,q} \|g_\sigma\|_{H^k_q(\T^d\times\R^d)}^2,
\end{split}
\end{equation}
since $|\beta|\leq k$. 

For the second term on the left in \eqref{e:energy-est}, we write $\partial^\beta = \partial^\beta_x\partial^\beta_v$, for multi-indices $\beta_x, \beta_v \in \N^d$. With the notation $\beta_v = (\beta_{v,1},\ldots,\beta_{v,d})$ and $e_i = (0,\ldots,1,\ldots,0)$, we have $\partial^\beta[v\cdot\nabla_x g_\sigma] = v\cdot \nabla_x\partial^\beta g_\sigma + \sum_{i=1}^d \beta_{v,i} \partial_{x_i} \partial^{\beta_x} \partial^{\beta_v - e_i} g_\sigma$, which implies
\[
\begin{split}
\left|\int_{\T^d\times \R^d} \vv^{2q} \partial^\beta g_\sigma \partial^\beta[v\cdot\nabla_x g_\sigma] \dd v \dd x \right| &= \left|\int_{\T^d\times \R^d} \vv^{2q} \partial^\beta g_\sigma v\cdot\nabla_x \partial^\beta g_\sigma \dd v \dd x\right.\\
&\quad \left.+ \sum_{i=1}^d \beta_{v,i} \int_{\T^d\times \R^d} \vv^{2q} \partial^\beta g_\sigma \partial_{x_i}\partial^{\beta_x}\partial^{\beta_v - e_i} g_\sigma \dd v \dd x\right|.
\end{split}
\]
The first term on the right is zero after integrating by parts in $x$. For the second term, Cauchy-Schwarz gives an upper bound of
\[
\lesssim \sum_{i=1}^d \|\vv^q \partial^\beta g_\sigma \|_{L^2(\T^d\times\R^d)} \|\vv^q \partial_{x_i}\partial^{\beta_x}\partial^{\beta_v - e_i} g_\sigma\|_{L^2(\T^d\times\R^d)} \lesssim d \|g_\sigma\|_{H^k_q(\T^d\times\R^d)}^2.
\]

It remains to estimate the first integral on the right in \eqref{e:energy-est}. As above, we use the Liebnitz rule $\partial Q(F,G) = Q(\partial F, G) + Q(F, \partial G)$ for $x$ or $v$ derivatives to write
\begin{equation*}
 \int_{\T^d \times \R^{d}} \vv^{2q} \partial^\beta g_\sigma \partial^\beta Q(f,g_\sigma) \dd v \dd x = \sum_{\beta' + \beta'' = \beta} \int_{\T^d\times \R^d} \vv^{2q} \partial^\beta g_\sigma Q(\partial^{\beta'} f, \partial^{\beta''} g_\sigma) \dd v \dd x.
\end{equation*}
For each pair $\beta', \beta''$, we write



\begin{equation}\label{e:J123}
\begin{split}
&\int_{\T^d \times\R^d} \vv^{2q}  \partial^\beta g_\sigma Q(\partial^{\beta'} f, \partial^{\beta''} g_\sigma) \dd v \dd x = J_1 + J_2 + J_3,
\end{split}
\end{equation}
with
\[
\begin{split}
J_1 &= 
\int_{\T^d \times\R^d} \vv^q \partial^\beta g_\sigma Q_1(\partial^{\beta'} f,\vv^q\partial^{\beta''} g_\sigma) \dd v \dd x,\\
J_2 &= 
 \int_{\T^d \times\R^d} \vv^q \partial^\beta g_\sigma Q_2(\partial^{\beta'} f,\vv^q\partial^{\beta''} g_\sigma) \dd v \dd x,\\
J_3 &= 
 \int_{\T^d \times\R^d} \vv^q \partial^\beta g_\sigma \left( \vv^q Q(\partial^{\beta'} f, \partial^{\beta''} g_\sigma) - Q(\partial^{\beta'} f, \vv^q \partial^{\beta''} g_\sigma\right) \dd v \dd x.
\end{split}
\]
The analysis of all three terms splits into cases depending on how the derivatives fall. 

We begin with the $Q_2$ terms because they are the simplest. Since $k\geq 2d+2$ and $|\beta'|+|\beta''|\leq k$, we must have either $|\beta'| \leq k - d - 1$ or $|\beta''|\leq k - d -2$. If $|\beta'|\leq k - d-1$, then Cauchy-Schwarz and the convolution estimate of Lemma \ref{l:convolution}(a) (since $m>\gamma +d$) yield
\[
\begin{split}
J_2 &\approx \int_{\T^d \times\R^d} \vv^q \partial^\beta g_\sigma [\partial^{\beta'} f \ast |\cdot|^\gamma] \vv^q \partial^{\beta''} g_\sigma \dd v \dd x \\
&\leq 
\|\vv^q \partial^\beta g_\sigma\|_{L^2(\T^d\times\R^d)} \| \partial^{\beta'} f\ast |\cdot|^\gamma\|_{L^\infty(\T^d\times\R^d)} \|\vv^q \partial^{\beta''} g_\sigma\|_{L^2(\T^d\times\R^d)}\\
&\leq \|\partial^{\beta'} f\|_{L^\infty_m(\T^d\times\R^d)} \|g_\sigma\|_{H^k_q(\T^d\times\R^d)}^2\\
&\leq 
\|\partial^{\beta'} f\|_{H^{d+1}_m(\T^d\times\R^d)} \|g_\sigma\|_{H^k_q(\T^d\times\R^d)}^2\\
&\leq \|f\|_{H^k_m(\T^d\times\R^d)}\|g_\sigma\|_{H^k_q(\T^d\times\R^d)}^2,
\end{split}
\]
using the Sobolev embedding $H^{d+1}(\T^d\times\R^d) \subset L^\infty(\T^d\times\R^d)$. 

If $|\beta''|\leq k- d - 2$, we proceed differently depending on $\gamma$. If $\gamma > -d/2$, we apply Lemma  \ref{l:convolution}(b) to $\partial^{\beta'} f$, since $m > d/2+\gamma$:
\[
\begin{split}
J_2
&\leq
C\int_{\T^d}\|\vv^q \partial^\beta g_\sigma\|_{L^2(\R^d)} \| \partial^{\beta'} f\ast |\cdot|^\gamma\|_{L^\infty(\R^d)} \|\vv^q \partial^{\beta''} g_\sigma\|_{L^2(\R^d)} \dd x\\
&\leq C \int_{\T^d} \|\vv^q \partial^\beta g_\sigma\|_{L^2(\R^d)} \| \partial^{\beta'} f\|_{L^2_m(\R^d)} \|\vv^q \partial^{\beta''} g_\sigma\|_{L^2(\R^d)} \dd x\\
&\leq C \|\vv^q \partial^\beta g_\sigma\|_{L^2(\T^d\times\R^d)} \|\partial^{\beta'} f\|_{L^2_m(\T^d\times\R^d)} \|\vv^q \partial^{\beta''} g_\sigma\|_{L^\infty_x L^2_v(\T^d\times\R^d)}\\
&\leq C \|g_\sigma\|_{H^k_q(\T^d\times\R^d)} \|\partial^{\beta'} f \|_{L^2_m(\T^d\times\R^d)} \|\vv^q \partial^{\beta''} g_\sigma\|_{H^{\lceil (d+1)/2\rceil}_x L^2_v(\T^d\times\R^d)}\\
&\leq C\|f\|_{H^k_m(\T^d\times\R^d)}\|g_\sigma\|_{H^k_q(\T^d\times\R^d)}^2.
\end{split}
\]
where we used the Sobolev embedding $H^{\lceil (d+1)/2\rceil}_x(\T^d)\subset L^\infty_x(\T^d)$. Here, $\lceil x \rceil$ denotes the smallest integer $\geq x$.  Note that $|\beta''| + \lceil (d+1)/2\rceil \leq k-d/2-1 < k$. 

If $\gamma \leq -d/2$, we apply the $L^2$ convolution estimate of Lemma \ref{l:young2} to $\partial^{\beta'} f$ with $\theta = m >\gamma + d$: 
\[
\begin{split}
J_2 
&\leq 
C\|\vv^{q} \partial^{\beta''} g_\sigma\|_{L^\infty(\T^d\times\R^d)} \int_{\T^d} \|\vv^q \partial^\beta g_\sigma\|_{L^2(\R^d)} \| \partial^{\beta'} f\ast |\cdot|^\gamma\|_{L^2(\R^d)} \dd x\\
&\leq C \|\partial^{\beta''} g_\sigma\|_{H^{d+1}_{q}(\T^d\times\R^d)} \int_{\T^d} \|\vv^q \partial^{\beta} g_\sigma\|_{L^2(\R^d)} \|\partial^{\beta'} f\|_{L^2_m(\R^d)} \dd x\\
&\leq C \|g_\sigma\|_{H^k_q(\T^d\times\R^d)}^2 \|\partial^{\beta'} f\|_{L^2_m(\T^d\times\R^d)},\\
\end{split}
\]
where we have applied the Sobolev embedding in $\T^d\times\R^d$ to $\partial^{\beta''} g_\sigma$. 

To address the terms $J_1$ and $J_3$ in \eqref{e:J123}, there are four cases:

\medskip

\noindent {\it Case 1: $\beta' = (0,\ldots,0)$, $\beta'' = \beta$.} When all derivatives fall on $g_\sigma$, we use the coercivity estimate of Lemma \ref{l:good-sign}, with exponent $m > \gamma+2s + d$:
\[
\begin{split}
J_1 &= \int_{\T^d\times\R^d} \vv^q \partial^\beta g_\sigma Q_1(f, \vv^q\partial^\beta g_\sigma)  \dd v \dd x\\
&\leq - \int_{\T^d} N_{s,\gamma}^f(\vv^q\partial^\beta g_\sigma)^2 \dd x + C\int_{\T^d} \|\vv^q \partial^\beta g_{\sigma}\|_{L^2(\R^d)}^2 \|f\|_{L^\infty_m(\R^d)} \dd x\\
&\leq - \int_{\T^d} N_{s,\gamma}^f(\vv^q\partial^\beta g_\sigma)^2 \dd x + C\|\vv^q \partial^\beta g_{\sigma}\|_{L^2(\T^d\times\R^d)}^2 \|f\|_{L^\infty_m(\T^d\times\R^d)}.
\end{split}
\]
For $J_3$, we use the commutator estimate of Lemma \ref{l:commutator}:
\[
\begin{split}
J_3 &= \int_{\T^d\times \R^d} \vv^{q} \partial^\beta g_\sigma [ \vv^q Q(f,\partial^\beta g_\sigma) - Q(f,\vv^q\partial^\beta g_\sigma) \dd v \dd x\\
&\leq C\|\vv^q \partial^\beta g_\sigma\|_{L^2(\T^d\times\R^d)} \left(\|f\|_{L^\infty_m(\T^d\times\R^d)}^{1/2} \int_{\T^d} N_{s,\gamma}^f(\vv^q \partial^{\beta} g)\dd x\right.\\
&\qquad \qquad \qquad \left. + \|f\|_{L^\infty_m(\T^d\times\R^d)} \|\partial^{\beta} g\|_{L^2_{q-2s}(\T^d\times\R^d)}\right)\\
&\leq \int_{\T^d} N_{s,\gamma}^f (\vv^q\partial^\beta g_\sigma)^2 \dd x + C\|f\|_{L^\infty_m(\T^d\times\R^d)}\|\partial^\beta g_\sigma\|_{L^2_q(\T^d\times\R^d)}^2,
\end{split}
\]
after using Young's inequality. Adding $J_1$ and $J_3$, the $N_{s,\gamma}^f(\vv^q \partial^\beta g_\sigma)$ terms cancel, and applying the Sobolev inequality to $f$ (since $k > d+1$), we have
\[
J_1+J_3 \leq C \|g_\sigma\|_{H^k_q(\T^d\times\R^d)}^2 \|f\|_{H^k_m(\T^d\times\R^d)}.
\]

\medskip

\noindent {\it Case 2: $|\beta'| = 1, |\beta''| \leq k-1$.} We consider the worst subcase $|\beta''| = k-1$ since the other subcases can be handled by the same method. 

In $J_1$, a naive estimate of $Q_1(\partial^{\beta'} f, \vv^q \partial^{\beta''} g_\sigma)$ would place $2s$ derivatives on $\partial^{\beta''} g_\sigma$, giving a term that cannot be bounded by the $H^k_q$ norm of $g_\sigma$ when $s>1/2$, since $|\beta''|+2s>k$. To get around this issue, we use a symmetrization method\footnote{A related symmetrization method was applied in \cite[Proposition 3.1(iv)]{HST2020boltzmann} and \cite[Proposition 2.9]{henderson2022existence} to solve the same issue in the context of the Boltzmann equation.} that exploits the fact that $\partial^{\beta} g_\sigma = \partial \partial^{\beta''} g_\sigma$ for some partial derivative $\partial$ in either $x$ or $v$. Let us assume $\partial = \partial_{v_i}$ for some $i$, since the case $\partial = \partial_{x_i}$ is simpler as $x$ derivatives commute with $\vv^q$. 

Integrating by parts in $v_i$, we have
\begin{equation}\label{e:J1-symmetry}
\begin{split}
\frac 1 {c_{d,\gamma,s}}J_1 &= \int_{\T^d\times\R^d} \vv^q \partial_{v_i} (\partial^{\beta''} g_\sigma) [f\ast |\cdot|^{\gamma+2s}] (-\Delta)^s \partial^{\beta''} g_\sigma \dd v \dd x\\
&= - \int_{\T^d\times\R^d} \vv^q \partial^{\beta''} g_\sigma \partial_{v_i} \left([f\ast |\cdot|^{\gamma+2s}] (-\Delta)^s \partial^{\beta''} g_\sigma \right)\dd v \dd x\\
&\quad  -\int_{\T^d\times\R^d} q\vv^{q-2} v_i \partial^{\beta''} g_\sigma [f\ast |\cdot|^{\gamma+2s}] (-\Delta)^s \partial^{\beta''} g_\sigma \dd v \dd x\\
&= : J_{1,1} + J_{1,2}.
\end{split}
\end{equation}
To bound $J_{1,1}$, we introduce the abbreviations $F = \partial^{\beta'} f \ast |\cdot|^{\gamma+2s}$ and $G = \vv^q \partial^{\beta''} g_\sigma$ and write
\[
\begin{split}
J_{1,1} &= -\int_{\T^d\times\R^d}  G \partial_{v_i} F (-\Delta)^s G \dd v \dd x - \int_{\T^d\times\R^d} G F (-\Delta)^s \partial_{v_i} G \dd v \dd x\\
&= -\int_{\T^d\times\R^d} (-\Delta)^s(G\partial_{v_i} F) G \dd v \dd x - \int_{\T^d\times\R^d} (-\Delta)^s(GF) \partial_{v_i} G \dd v \dd x.
\end{split}
\]
To both of these fractional Laplacians, we apply the formula
\[
 (-\Delta)^s (\zeta \eta) = \zeta(-\Delta)^s \eta + \eta(-\Delta)^s \zeta - E(\zeta,\eta),
 \]
 with
 \begin{equation*}
 E(\zeta,\eta) := c_{s,d} \int_{\R^d} \frac{(\zeta(v+w) - \zeta(v))(\eta(v+w)- \eta(v))}{|w|^{d+2s}} \dd w, 
 \end{equation*}
and obtain
\begin{equation}\label{e:six-terms}
\begin{split}
J_{1,1} &= -\int_{\T^d\times\R^d} G^2 (-\Delta)^s(\partial_{v_i} F) \dd v \dd x - \int_{\T^d\times\R^d} \partial_{v_i} F G (-\Delta)^s G \dd v \dd x + \int_{\T^d\times\R^d} G E(G, \partial_{v_i} F) \dd v\dd x\\
&\quad - \int_{\T^d\times\R^d} F \partial_{v_i} G (-\Delta)^s G \dd v \dd x - \int_{\T^d\times \R^d} G \partial_{v_i} G (-\Delta)^s F \dd v \dd x+\int_{\T^d\times\R^d} \partial_{v_i} G E(F,G) \dd v \dd x.
\end{split}
\end{equation}
Taking the terms in this expression one-by-one, for the first term we use
\[
(-\Delta)^s (\partial_{v_i} F) = (-\Delta)^s (\partial_{v_i}\partial^{\beta'} f \ast |\cdot|^{\gamma+2s}) \approx \partial_{v_i}\partial^{\beta'} f \ast |\cdot|^\gamma,
\] 
which follows from a calculation that places $(-\Delta)^s$ onto the convolution kernel, similar to the argument in \eqref{e:Q2der}. Combining this with Lemma \ref{l:convolution}(a) and Sobolev embedding in $\T^d\times\R^d$, we have
\[
\begin{split}
\int_{\T^d\times\R^d} G^2 (-\Delta)^s [\partial_{v_i} \partial^{\beta'} f \ast |\cdot|^{\gamma+2s}] \dd v\dd x &\lesssim \|G\|_{L^2(\T^d\times\R^d)}^2 \|\partial_{v_i}\partial^{\beta'} f\|_{L^\infty_m(\T^d\times\R^d)}\\
&\lesssim \|g_\sigma\|_{H^k_q(\T^d\times\R^d)}^2 \| f\|_{H^k_m(\T^d\times\R^d)},
\end{split}
\]
since $m> \gamma+d$ and $|\beta'|+1+(d+1) = d+3 \leq k$. For the second term in \eqref{e:six-terms}, we use Lemma \ref{l:good-sign} and inequality \eqref{e:Nbound}:
\[
\begin{split}
\int_{\T^d\times\R^d} \partial_{v_i} F G(-\Delta)^s G \dd v \dd x &= \int_{\T^d\times\R^d} G Q_1(\partial_{v_i}\partial^{\beta'} f, G) \dd v \dd x\\
&\leq \int_{\T^d}\left[\left| N_{s,\gamma}^{\partial_{v_i} \partial^{\beta'} f}(G)\right|^2  + C\|G\|_{L^2(\R^d)}^2 \|\partial_{v_i} \partial^{\beta'} f\|_{L^\infty_m(\R^d)}\right] \dd x\\
&\lesssim \int_{\T^d}  \|\partial_{v_i} \partial^{\beta'} f\|_{L^\infty_m(\R^d)} \left( \|G\|_{H^s(\R^d)}^2 + \|G\|_{L^2(\R^d)}^2\right) \dd x\\
&\lesssim \|\partial_{v_i} \partial^{\beta'} f\|_{L^\infty_m(\T^d\times\R^d)} \|G\|_{L^2_x H^s_v(\T^d\times\R^d)}^2\\
&\lesssim \|f\|_{H^k_m(\T^d\times\R^d)} \|g_\sigma\|_{H^k_q(\T^d\times\R^d)}^2,
\end{split}
\]
applying Sobolev embedding to $\partial_{v_i} \partial^{\beta'} f$ as above. For the third term in \eqref{e:six-terms}, we use the definition of $E(\zeta,\eta)$ to write
\begin{equation}\label{e:third-term}
\begin{split}
\int_{\T^d\times\R^d} G E(G,\partial_{v_i} F) \dd v \dd x &\lesssim \int_{\T^d\times \R^d} G \left(\int_{\R^d} \frac{|G(x,v+w) - G(x,v)|^2}{|w|^{d+2s}}\dd w\right)^{1/2}\\
&\qquad\qquad \times\left(\int_{\R^d} \frac{|\partial_{v_i} F(x,v+w) - \partial_{v_i}F(x,v)|^2}{|w|^{d+2s}}\dd w\right)^{1/2} \dd v \dd x.
\end{split}
\end{equation}
We estimate the $w$ integral involving $\partial_{v_i} F$ in two pieces. For some small $\eps>0$ such that $s+\eps<1$, we first have
\[
\begin{split}
\int_{B_1}  &\frac{|\partial_{v_i} F(x,v+w) - \partial_{v_i}F(x,v)|^2}{|w|^{d+2s}}\dd w\\
 &= \int_{B_1} |w|^{-d-2s} \left( \int_{\R^d} \partial_{v_i} \partial^{\beta'} f (v_*) [ |v+w-v_*|^{\gamma+2s} - |v-v_*|^{\gamma+2s}] \dd v_*\right)^2 \dd w\\
&\leq \int_{B_1} |w|^{-d-2s} \left( \int_{\R^d} \partial_{v_i} \partial^{\beta'} f (v_*) |v-v_*|^{\gamma+s-\eps} |w|^{s+\eps} \dd v_*\right)^2 \dd w\\
&\leq \int_{B_1} |w|^{-d+2\eps} \dd w \left([\partial_{v_i} \partial^{\beta'} f \ast |\cdot|^{\gamma+s-\eps}](v)\right)^2\\
&\lesssim \|\partial_{v_i} \partial^{\beta'}f\|_{L^\infty_m(\T^d\times\R^d)}^2,
\end{split}
\]
by Lemma \ref{l:convolution}(a). In the third line, we used the H\"older estimate $[|\cdot|^{\gamma+2s}]_{C^{s+\eps}(v)} \lesssim |v|^{\gamma+s-\eps}$, which follows from Lemma \ref{l:q-power}. Next, 
\[
\begin{split}
&\int_{\R^d\setminus B_1}  \frac{|\partial_{v_i} F(x,v+w) - \partial_{v_i}F(x,v)|^2}{|w|^{d+2s}}\dd w\\
 &\quad\leq \|\partial_{v_i}\partial^{\beta'} f\|_{L^\infty_m(\R^d)}^2 \int_{\R^d\setminus B_1} |w|^{-d-2s} \left( \int_{\R^d} \langle v_*\rangle^{-m} [ |v+w-v_*|^{\gamma+2s} - |v-v_*|^{\gamma+2s}] \dd v_*\right)^2 \dd w\\
 &\quad \lesssim \|\partial_{v_i}\partial^{\beta'} f\|_{L^\infty_m(\R^d)}^2\int_{\R^d\setminus B_1} |w|^{-d-2s} \dd w\\
 &\quad \lesssim \|\partial_{v_i}\partial^{\beta'} f\|_{L^\infty_m(\R^d)}^2,
\end{split}
\]
where in the third line, we used Lemma \ref{l:convolution}(a) with $f(v) = \langle v\rangle^{-m}$. Returning to \eqref{e:third-term}, we have
\[
\begin{split}
\int_{\T^d\times\R^d} G E(G,\partial_{v_i} F) \dd v \dd x &\leq \int_{\T^d} \|G\|_{L^2(\R^d)} \|G\|_{H^s(\R^d)} \|\partial_{v_i} \partial^{\beta'} f\|_{L^\infty_m(\R^d)} \dd x\\
&\leq \|G\|_{L^2(\T^d\times\R^d)} \|G\|_{L^2_x H^s_v(\T^d\times\R^d)} \|\partial_{v_i} \partial^{\beta'} f\|_{L^\infty_m(\T^d\times\R^d)}\\
&\leq \|g_\sigma\|_{H^k_q(\T^d\times\R^d)}^2 \|f\|_{H^k_m(\T^d\times\R^d)},
\end{split}
\]
using Sobolev embedding as above, and the inequality $\|G\|_{L^2_x H^s_v(\T^d\times\R^d)} \lesssim \|g_\sigma\|_{H^k_q(\T^d\times\R^d)}$, which can be proven in a straightforward way using interpolation. 

The fourth term in \eqref{e:six-terms} is equal to $-\frac 1 {c_{d,\gamma,s}} J_1$, so we absorb it into the left-hand side of \eqref{e:J1-symmetry}. The fifth term is bounded in a similar manner to the first term: since $(-\Delta)^s F \approx \partial^{\beta'} f \ast |\cdot|^\gamma$,
\[
\begin{split}
\int_{\T^d\times\R^d} G \partial_{v_i} G (-\Delta)^s F \dd v\dd x &\lesssim \|G\|_{L^2(\T^d\times\R^d)} \|\partial_{v_i} G\|_{L^2(\T^d\times\R^d)} \|\partial^{\beta'} f\|_{L^\infty_m(\T^d\times\R^d)}\\
&\lesssim \|g\|_{H^k_q(\T^d\times\R^d)}^2 \| f\|_{H^k_m(\T^d\times\R^d)},
\end{split}
\]
by Lemma \ref{l:convolution}(a) and Sobolev embedding. Finally, the sixth term in \eqref{e:six-terms} is bounded by exactly the same argument as the third term. 

Next, we note that the term $J_{1,2}$ in \eqref{e:J1-symmetry} can be bounded by the same method as $J_{1,1}$ (but simpler) because it has fewer derivatives and a smaller exponent of $\vv$. 
Returning to \eqref{e:J1-symmetry}, we now have
\[
2J_1 \leq C \|g_\sigma\|_{H^k_q(\T^d\times\R^d)}^2 \|f\|_{H^k_m(\T^d\times\R^d)},
\]
as desired.

For the commutator term, Lemma \ref{l:commutator} implies
\[
\begin{split}
J_3
 &\leq  \|\vv^q \partial^\beta g_\sigma\|_{L^2(\T^d\times\R^d)} \|\vv^q Q(\partial^{\beta'} f,\partial^{\beta''} g_\sigma) - Q(\partial^{\beta'} f,\vv^q \partial^{\beta''}g_\sigma)\|_{L^2(\T^d\times \R^d)}\\
&\lesssim  \|\vv^q \partial^\beta g_\sigma\|_{L^2(\T^d\times\R^d)} \left( \|\partial^{\beta'} f\|_{L^\infty_m(\T^d\times\R^d)}^{1/2}\int_{\T^d} N_{s,\gamma}^f(\vv^q \partial^{\beta''} g_\sigma) \dd x\right.\\
&\qquad\qquad \left.+ \|\partial^{\beta'} f\|_{L^\infty_m(\T^d\times\R^d)} \|\partial^{\beta''} g_\sigma\|_{L^2(\T^d\times\R^d)}\right).
\end{split}
\]
Using the upper bound \eqref{e:Nbound} for $N_{s,\gamma}^f(\vv^q\partial^{\beta''} g_\sigma)$, we obtain
\[
\begin{split}
J_3 &\lesssim \|\vv^q \partial^\beta g_\sigma\|_{L^2(\T^d\times\R^d)} \|\partial^{\beta'} f\|_{L^\infty_m(\T^d\times\R^d)}\left( \|\vv^q \partial^{\beta''} g_\sigma\|_{H^s(\R^d)} + \|\vv^q \partial^{\beta''} g_\sigma\|_{L^2(\T^d\times\R^d)}\right)\\
&\lesssim \|\vv^q \partial^\beta g_\sigma\|_{L^2(\T^d\times\R^d)} \|\partial^{\beta'} f\|_{H^{d+1}_m(\T^d\times\R^d)}\left( \|\vv^q \partial^{\beta''} g_\sigma\|_{H^1(\R^d)} + \|\vv^q \partial^{\beta''} g_\sigma\|_{L^2(\T^d\times\R^d)}\right)\\
&\lesssim  \|g_\sigma\|_{H^k_q(\T^d\times\R^d)}^2  \|f\|_{H^k_m(\T^d\times\R^d)},
\end{split}
\]
after using the standard interpolation inequality $\|\cdot\|_{H^s}^2\lesssim \|\cdot\|_{H^1}^2 + \|\cdot\|_{L^2}^2$ and Sobolev embedding for $\partial^{\beta'}f$ (since $|\beta'|+d+1 = d+2 < k$).

\medskip

\noindent {\it Case 3: $2 \leq  |\beta'| \leq k - d - 1$.} 
In this case, there is room to apply $2s$ derivatives to $\partial^{\beta''} g_\sigma$, so we use the inequality 
\[
\|Q_1(F, G)\|_{L^2(\R^d)} \approx \| [F\ast|\cdot|^{\gamma+2s}] (-\Delta)^s G\|_{L^2(\R^d)} \leq C \|F\|_{L^\infty_m(\R^d)} \|G\|_{H^{2s}(\R^d)},
\]
which follows from Lemma \ref{l:convolution}(a) since $m> \gamma+2s+d$. This gives
\[
\begin{split}
J_1 &\leq C\| \vv^q \partial^\beta g_\sigma\|_{L^2(\T^d\times\R^d)} \left(\int_{\T^d} \|\partial^{\beta'} f\|_{L^\infty_m(\R^d)}^2 \| \vv^q \partial^{\beta''} g_\sigma\|_{H^{2s}(\R^d)}^2 \dd x    \right)^{1/2}\\
&\leq C\| \vv^q \partial^\beta g_\sigma\|_{L^2(\T^d\times\R^d)} \|\partial^{\beta'} f \|_{L^\infty_m(\T^d\times\R^d)} \left(\int_{\T^d} \left(\|\vv^q \partial^{\beta''} g_\sigma\|_{H^{2}(\R^d)}^2 + \|\vv^q\partial^{\beta''} g_{\sigma}\|_{L^2(\R^d)}^2\right) \dd x    \right)^{1/2}\\
&\leq C\| \vv^q \partial^\beta g_\sigma\|_{L^2(\T^d\times\R^d)}^2 \|\partial^{\beta'} f \|_{H^{d+1}_m(\T^d\times\R^d)},
\end{split}
\]
by Sobolev embedding and the interpolation inequality $\|\cdot\|_{H^{2s}}^2\lesssim \|\cdot\|_{H^2}^2 + \|\cdot\|_{L^2}^2$. The commutator term is handled exactly as in Case 2, since $|\beta'| + d+1 \leq k$, and we obtain
\[
J_1 + J_3 \leq C\|g_\sigma\|_{H^k_q(\T^d\times\R^d)}^2 \|f\|_{H^k_m(\T^d\times\R^d)}.
\]

\medskip

\noindent {\it Case 4: $k-d \leq |\beta'| \leq k$.} 
By our condition on $k$, we must have $|\beta''| \leq k - d - 3$ in this case. 
As in the analysis of $J_2$ above, we split this case into sub-cases based on $\gamma+2s$. 

Starting with $J_1$, if $\gamma+2s > -d/2$, we apply Lemma \ref{l:convolution}(b), since $m>\gamma+2s+d > \gamma+2s+d/2$:
\[
\begin{split}
J_1 &\approx \int_{\T^d\times\R^d} \vv^q \partial^\beta g_\sigma [\partial^{\beta'} f \ast |\cdot|^{\gamma+2s}] (-\Delta)^s (\vv^q\partial^{\beta''} g_\sigma) \dd v \dd x\\
&\leq \int_{\T^d}  \|\vv^q \partial^\beta g_\sigma\|_{L^2(\R^d)} \|\partial^{\beta'} f\ast |\cdot|^{\gamma+2s}\|_{L^\infty(\R^d)} \|(-\Delta)^s (\vv^q \partial^{\beta''} g_\sigma)\|_{L^2(\R^d)}\dd x \\
&\leq C\int_{\T^d} \|\vv^q \partial^\beta g_\sigma\|_{L^2(\R^d)} \|\partial^{\beta'} f\|_{L^2_m(\R^d)} \|\vv^q \partial^{\beta''} g_\sigma\|_{H^{2s}(\R^d)} \dd x\\
&\leq C\|\vv^q \partial^\beta g_\sigma\|_{L^2(\T^d\times\R^d)} \|\partial^{\beta'}f \|_{L^2_m(\T^d\times\R^d)} \|\vv^q \partial^{\beta''} g_\sigma\|_{L^\infty_x H^{2s}_v(\T^d\times\R^d)}\\
&\leq C \|g_\sigma\|_{H^k_q(\T^d\times\R^d)} \|f\|_{H^k_m(\T^d\times\R^d)},
\end{split}
\]
where we used the Sobolev embedding $H^{\lceil (d+1)/2\rceil}_x(\T^d)\subset L^\infty_x(\T^d)$ and $|\beta''|+2s+\lceil (d+1)/2\rceil \leq k - d/2 - 2 + 2s < k$. 

If $\gamma+2s\leq -d/2$, we apply Lemma \ref{l:young2} to $\partial^{\beta'} f$ and the Sobolev embedding $H^{d+1}(\T^d\times\R^d) \subset L^\infty(\R^d)$ to $\partial^{\beta''} g_\sigma$:
\[
\begin{split}
J_1 
&\lesssim \|(-\Delta)^s (\vv^q \partial^{\beta''} g_\sigma)\|_{L^\infty(\T^d\times \R^d)}\int_{\T^d}  \|\vv^q \partial^\beta g_\sigma\|_{L^2(\R^d)} \|\partial^{\beta'} f\ast |\cdot|^{\gamma+2s}\|_{L^2(\R^d)} \dd x \\
&\leq C\|(-\Delta)^s (\vv^q \partial^{\beta''} g_\sigma)\|_{H^{d+1}(\T^d\times \R^d)} \int_{\T^d} \|\vv^q \partial^\beta g_\sigma\|_{L^2(\R^d)} \|\partial^{\beta'} f\|_{L^2_m(\R^d)}  \dd x\\
&\leq C\|\vv^q \partial^{\beta''} g_\sigma\|_{H^{2s+d+1}(\T^d\times\R^d)} \|\vv^q \partial^{\beta} g_\sigma\|_{L^2(\T^d\times\R^d)} \|\partial^{\beta'}f \|_{L^2_m(\T^d\times\R^d)} \\
&\leq C \|g_\sigma\|_{H^k_q(\T^d\times\R^d)}^2 \|f\|_{H^k_m(\T^d\times\R^d)},
\end{split}
\]
since $|\beta''| + d+1 +2s \leq k-2+2s< k$.

For the commutator term, there are again two sub-sases. If $\gamma+2s \leq -d/2$, we repeat a simple calculation from the proof of Lemma \ref{l:commutator} to write
\begin{equation}\label{e:commutator-step}
\begin{split}
| \vv^q Q(\partial^{\beta'} f,\partial^{\beta''} g_\sigma) - &Q(\partial^{\beta'} f,\vv^q \partial^{\beta''}g_\sigma)|\\
 &\approx  [\partial^{\beta'}f \ast |\cdot|^{\gamma+2s}] \int_{\R^d}\partial^{\beta''}g_\sigma(t,x,v+w)\frac{\vv^q - \langle v+w\rangle^{q}}{|w|^{d+2s}} \dd w.
 \end{split}
 \end{equation}
 By H\"older's inequality and Lemma \ref{l:q-power}, we have
 \[
 \begin{split}
| \vv^q Q(\partial^{\beta'} f,\partial^{\beta''} g_\sigma) - &Q(\partial^{\beta'} f,\vv^q \partial^{\beta''}g_\sigma)|  \\
& \lesssim
\|\partial^{\beta''} g_\sigma\|_{L^\infty_{q}(\R^d)} [\partial^{\beta'} f  \ast |\cdot|^{\gamma+2s}] \vv^q \int_{\R^d}\frac{\langle v+w\rangle^{-q} - \langle v\rangle^{-q} }{|w|^{d+2s}} \dd w \\
&\lesssim
\|\partial^{\beta''} g_\sigma\|_{L^\infty_{q}(\R^d)}\vv^{-2s} [\partial^{\beta'} f \ast |\cdot|^{\gamma+2s}].
\end{split}
\]
Taking the $L^2$ norm in $v$ and applying Lemma \ref{l:young2} with $\theta = m > \gamma+2s+d$, we have, for each fixed $x\in \T^d$, 
\[
\|\vv^q Q(\partial^{\beta'} f,\partial^{\beta''} g_\sigma) - Q(\partial^{\beta'} f,\vv^q \partial^{\beta''}g_\sigma)\|_{L^2(\R^d)} \leq C \|\partial^{\beta''} g_\sigma\|_{L^\infty_{q}(\R^d)}\|\partial^{\beta'} f\|_{L^2_m(\R^d)},
\]
and as a result,
\[
\begin{split}
J_3 &\leq C\|\vv^q\partial^\beta g_\sigma\|_{L^2(\T^d\times\R^d)}\|\partial^{\beta''} g_\sigma\|_{L^\infty_q(\T^d\times\R^d)} \|\partial^{\beta'} f\|_{L^2_m(\T^d\times\R^d)}\\
&\leq C \|g_\sigma\|_{H^k_q(\T^d\times\R^d)}^2 \|f\|_{H^k_m(\T^d\times\R^d)},
\end{split}
\]
by Sobolev embedding, since $|\beta''|+d+1 \leq k-2 < k$. 

If $\gamma+2s>-d/2$,  we have \eqref{e:commutator-step} as above. We then write, omitting the dependence of $g_\sigma$ on $t$ and $x$, 
\[
\begin{split}
\int_{\R^d} \partial^{\beta''} g_\sigma &(v+w) \frac{\vv^q - \langle v+w\rangle^q}{|w|^{d+2s}} \dd w\\
 & = \vv^q \int_{\R^d} \partial^{\beta''} g_\sigma(v+w) \langle v+w\rangle^{q} \frac{ \langle v+w\rangle^{-q} - \vv^{-q}}{|w|^{d+2s}} \dd w \\
&= \vv^q \int_{\R^d} (\partial^{\beta''}g_\sigma(v+w) \langle v+w\rangle^q - \partial^{\beta''} g_\sigma(v) \vv^q)  \frac{\langle v+w\rangle^{-q} - \vv^{-q}}{|w|^{d+2s}} \dd w\\
&\quad + \vv^{2q} \partial^{\beta''} g_\sigma(v) \int_{\R^d} \frac{\langle v+w\rangle^{-q} - \vv^{-q}}{|w|^{d+2s}} \dd w\\
&\lesssim \vv^{-s} \left(\int_{\R^d} \frac{|\partial^{\beta''} g_\sigma(v+w)\langle v+w\rangle^q - \partial^{\beta''} g_\sigma(v)\vv^q|^2} {|w|^{d+2s}} \dd w \right)^{1/2}+ \vv^{q-2s} \partial^{\beta''} g_\sigma(v),
\end{split}
\]
using Lemma \ref{l:q-power} again. Using this in \eqref{e:commutator-step} and integrating against $\vv^q\partial^\beta g_\sigma$, we have, using Lemma \ref{l:convolution}(b),
\[
\begin{split}
J_3 &\lesssim \int_{\T^d} \|\vv^q\partial^\beta g_\sigma\|_{L^2_v(\R^d)} \|\partial^{\beta'} f \ast |\cdot|^{\gamma+2s}\|_{L^\infty_v(\R^d)} \left( \|\vv^q \partial^{\beta''} g_\sigma\|_{H^s_v(\R^d)} + \|\vv^{q-2s}\partial^{\beta''}g_\sigma\|_{L^2_v(\R^d)}\right)\\
&\lesssim \int_{\T^d}  \|\vv^q\partial^\beta g_\sigma\|_{L^2_v(\R^d)} \|\partial^{\beta'} f\|_{L^2_m(\R^d)}  \left( \|\vv^q \partial^{\beta''} g_\sigma\|_{H^s_v(\R^d)} + \|\vv^{q-2s}\partial^{\beta''}g_\sigma\|_{L^2_v(\R^d)}\right) \dd x\\
&\lesssim \|\vv^q \partial^\beta g_\sigma\|_{L^2(\T^d\times\R^d)} \|\vv^m \partial^{\beta'} f\|_{L^2(\T^d\times\R^d)} \|\vv^q g_\sigma\|_{L^\infty_x H^s_v(\T^d\times\R^d)}\\
&\lesssim \|g_\sigma\|_{H^k_q(\T^d\times\R^d)}^2 \|f\|_{H^k_m(\T^d\times\R^d)},
\end{split}
\]
by the Sobolev embedding $H^{\lceil (d+1)/2 \rceil}_x(\T^d) \subset L^\infty_x(\T^d)$, since $|\beta''| +  2s + \lceil (d+1)/2\rceil< k$.


Returning to \eqref{e:energy-est} and adding up all multi-indices $\beta$ with $|\beta|\leq k$, we have shown
\[
\frac 1 2 \frac d {dt}\|g_\sigma\|_{H^k_q(\T^d\times\R^d)}^2 \leq C\left( (\|f\|_{H^k_m(\T^d\times\R^d)}+1) \|g_\sigma\|_{H^k_q(\T^d\times\R^d)}^2 +  \|R\|_{L^2_q(\T^d\times\R^d)}^2\right),
\]
with $C$ as in the statement of the lemma. Gr\"onwall's inequality completes the proof.
\end{proof}

The following (somewhat crude) estimates on the collision operator will be helpful in our proof of local existence:

\begin{lemma}\label{l:simple}
The estimates
\[
\begin{split}
\|Q(F,G)\|_{L^2_q(\T^d\times\R^d)} &\leq C\|F\|_{H^{d+1}_m(\T^d\times\R^d)}\|G\|_{H^2_q(\T^d\times\R^d)},\\
\|Q(F,G)\|_{L^\infty(\T^d\times\R^d)} &\leq C\|F\|_{H^{d+1}_m(\T^d\times\R^d)} \|G\|_{H^{d+3}(\T^d\times\R^d)},
\end{split}
\]
hold, for any $q, m> \gamma + 2s + d$ and any $F, G:\T^d\times\R^d\to \R$ such that the right-hand sides are finite. The constant $C>0$ depends only on $d$, $\gamma$, $s$, $q$, and $m$. 
\end{lemma}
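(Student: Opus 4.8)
The plan is to bound both norms of $Q(F,G) = Q_1(F,G) + Q_2(F,G)$ separately, using the Carleman representation $Q_1(F,G) \approx [F\ast|\cdot|^{\gamma+2s}](-\Delta)^s G$ and $Q_2(F,G) \approx [F\ast|\cdot|^\gamma]\, G$, then combining the two. The key analytic inputs are the convolution estimates of Lemma \ref{l:convolution} (both parts (a) and (b), depending on whether $\gamma$, resp.\ $\gamma+2s$, is above or below $-d/2$), the power-function bounds of Lemma \ref{l:q-power}, the commutation of polynomial weights $\vv^q$ through $Q$ (using $\vv^q Q_2(F,G) = Q_2(F,\vv^q G)$ and the Leibniz-type structure, together with $(-\Delta)^s(\vv^q G) \lesssim$ lower-order terms via Lemma \ref{l:q-power}), and the Sobolev embeddings $H^{d+1}(\T^d\times\R^d)\subset L^\infty(\T^d\times\R^d)$ and, where convenient, $H^{\lceil(d+1)/2\rceil}_x(\T^d)\subset L^\infty_x(\T^d)$.

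For the $L^2_q$ estimate, first commute the weight: write $\vv^q Q(F,G) = Q_1(F,\vv^q G) + [\vv^q Q_1(F,G) - Q_1(F,\vv^q G)] + Q_2(F,\vv^q G)$, and control the commutator term by (a slight variant of the proof of) Lemma \ref{l:commutator}, which costs only $\|F\|_{L^\infty_m}\|G\|_{L^2_{q}}$ type quantities. For the main term $Q_1(F,\vv^q G) \approx [F\ast|\cdot|^{\gamma+2s}](-\Delta)^s(\vv^q G)$, estimate in $L^2_v$ for each fixed $x$ by putting $[F\ast|\cdot|^{\gamma+2s}]$ in $L^\infty_v$ via Lemma \ref{l:convolution}(a) (valid since $m>\gamma+2s+d$) and $(-\Delta)^s(\vv^q G)$ in $L^2_v$ controlled by $\|\vv^q G\|_{H^2_v}$ (interpolating $H^{2s}\subset$ between $H^2$ and $L^2$ since $2s\le 2$); then take $L^\infty_v$ of the convolution factor up to an $L^\infty_x$ bound on $F$ handled by the Sobolev embedding $H^{d+1}_x\to L^\infty_x$, and integrate in $x$. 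The $Q_2(F,\vv^q G) \approx [F\ast|\cdot|^\gamma]\vv^q G$ term is treated the same way with Lemma \ref{l:convolution}(a) (now needing $m>\gamma+d$, which follows) and $\|\vv^q G\|_{L^2}\le \|G\|_{H^2_q}$. Collecting gives the bound by $C\|F\|_{H^{d+1}_m}\|G\|_{H^2_q}$.

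For the $L^\infty$ estimate, use $H^{d+1}(\T^d\times\R^d)\subset L^\infty(\T^d\times\R^d)$ on $Q(F,G)$ itself: it suffices to bound $\|Q(F,G)\|_{H^{d+1}(\T^d\times\R^d)}$, and by the Leibniz rule $\partial Q(F,G)=Q(\partial F,G)+Q(F,\partial G)$ this reduces to bounding $\|Q(\partial^{\beta'}F,\partial^{\beta''}G)\|_{L^2}$ for all multi-indices with $|\beta'|+|\beta''|\le d+1$. For each such term apply the $L^2$-bound just proven (with $q=0$): $\|Q(\partial^{\beta'}F,\partial^{\beta''}G)\|_{L^2}\lesssim \|\partial^{\beta'}F\|_{H^{d+1}_m}\|\partial^{\beta''}G\|_{H^2}\le \|F\|_{H^{2d+2}_m}\|G\|_{H^{d+3}}$ --- but this overcounts derivatives on $F$, so instead one splits cases: when $|\beta'|$ is small put $[\partial^{\beta'}F\ast|\cdot|^{\gamma+2s}]$ in $L^\infty$ via Lemma \ref{l:convolution}(a) needing only $\|\partial^{\beta'}F\|_{L^\infty_m}\lesssim\|F\|_{H^{d+1}_m}$ and put the remaining $d+1$ derivatives plus $2s$ more onto $G$, giving $\|G\|_{H^{d+3}}$ (since $d+1+2s\le d+3$); when $|\beta''|$ is small, put $\partial^{\beta''}G$ in $L^\infty$ by Sobolev embedding $H^{d+1}\subset L^\infty$ (here $|\beta''|+d+1\le d+1+ (d+1)$, but we can arrange $|\beta''|\le 2$ so $|\beta''|+d+1\le d+3$) and estimate the remaining $Q$-factor acting on $\partial^{\beta'}F$ in $L^2$ using Lemma \ref{l:convolution}(a) with the $L^\infty_m$ norm of the appropriately-many-times differentiated $F$, again bounded by $\|F\|_{H^{d+1}_m}$ --- wait, if $|\beta'|$ can be as large as $d+1$ then we need $\|\partial^{\beta'}F\|_{L^\infty_m}\lesssim\|F\|_{H^{2d+2}_m}$, which exceeds $H^{d+1}_m$; to stay within $H^{d+1}_m$ one keeps $F$-derivatives bounded by noting that in the case $|\beta'|$ large we instead need no $L^\infty_v$ bound on $F$ but use Lemma \ref{l:convolution}(b) (or Lemma \ref{l:young2}) to get an $L^2_v$ bound on $[\partial^{\beta'}F\ast|\cdot|^{\gamma+2s}]$ costing only $\|\partial^{\beta'}F\|_{L^2_m}\le\|F\|_{H^{d+1}_m}$, paired against $\partial^{\beta''}G$ in $L^\infty$. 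Summing over all admissible $\beta',\beta''$ and applying Sobolev embedding once more yields $\|Q(F,G)\|_{L^\infty}\lesssim\|F\|_{H^{d+1}_m}\|G\|_{H^{d+3}}$.

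The main obstacle is the bookkeeping in the $L^\infty$ bound: one must split into cases according to which factor absorbs the derivatives and according to the sign of $\gamma+2s$ relative to $-d/2$ (to choose between Lemma \ref{l:convolution}(a)/(b) and Lemma \ref{l:young2} for the convolution), all while keeping the number of derivatives on $F$ at most $d+1$ and on $G$ at most $d+3$, and correctly commuting the $\vv^q$ weight past $(-\Delta)^s$ and the convolutions. These are the same techniques already used in the proofs of Lemmas \ref{l:commutator} and \ref{l:energy-estimate}, so no new idea is needed --- only careful case analysis and counting.
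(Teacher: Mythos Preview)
Your $L^2_q$ argument is correct and, in fact, more explicit than the paper's sketch: commuting $\vv^q$ through $Q_1$ via Lemma \ref{l:commutator} and then bounding $Q_1(F,\vv^q G)$ with Lemma \ref{l:convolution}(a) is exactly what underlies the paper's one-line appeal to ``Lemma \ref{l:convolution}(a), Sobolev embedding, and standard interpolation.'' No genuine difference there.

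For the $L^\infty$ estimate, however, you take a much longer route than necessary. You propose to bound $\|Q(F,G)\|_{H^{d+1}}$ via the Leibniz rule and then embed, which forces you into the case analysis on $|\beta'|$ versus $|\beta''|$ (and the awkward moment where you realize that $|\beta'|=d+1$ would require $\|\partial^{\beta'}F\|_{L^\infty_m}\lesssim\|F\|_{H^{2d+2}_m}$, so you switch to Lemma \ref{l:convolution}(b)/\ref{l:young2}). This works, but it is unnecessary. The paper simply estimates $Q(F,G)$ pointwise: from the Carleman form,
\[
|Q(F,G)(x,v)| \lesssim \|[F\ast|\cdot|^{\gamma+2s}]\|_{L^\infty}\,\|(-\Delta)^s G\|_{L^\infty} + \|[F\ast|\cdot|^\gamma]\|_{L^\infty}\,\|G\|_{L^\infty},
\]
then bounds both convolutions by $\|F\|_{L^\infty_m}\lesssim\|F\|_{H^{d+1}_m}$ (Lemma \ref{l:convolution}(a) plus Sobolev) and $\|(-\Delta)^s G\|_{L^\infty}\lesssim \|G\|_{W^{2,\infty}}\lesssim\|G\|_{H^{d+3}}$ (the integral formula plus Sobolev embedding $H^{d+1}\subset L^\infty$ applied to $D^2G$). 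No derivatives ever land on $F$, so no case split is needed. Your approach would eventually succeed, but the direct pointwise bound is both shorter and avoids the bookkeeping you flagged as the main obstacle.
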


\begin{proof}
Writing 
\[
Q(F,G) \approx [F\ast |\cdot|^{\gamma+2s}] (-\Delta)^s G + [F\ast |\cdot|^\gamma] G,
\]
the conclusion follows from Lemma \ref{l:convolution}(a), the Sobolev embedding $H^{d+1}(\T^d\times\R^d)\subset L^\infty(\T^d\times\R^d)$, and standard interpolation estimates for (unweighted) Sobolev norms.
\end{proof}

Next, we prove existence for the linear isotropic Boltzmann equation:

\begin{lemma}\label{l:linear-existence}
Let $T>0$, $m>\gamma+2s+d$, and $q> \max\{\gamma+2s+d,1\}$ be arbitrary. For any $f\in L^\infty([0,T], H^{2d+2}_m(\T^d\times\R^d))$ and $g_{\rm in}\in H^{2d+2}_q(\T^d\times\R^d)$ with $f, g_{\rm in} \geq 0$, there exists a solution 
\[
g\in L^\infty([0,T], H^{2d+2}_q(\T^d\times\R^d))\cap W^{1,\infty}([0,T],L^2_{q-1}(\T^d\times\R^d))
\] 
to the initial value problem
\begin{equation}\label{e:lin-iso}
\begin{cases}\partial_t g + v\cdot\nabla_x g = Q(f,g),\\
g(0,x,v) = g_{\rm in}(x,v).
\end{cases}
\end{equation}
Furthermore, $g\geq 0$. 
\end{lemma}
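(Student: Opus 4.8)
The plan is to build $g$ by the method of continuity, and then to verify $g\geq 0$ by a separate, maximum-principle-type argument. Fix $f\geq 0$ in $L^\infty([0,T],H^{2d+2}_m(\T^d\times\R^d))$ and $g_{\rm in}\geq 0$ in $H^{2d+2}_q(\T^d\times\R^d)$, set $X:=L^\infty([0,T],H^{2d+2}_q(\T^d\times\R^d))$, and consider the homotopy
\[
\partial_t g + v\cdot\nabla_x g = \sigma Q(f,g) + (1-\sigma)\Delta_{x,v} g + R,\qquad \sigma\in[0,1],
\]
in which I keep the transport term at full strength throughout (so that at $\sigma=0$ one has the linear kinetic heat equation). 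The proof of Lemma~\ref{l:energy-estimate} bounds the transport term by a quantity that does not see its coefficient, so its conclusion — a bound on $\|g_\sigma\|_X$ in terms of $\|g_{\rm in}\|_{H^{2d+2}_q}$, $\|R\|_{L^2([0,T],L^2_q)}$, and $\|f\|_{L^\infty([0,T],H^{2d+2}_m)}$, \emph{uniformly in $\sigma$} — applies verbatim to this family. Let $\Sigma\subseteq[0,1]$ be the set of $\sigma$ such that, for every $g_{\rm in}\in H^{2d+2}_q$ and every $R\in L^2([0,T],L^2_q(\T^d\times\R^d))$, the corresponding initial value problem has a solution $g\in X$; once such a $g$ is known, $\partial_t g\in L^\infty([0,T],L^2_{q-1})$ follows automatically from the equation, Lemma~\ref{l:simple}, and $|v|\lesssim\vv$. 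We will show $0\in\Sigma$, that $\Sigma$ is open in $[0,1]$, and that $\Sigma$ is closed; then $1\in\Sigma$, and taking $R=0$ gives the solution of \eqref{e:lin-iso}.

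\emph{Base case, openness, closedness.} At $\sigma=0$ the equation is $\partial_t g+v\cdot\nabla_x g=\Delta_{x,v}g+R$, solvable in $X$ by a standard Galerkin scheme in the weighted Sobolev scale, using the uniform a~priori bound of Lemma~\ref{l:energy-estimate} and the Aubin--Lions lemma to extract a convergent subsequence; so $0\in\Sigma$. For openness, let $\sigma_0\in\Sigma$ and rewrite the $\sigma$-equation as the $\sigma_0$-equation with modified source $\widetilde R[g]:=R+(\sigma-\sigma_0)\bigl(Q(f,g)-\Delta_{x,v}g\bigr)$; because the transport terms agree there is no weight loss, and by Lemma~\ref{l:simple} and the bound on $f$ we have $\widetilde R[g]\in L^2([0,T],L^2_q)$ for $g\in X$. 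Define $\Phi(g)$ to be the solution of the $\sigma_0$-problem with datum $g_{\rm in}$ and source $\widetilde R[g]$ (which exists since $\sigma_0\in\Sigma$). Since $g\mapsto\widetilde R[g]$ is affine and $Q(f,\cdot),\Delta_{x,v}$ are bounded from $X$ into $L^2([0,T],L^2_q)$, the a~priori estimate of Lemma~\ref{l:energy-estimate} (applied to $\Phi(g_1)-\Phi(g_2)$, which solves the $\sigma_0$-equation with zero datum) shows that $\Phi$ maps a suitable ball of $X$ into itself and is a contraction there once $|\sigma-\sigma_0|$ is small; its fixed point solves the $\sigma$-problem, so $\Sigma$ is open. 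Finally, if $\sigma_n\to\sigma_*$ with $\sigma_n\in\Sigma$, the solutions $g_{\sigma_n}$ are bounded in $X$ and $\partial_t g_{\sigma_n}$ bounded in $L^\infty([0,T],L^2_{q-1})$, uniformly in $n$; extracting a weak-$*$ limit and a strong limit in a lower-order norm (Aubin--Lions), and using that $Q(f,\cdot)$ is linear and bounded, we pass to the limit in each term to get a solution at $\sigma_*$ (with the usual upgrade from weak to strong continuity in $t$). Hence $\Sigma=[0,1]$.

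\emph{Nonnegativity.} Given the solution $g$ of \eqref{e:lin-iso}, note $g(t)\in C^2(\T^d\times\R^d)$ with decay as $|v|\to\infty$ by Sobolev embedding in $H^{2d+2}_q$, so we may test against the negative part $g_-:=\max(-g,0)\geq 0$. The transport term drops: $\int_{\T^d\times\R^d}g_-\,v\cdot\nabla_x g\,\dd v\,\dd x=\tfrac12\int v\cdot\nabla_x(g_-^2)\,\dd v\,\dd x=0$. Since $K_f\geq 0$, Kato's inequality for the jump operator $Q_1(f,\cdot)$ (equivalently the symmetrization used in Section~\ref{s:conservation}) gives the pointwise bound $-g_-\,Q_1(f,g)\leq Q_1(f,\tfrac12 g_-^2)$; integrating in $v$ and using the cancellation computation of Section~\ref{s:carleman} (which identifies $(-\Delta)^s|\cdot|^{\gamma+2s}$ with a positive multiple of $|\cdot|^\gamma$) yields $-\int g_-\,Q_1(f,g)\,\dd v\lesssim\|f\|_{L^\infty_m}\|g_-\|_{L^2}^2$ by Lemma~\ref{l:convolution}(a), which applies since $m>\gamma+d$. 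For $Q_2(f,g)\approx g\,[f\ast|\cdot|^\gamma]$, using $g_-g_+=0$ gives $-\int g_-\,Q_2(f,g)\,\dd v\lesssim\|f\|_{L^\infty_m}\|g_-\|_{L^2}^2$ as well. Hence $\tfrac{\dd}{\dd t}\|g_-(t)\|_{L^2(\T^d\times\R^d)}^2\lesssim\|f(t)\|_{L^\infty_m(\T^d\times\R^d)}\|g_-(t)\|_{L^2(\T^d\times\R^d)}^2$, and since $g_{\rm in}\geq 0$ forces $\|g_-(0)\|_{L^2}=0$, Grönwall's inequality gives $g_-\equiv 0$, i.e. $g\geq 0$. (The same $L^2_q$ energy estimate applied to the difference of two solutions also shows uniqueness.)

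\emph{Main obstacle.} Most of the analytic work is already done in Lemmas~\ref{l:energy-estimate} and~\ref{l:simple}; the delicate point is the bookkeeping in the openness step — the homotopy must be arranged so that the perturbation term stays in $L^2_q$, which is why the transport term is kept at strength $1$ rather than $\sigma$ (for $g\in H^{2d+2}_q$ one only has $v\cdot\nabla_x g\in L^2_{q-1}$) — and one must check that the $\sigma_0$-solver genuinely returns $H^{2d+2}_q$-valued solutions from $L^2_q$ sources. The nonnegativity is a separate ingredient, invisible to the $L^2$-energy estimates and depending instead on the sign of $K_f$ and the cancellation structure of $Q$.
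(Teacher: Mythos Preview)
Your argument is correct and follows essentially the same strategy as the paper: a method-of-continuity homotopy between the linear isotropic Boltzmann equation and a heat-type equation, driven by the uniform a~priori estimate of Lemma~\ref{l:energy-estimate}, followed by a maximum-principle argument for nonnegativity. The paper's version differs only in packaging---it places the coefficient $\sigma$ on the transport term as well (absorbing the resulting moment loss into the target space $Y_T=L^2_tL^2_{q-1}\times H^{2d+2}_q$) and invokes the abstract method of continuity \cite[Theorem~5.2]{gilbargtrudinger} directly rather than your open/closed argument, and it sketches nonnegativity via a pointwise minimum rather than your Kato-type $L^2$ estimate on $g_-$.
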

\begin{proof}
For $T>0$, define the Banach spaces
\[
\begin{split}
X_T &:= L^\infty([0,T], H^{2d+2}_q(\T^d\times\R^d)) \cap W^{1,\infty}([0,T], L^2_{q-1}(\T^d\times\R^d)).\\
Y_T &: = L^2([0,T], L^2_{q-1}(\T^d\times\R^d)) \times H^{2d+2}_q(\T^d\times\R^d).
\end{split}
\]
For fixed $\sigma\in [0,1]$ and $f\in L^\infty([0,T],H^{2d+2}_m(\T^d\times\R^d))$, define the linear operator 
\[
\mathcal L_\sigma: X_T \to Y_T,
\] 
by 
\[
\mathcal L_\sigma(g) = \left(\partial_t g + \sigma v\cdot \nabla_x g - \sigma Q(f,g) - (1-\sigma)\Delta_{x,v} g, \,g\Big|_{t=0}\right).
\]
To verify that $\mathcal L_\sigma(g)$ is indeed an element of $Y_T$, we note the following: for $g\in X_T$, we have from Lemma \ref{l:simple} and the fact that $H^{2d+2}(\T^d\times\R^d)$ embeds in $C^2(\T^d\times\R^d)$, the inequality
\begin{equation}\label{e:embed}
\begin{split}
\|\sigma v\cdot \nabla_x g - &\sigma Q(f,g) - (1-\sigma)\Delta_{x,v} g\|_{L^2_{q-1}(\T^d\times\R^d)}\\
& \leq C\left(\|f\|_{H^{2d+2}_m(\T^d\times\R^d)} + 1\right)\|g\|_{H^{2d+2}_q(\T^d\times\R^d)}, \quad 0\leq t\leq T.
\end{split}
\end{equation}
The loss of one moment comes from the term $\sigma v\cdot \nabla_x g$. We also have $\partial_t g\in L^2([0,T], L^2_{q-1}(\T^d\times \R^d))$ as a result of the $W^{1,\infty}$ norm included in the definition of $X_T$. Finally, the function $g$ is continuous as $t\to 0$ from the time regularity included in the definition of $X_T$, and since $g(t)\in H^{2d+2}_q(\T^d\times\R^d)$ uniformly in $t$, we indeed have $g\Big|_{t=0} \in H^{2d+2}_q(\T^d\times\R^d)$. 


From Lemma \ref{l:energy-estimate}, we have for any $g\in X_T$,
\begin{equation}\label{e:method1}
\|g\|_{L^\infty([0,T], H^{2d+2}_q(\T^d\times\R^d))} \leq \|\mathcal L_\sigma(g)\|_{Y_T} \exp\left(C\int_0^T(1+\|f(t')\|_{H^{2d+2}_m(\T^d\times\R^d)}\dd t'\right).
\end{equation}
Next, we use the definition of $\mathcal L_\sigma$ and \eqref{e:embed} to write
\[
\begin{split}
\|\partial_t g\|_{L^\infty([0,T],L^2_{q-1}(\T^d\times\R^d))} &\leq \|\partial_t g + \sigma v\cdot \nabla_x g - \sigma Q(f,g) - (1-\sigma)\Delta_{x,v} g\|_{L^\infty([0,T],L^2_{q-1}(\T^d\times\R^d))}\\
&\quad  + \|v\cdot\nabla_x g + Q(f,g) + \Delta_{x,v} g\|_{L^\infty([0,T],L^2_{q-1}(\T^d\times\R^d))}\\
&\leq \|\mathcal L_\sigma(g)\|_{Y_T} + C(\|f\|_{H^{d+1}_m(\T^d\times\R^d)}\|g\|_{L^\infty([0,T],H^{2d+2}_q(\T^d\times\R^d))}\\
&\leq C \|\mathcal L_\sigma(g)\|_{Y_T},
\end{split}
\]
using \eqref{e:method1} in the last line, with $C>0$ depending on $f$. Combining the last two estimates, we have
\[
\|g \|_{X_T} \leq C\|\mathcal L_\sigma(g)\|_{Y_T.}
\]

Note that $\mathcal L_0$ is a surjective map from $X_T$ to $Y_T$, as this fact corresponds to the solvability of the heat equation in $\R^{1+6}$ with a source term $R\in L^2([0,T], L^2_{q-1}(\T^d\times\R^d))$. 
Therefore, the method of continuity \cite[Theorem 5.2]{gilbargtrudinger} implies $\mathcal L_1$ is surjective from $X_T$ to $Y_T$, which means there is some $g\in X_T$ with $\mathcal L_1(g) = (0, g_{\rm in})$. This function $g$ solves \eqref{e:lin-iso}, as desired.

The nonnegativity of $g$ now follows from classical maximum principle arguments, using the monotonicity of the operator $Q(f,g)$, i.e. $Q(f,g) \geq 0$ at a nonnegative global minimum of $g$ in $v$. We omit the details. 
\end{proof}

Now we are ready to prove local existence for the nonlinear equation:

\begin{proof}[Proof of Theorem \ref{t:short-time-existence-inhom}]

Let $C$ be the constant from Lemma \ref{l:energy-estimate}, and define
\[
T = \frac{ \log 2}{2C \|f_{\rm in}\|_{H^{2d+2}_q(\T^d\times\R^d)}}.
\]

Define $f_0 (t,x,v)= f_{\rm in}(x,v)$, and for each $n$, let $f_{n+1}$ solve the initial value problem
\begin{equation}\label{e:n-iteration}
\begin{cases}
\partial_t f_{n+1} + v\cdot \nabla_x f_{n+1} = Q(f_n, f_{n+1}),\\
f(0,\cdot) = f_{\rm in},
\end{cases}
\end{equation}
on the time interval $[0,T]$. The solution $f_{n+1}$ exists and is nonnegative by Lemma \ref{l:linear-existence}. By induction, assume 
\begin{equation}\label{e:induction3}
\|f_n\|_{L^\infty([0,T],H^{2d+2}_q(\T^d\times\R^d))} \leq 2\|f_{\rm in}\|_{H^{2d+2}_q(\T^d\times\R^d)}.
\end{equation}
From Lemma \ref{l:energy-estimate} with $\sigma = 1$,  $R=0$, and $m=q$, we have
\[
\|f_{n+1}(t)\|_{H^{2d+2}_q(\T^d\times\R^d)} \leq \|f_{\rm in}\|_{H^{2d+2}_q(\T^d\times\R^d)} \exp\left(C \int_0^t (\|f_n(t')\|_{H^{2d+2}_q(\T^d\times\R^d)}+1) \dd t'\right), \quad 0\leq t\leq T.
\]
By our choice of $T$, and our inductive hypothesis on $f_n$, this implies
\[
\|f_{n+1}(t)\|_{H^{2d+2}_q(\T^d\times\R^d)} \leq \|f_{\rm in}\|_{H^{2d+2}_q(\T^d\times\R^d)}\exp(2C \|f_{\rm in}\|_{H^{2d+2}_q(\T^d\times\R^d)} T) \leq 2\|f_{\rm in}\|_{H^{2d+2}_q(\T^d\times\R^d)},
\]
and we have shown that \eqref{e:induction3} holds for all $n$. 

To prove regularity in $t$, we differentiate the equation $\partial_t f_{n+1} + v\cdot\nabla_x f_{n+1} = Q(f_n, f_{n+1})$ by some partial derivative $\partial^\beta$ in $(x,v)$ variables with $|\beta|\leq d+1$. Using Lemma \ref{l:simple} and the triangle inequality, it is straightforward to show 
\[
\|\partial^\beta \partial_t f_{n+1}(t)\|_{L^2_{q-1}(\T^d\times\R^d)} \leq C\left( \|f_{n+1}\|_{H^{|\beta|+1}_{q}(\T^d\times\R^d)} + \|f_n\|_{H^{|\beta|+d+1}_q(\T^d\times\R^d)} \|f_{n+1}\|_{H^{2+|\beta|}_{q-1}(\T^d\times\R^d)}\right),
\] 
uniformly in $t$. Since $|\beta|\leq d+1$, we conclude $f_n$ is bounded in $W^{1,\infty}([0,T],H^{d+1}_{q-1}(\T^d\times\R^d))$, uniformly in $n$. 

Therefore, a subsequence of $f_n$ converges weak-$\ast$ in $L^\infty([0,T],H^{2d+2}_q(\T^d\times\R^d))$, strongly in $L^\infty([0,T], H^{d+1}_{q-1}(\T^d\times\R^d))$, and pointwise a.e. to a limit $f \in L^\infty([0,T],H^{2d+2}_q(\T^d\times\R^d))$. To take the limit in the equation, we use the distributional form: for any smooth $\varphi$ with compact support in $(0,T)\times\T^d\times\R^d$, we have
\begin{equation}\label{e:fn-weak}
\begin{split}
-\int_{[0,T]\times\T^d\times\R^d} f_{n+1} & (\partial_t + v\cdot \nabla_x) \varphi  \dd v \dd x \dd t\\
 &= \int_{[0,T]\times\T^d\times\R^d}\left( \varphi  Q(f_n,f_{n}) + \varphi Q(f_n, f_{n+1} - f_n)\right) \dd v \dd x \dd t\\
&= \int_{[0,T]\times\T^d\times\R^d} \int_{\R^d\times\R^d} B(v,v_*,w) f_n (f_n)_*  [\varphi_*' + \varphi' - \varphi_* - \varphi] \dd w \dd v_* \dd v \dd x \dd t\\
&\quad +  \int_{[0,T]\times\T^d\times\R^d}\varphi Q(f_n, f_{n+1} - f_n) \dd v \dd x \dd t,
\end{split}
\end{equation}
by \eqref{e:weak2}. For the first term in this right-hand side, since $\varphi$ is smooth, a second-order Taylor expansion shows that $B(v,v_*,w) [\varphi_*' + \varphi' - \varphi_* - \varphi]\approx |v-v_*+w|^{\gamma+2s} |w|^{-d-2s} |w|^2$ is integrable near $w=0$. The convergence of this term then follows from the pointwise convergence of $f_n\to f$. 
The second term on the right in \eqref{e:fn-weak} converges to zero because of Lemma \ref{l:simple}, the uniform bound \eqref{e:induction3}, and the strong convergence of $f_n$ in $L^\infty([0,T],H^{d+1}_{q-1}(\T^d\times\R^d))$.
The left-hand side of \eqref{e:fn-weak} converges as a result of the weak-$\ast$ convergence of $f_n$ in $L^\infty([0,T],H^{2d+2}_q(\T^d\times\R^d))$.  In the limit, we obtain (after using \eqref{e:weak2} again)
\begin{equation}\label{e:f-weak}
-\int_{[0,T]\times\T^d\times\R^d} f (\partial_t + v\cdot \nabla_x) \varphi \dd v \dd x \dd t = \int_{[0,T]\times\T^d\times\R^d} \varphi Q(f,f) \dd v \dd x \dd t.
\end{equation}
We claim that $f$ is continuous in all three variables on $[0,T]\times\T^d\times\R^d$. Indeed, the uniform bound for $f_n$ in $W^{1,\infty}([0,T],H^{d+1}_{q-1}(\T^d\times\R^d))$, combined with Sobolev embedding, imply that $f_n$ are uniformly H\"older continuous in $(t,x,v)$, so the subsequential limit $f$ is also continuous. Sending $t\to 0$, this implies $f(0,x,v) = f_{\rm in}$. Also, since $f$ is continuous in $t$ and satisfies \eqref{e:f-weak} for any smooth $\varphi$, a standard argument implies $f$ is differentiable in $t$, and therefore we can integrate by parts on the left side of \eqref{e:f-weak}.


We have shown that $f$ solves the nonlinear equation \eqref{e:main} pointwise. This solution $f$ is naturally entitled to the energy estimate of Lemma \ref{l:energy-estimate} with $\sigma = 1$, $f = g_\sigma$, and $R=0$, and the same argument that was applied above to $\partial_t f_{n+1}$ implies $f \in W^{1,\infty}([0,T],H^{d+1}_{q-1}(\T^d\times\R^d))$. 

Next, we prove uniqueness. If $f$ and $g$ are two solutions in $L^\infty([0,T],H^{2d+2}_q(\T^d\times\R^d))$ with the same initial data $f_{\rm in}$, then $h = f-g$ solves 
\[
\partial_t h + v\cdot \nabla_x h = Q( h, f) + Q(g, h),
\]
with initial data $h_{\rm in} \equiv 0$. By Lemma \ref{l:energy-estimate} with $R= Q(h,f)$, we have for any $t\in [0,T]$,
\[
\begin{split}
\|h(t)\|_{H^{2d+2}_q(\T^d\times\R^d)} &\leq C\int_0^t\|Q(h,f)(t')\|_{L^2_q(\T^d\times\R^d)} \dd t' \exp\left(C T\|g\|_{L^\infty([0,T], H^{2d+2}_q(\T^d\times\R^d))}\right)\\
&\leq C\int_0^t \|h\|_{H^{d+1}_q(\T^d\times\R^d)} \|f\|_{H^2_q(\T^d\times\R^d)}\dd t' \exp\left(C T\|g\|_{L^\infty([0,T], H^{2d+2}_q(\T^d\times\R^d))}\right),
\end{split}
\]
by Lemma \ref{l:simple}. Using our assumption that $f$ and $g$ are bounded in $L^\infty([0,T],H^{2d+2}_q(\T^d\times\R^d))$, together with interpolation and Gr\"onwall's inequality, we conclude $h\equiv 0$ on $[0,T]\times\T^d\times\R^d$. 


Now, let $q_0$ denote the value of $q$ used above, such that $f_{\rm in} \in H^{2d+2}_{q_0}(\T^d\times\R^d)$. The foregoing proof provides a time of existence $T>0$ depending on the $H^{2d+2}_{q_0}$ norm of the initial data.  For initial data with more regularity and decay, we would like to propagate all weighted Sobolev norms that are finite at $t=0$ to this same time interval $[0,T]$. Suppose that the $H^k_{q'}(\T^d\times\R^d)$ norm of $f_{\rm in}$ is finite, for some $k\geq 2d+2$ and $q'\geq q_0$. We prove by induction that
\begin{equation}\label{e:induction-local}
\| f(t)\|_{H^j_{q'}(\T^d\times\R^d)} \leq C_{j,q',T}, \quad t\in [0,T],
\end{equation}
for all $2d+2 \leq j \leq k$, for a family of constants $C_{j,q',T}$. The base case $j=2d+2$ follows immediately from the energy estimates of Lemma \ref{l:energy-estimate} with $\sigma = 1$, $g_\sigma = f$, $R=0$, $m=q_0$, and $q=q'$. Assuming \eqref{e:induction-local} holds for some $j$, we differentiate the equation \eqref{e:main} by a single partial derivative in $v$, giving
\[
\partial_t (\partial_{v_i} f) + v\cdot \nabla_x (\partial_{v_i} f) = Q(f,\partial_{v_i} f) + R,
\]
with 
\[
R = Q(\partial_{v_i} f, f) - \partial_{x_i} f = c_1 [\partial_{v_i} f \ast |\cdot|^{\gamma+2s}] (-\Delta)^s f + c_2[\partial_{v_i} f\ast |\cdot|^\gamma] f - \partial_{x_i} f.
\]
for constants $c_1, c_2$ defined above, depending only on $d$, $\gamma$, and $s$. For $\theta > d/2 + \gamma+2s$, we write
\[
\begin{split}
\|R(t)\|_{L^2_{q'}(\T^d\times\R^d)} &\lesssim \|(-\Delta)^s f\|_{L^\infty_{q'+\theta}(\T^d\times\R^d)} \|\partial_{v_i}f\ast|\cdot|^{\gamma+2s}\|_{L^2_{-\theta}(\T^d\times\R^d)}\\
&\quad + \|\partial_{v_i} f \ast |\cdot|^\gamma\|_{L^\infty(\T^d\times\R^d)} \|f\|_{L^2_{q'}(\T^d\times\R^d)} + \|\partial_{x_i} f\|_{L^2_{q'}(\T^d\times\R^d)}.
\end{split}
\]
To bound this expression, we apply Sobolev embedding to $(-\Delta)^s f$, Lemma \ref{l:young} to $\partial_{v_i}f \ast|\cdot|^{\gamma+2s}$ (since $q'\geq q_0>d/2+\gamma+2s$), and Lemma \ref{l:convolution}(a) to $\partial_{v_i} f \ast |\cdot|^\gamma$. Overall, we obtain
\[
\begin{split}
\|R(t)\|_{L^2_{q'}(\T^d\times\R^d)} &\lesssim \|(-\Delta)^s f\|_{H^{d+1}_{q'+\theta}(\T^d\times\R^d)} \|\partial_{v_i} f\|_{L^2_{q'}(\T^d\times\R^d)}\\
&\quad + \|\partial_{v_i} f\|_{L^\infty_{q'}(\T^d\times\R^d)} \|f\|_{L^2_{q'}(\T^d\times\R^d)} + \|\partial_{x_i} f\|_{L^2_{q'}(\T^d\times\R^d)}\\
&\lesssim \|f\|_{H^{2d+2}_{q'}(\T^d\times\R^d)}^2 + \|f\|_{H^{2d+2}_{q'}(\T^d\times\R^d)},
\end{split}
\]
after another Sobolev embedding applied to $\partial_{v_i} f$. We have used 
the fact that $d+1+2s < 2d+2$. 
We conclude $R$ is bounded in $L^\infty([0,T],L^2_{q'}(\T^d\times\R^d))$, so in particular it is in $L^2([0,T],L^2_{q'}(\T^d\times\R^d))$. 

We apply Lemma \ref{l:energy-estimate} to $\partial_{v_i} f$, with $j$ playing the role of $k$, which yields
\[
\begin{split}
\|\partial_{v_i} f(t)\|_{H^j_{q'}(\T^d\times\R^d)} &\leq \left(\|\partial_{v_j}f_{\rm in}\|_{H^j_{q'}(\T^d\times \R^d)} + C\int_0^t \|R(t')\|_{L^2_{q'}(\T^d\times\R^d)}^2\dd t'\right)\\
&\qquad \times \exp\left( C\int_0^t (1+ \|f(t')\|_{H^j_m(\T^d\times\R^{d})}) \dd t' \right).
\end{split}
\]
The inductive hypothesis \eqref{e:induction-local} and the assumption $\partial_{v_i} f_{\rm in} \in H^j_{q'}(\T^d\times\R^d)$ implies $\|\partial_{v_j} f(t)\|_{H^j_{q'}(\T^d\times\R^d)}$ is uniformly bounded on $[0,T]$. The same argument (but slightly simpler since the remainder $R$ has one fewer term) applies to first-order partial derivatives of $f$ in $x$. Therefore, the inequality \eqref{e:induction-local} holds for $j+1$, and the proof is complete.
\end{proof}

Finally, we address the spatially homogeneous case, Theorem \ref{t:short-time-existence}. On the one hand, existence for the homogeneous equation is a special case of the inhomogeneous problem. However, by repeating the argument above and using Sobolev embeddings in $\R^d$ instead of $\T^d\times\R^d$, we can improve the regularity requirement on $f_{\rm in}$ to $H^{d+3}_q(\R^d)$. The case analysis in the proof of Lemma \ref{l:energy-estimate} breaks down in a similar way, with the Sobolev exponent $\lceil (d+1)/2\rceil$ replacing $d + 1$. Other than this, the only changes to the proof are (i) fewer applications of H\"older's inequality needed because there is no $x$ dependence, (ii) the second term on the left in \eqref{e:energy-est} does not appear, and (iii) there is no moment loss in the proof of Lemma \ref{l:linear-existence}. We omit the details because the proof is strictly simpler than in the inhomogeneous case.

\appendix

\section{Technical Lemmas}\label{s:a}

First, we collect two standard upper bounds for convolutions with power functions $|\cdot|^\mu$. We omit the proof, which is elementary.

\begin{lemma}\label{l:convolution}
Let $f:\R^d\to \R$ and $-d < \mu < 0$. 
\begin{enumerate}
\item[(a)] If $f\in L^\infty_q(\R^d)$ for some $q> d+\mu$, then 
\[
\|f\ast |\cdot|^\mu\|_{L^\infty(\R^d)} \leq C\|f\|_{L^\infty_q(\R^d)},
\]
for a constant $C>0$ depending only on $d$, $\mu$, and $q$. 
\item[(b)] If $f\in L^2_q(\R^d)$ for some $q> d/2 + \mu$, then
\[
\|f\ast |\cdot|^\mu\|_{L^\infty(\R^d)} \leq C\|f\|_{L^2_q(\R^d)},
\]
for a constant $C>0$ depending only on $d$, $\mu$, and $q$. 
\end{enumerate}
\end{lemma}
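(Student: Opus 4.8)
\textbf{Proof plan for Lemma \ref{l:convolution}.}

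The plan is to prove both bounds by the same elementary dyadic decomposition of the convolution integral, splitting the region of integration according to whether $|v-u|$ is small, comparable to $|v|$, or large, and using the polynomial weight on $f$ together with local integrability of $|\cdot|^\mu$. Fix $v\in\R^d$ and write
\[
(f\ast|\cdot|^\mu)(v) = \int_{\R^d} f(u)\,|v-u|^\mu\,\mathrm du = \int_{B_1(v)} + \int_{\R^d\setminus B_1(v)} =: I_{\mathrm{near}} + I_{\mathrm{far}}.
\]
For $I_{\mathrm{far}}$, since $|v-u|\ge 1$ and $\mu<0$ we have $|v-u|^\mu\le 1$, so this part is controlled by $\|f\|_{L^1(\R^d)}$, which is in turn dominated by $\|f\|_{L^\infty_q(\R^d)}$ (for part (a), using $q>d$, which follows from $q>d+\mu$ and $\mu$ having no fixed sign constraint — more carefully one uses $q > d+\mu$ and the fact that near infinity $\langle u\rangle^{-q}$ is integrable because $q>d+\mu>0$ is not quite enough; instead split $I_{\mathrm{far}}$ once more, see below) or by $\|f\|_{L^2_q(\R^d)}$ (for part (b), since $q>d/2+\mu$ and a further splitting at $|v-u|\approx|v|$ handles the growth of the weight). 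To be safe I would, in both parts, first handle $I_{\mathrm{far}}$ by subdividing into $\{1\le|v-u|\le|v|/2\}\cup\{|v-u|>|v|/2\}$: on the first piece $|u|\approx|v|$ so $f(u)\le\|f\|_{L^\infty_q}\langle v\rangle^{-q}$ and the $u$-volume is $\lesssim|v|^d$, giving $\lesssim\langle v\rangle^{d-q}\lesssim 1$ since $q>d+\mu>d$... except $\mu<0$ so $d+\mu<d$; here one must instead simply note $|v-u|^\mu$ is bounded and integrate $\langle u\rangle^{-q}$ over all of $\R^d$, which converges iff $q>d$. This is the one genuinely delicate bookkeeping point, and I would resolve it by not separating $I_{\mathrm{far}}$ at all in part (a): bound $|v-u|^\mu\le 1$ and write $I_{\mathrm{far}}\le\|f\|_{L^1}$ only when $q>d$; when $d+\mu<q\le d$ one instead keeps the decay of $|v-u|^\mu$, pairing $|v-u|^\mu\lesssim\langle u\rangle^\mu$ on the piece $|u|\ge|v|/2$ and using $\int\langle u\rangle^{\mu-q}\,\mathrm du<\infty\iff q>d+\mu$. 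So the honest statement is: split at $|u|=|v|/2$ everywhere.

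For $I_{\mathrm{near}}$, where $|v-u|<1$, we have in part (a) simply
\[
I_{\mathrm{near}} \le \sup_{u\in B_1(v)}\big(f(u)\langle u\rangle^q\big)\cdot\sup_{u\in B_1(v)}\langle u\rangle^{-q}\cdot\int_{B_1(0)}|w|^\mu\,\mathrm dw \lesssim \|f\|_{L^\infty_q(\R^d)},
\]
using $\mu>-d$ so that $|w|^\mu\in L^1(B_1)$, and $\langle u\rangle^{-q}\le 1$. In part (b) the same region is handled by Cauchy--Schwarz: $I_{\mathrm{near}}\le\big(\int_{B_1(v)}f(u)^2\langle u\rangle^{2q}\,\mathrm du\big)^{1/2}\big(\int_{B_1(v)}\langle u\rangle^{-2q}|v-u|^{2\mu}\,\mathrm du\big)^{1/2}$; the second factor is $\lesssim\langle v\rangle^{-q}\big(\int_{B_1(0)}|w|^{2\mu}\,\mathrm dw\big)^{1/2}$, and this last integral is finite precisely because $2\mu>-d$, i.e. $\mu>-d/2$... which is \emph{not} assumed. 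So in part (b) one cannot use Cauchy--Schwarz on the singular factor directly; instead one should Cauchy--Schwarz against a Lorentz-type split or, more simply, use that $|\cdot|^\mu\in L^2_{\mathrm{loc}}$ fails but $|\cdot|^\mu\in L^{p}_{\mathrm{loc}}$ for $p<d/(-\mu)$, together with Hölder with exponents matching $f\in L^2$ via Hardy--Littlewood--Sobolev. Concretely: $\|f\ast|\cdot|^\mu\|_{L^\infty}\lesssim\|f\|_{L^2_q}$ follows from writing $|\cdot|^\mu = |\cdot|^\mu\mathbf 1_{B_1} + |\cdot|^\mu\mathbf 1_{B_1^c}$, estimating the far part by $\|f\|_{L^1}\lesssim\|f\|_{L^2_q}$ (valid since $q>d/2+\mu$ and, after the $|u|\ge|v|/2$ split, $q>d/2$), and the near part by Young's inequality $\|f\ast(|\cdot|^\mu\mathbf 1_{B_1})\|_{L^\infty}\le\|f\|_{L^2}\||\cdot|^\mu\mathbf 1_{B_1}\|_{L^2}$ — which again needs $\mu>-d/2$. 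Since the lemma as quoted allows $\mu\le-d/2$, the correct tool is Hölder: $\|f\ast(|\cdot|^\mu\mathbf 1_{B_1})\|_{L^\infty(\R^d)}\le\sup_v\|f\langle\cdot\rangle^q\|_{L^2}\|\langle\cdot\rangle^{-q}|\cdot-v|^\mu\mathbf 1_{B_1(v)}\|_{L^2}$, and on $B_1(v)$ with $|v|$ bounded this is the $\mu>-d/2$ case, while for $|v|$ large one gains $\langle v\rangle^{-q}$ decay with room to spare; alternatively invoke the embedding into the Lorentz space $L^{d/(-\mu),\infty}$ and $L^2\cdot L^{d/(-\mu),\infty}\hookrightarrow L^\infty$ when $q$ compensates. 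I would present the cleanest version: do the $|u|\ge|v|/2$ / $|u|<|v|/2$ split first so the problem reduces to $|v|\lesssim 1$, where a single application of Hölder's inequality with the pair $(2, 2d/(d+2\mu)')$...

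\textbf{Summary of the route and main obstacle.} The overall structure is: (1) split the convolution at $|u|=|v|/2$ to peel off the region where the weight $\langle u\rangle^{-q}$ beats the polynomial growth, reducing to a bounded-$v$ estimate plus a convergent tail; (2) on the bounded region, use local integrability of $|\cdot|^\mu$ — directly in $L^1_{\mathrm{loc}}$ for part (a), and via Hölder/Young (exploiting $\mu>-d$) in part (b); (3) on the tail, bound $|\cdot|^\mu$ by a constant or by $\langle u\rangle^\mu$ and integrate the weight, which converges exactly under the stated hypotheses $q>d+\mu$ resp. $q>d/2+\mu$. The only real subtlety — and the step I expect to need the most care — is part (b) when $\mu\le-d/2$, where $|\cdot|^\mu\notin L^2_{\mathrm{loc}}$ and one must avoid a naive Cauchy--Schwarz, instead using Hölder with an exponent strictly below $d/(-\mu)$ on the singular factor and absorbing the loss into the decay $\langle v\rangle^{-q}$ afforded by $q>d/2+\mu$. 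Everything else is routine and, as the paper says, elementary.
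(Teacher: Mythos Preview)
Your decomposition---splitting at $|v-u|\sim 1$ or $|u|\sim|v|/2$, using local integrability of $|\cdot|^\mu$ near the singularity and the weighted decay of $f$ in the tail---is exactly the elementary argument the paper has in mind (the paper omits the proof entirely). For part (a), and for part (b) when $\mu>-d/2$, your route is correct once the bookkeeping is straightened out: in (b) one simply uses Cauchy--Schwarz on the near piece, which needs $|\cdot|^{2\mu}\in L^1_{\mathrm{loc}}$, i.e.\ $\mu>-d/2$.

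The genuine gap is your attempt to salvage part (b) for $\mu\le -d/2$. None of the workarounds you sketch (H\"older with a different exponent, Lorentz embeddings, ``absorbing the loss into $\langle v\rangle^{-q}$'') can succeed, because the statement is \emph{false} in that range. Take $\mu<-d/2$ and set $f(v)=|v|^{-\alpha}\mathbf 1_{\{|v|<1\}}$ with $\alpha\in(d+\mu,\,d/2)$; this interval is nonempty precisely when $\mu<-d/2$. Then $f\in L^2(\R^d)$ since $2\alpha<d$, hence $f\in L^2_q(\R^d)$ for \emph{every} $q$ (it is compactly supported), yet
\[
(f\ast|\cdot|^\mu)(0)=\int_{B_1}|u|^{\mu-\alpha}\,\mathrm du=+\infty
\]
because $\alpha>d+\mu$. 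So no amount of weight $q$ rescues the $L^\infty$ bound. The lemma as stated is missing the hypothesis $\mu>-d/2$ in part (b); this is a slip in the paper's statement, not in its arguments, since every invocation of (b) in the body of the paper is preceded by an explicit check that the relevant exponent exceeds $-d/2$ (and when it does not, the paper switches to Lemma~\ref{l:young2} instead). You should simply add that hypothesis and give the Cauchy--Schwarz proof.
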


%
%
%

The following is a weighted version of Young's inequality for convolutions. The proof is the same as  \cite[Lemma 4.2]{henderson2022existence}, which addressed the case $d=3$. 
\begin{lemma}\label{l:young}
If $-d < \mu < 0$, $m> d/2 + \mu$, and $\ell > d/2 + \mu + (d/2-m)_+$, then for any $g\in L^2_m(\R^d)$, there holds
\[
\left\| g\ast |\cdot|^\mu \right\|_{L^2_{-\ell}(\R^d)} \leq C \|g\|_{L^2_m(\R^d)},
\]
for a constant $C>0$ depending only on $d$, $\mu$, $m$, and $\ell$.
%
%
\end{lemma}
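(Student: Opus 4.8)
The plan is to reduce the weighted estimate to the unweighted Young inequality $\|g\ast |\cdot|^\mu\|_{L^2} \lesssim \|g\|_{L^1}$ by splitting the convolution kernel into a near piece and a far piece. Write $|w|^\mu = |w|^\mu \1_{|w|\le 1} + |w|^\mu \1_{|w|>1} =: k_{\rm near}(w) + k_{\rm far}(w)$, so that $g\ast |\cdot|^\mu = g\ast k_{\rm near} + g\ast k_{\rm far}$, and estimate the $L^2_{-\ell}$ norm of each term separately. Since $-\ell < 0$, we have $\langle v\rangle^{-\ell}\le 1$ for all $v$, and moreover $\langle v\rangle^{-\ell}$ is essentially nonincreasing along the convolution; the point of the negative weight $-\ell$ is to absorb the polynomial growth that appears when we trade the $L^1$ norm of $g$ for its $L^2_m$ norm.

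First I would handle the near piece. Here $k_{\rm near}\in L^1(\R^d)$ because $\mu > -d$, so by Young's convolution inequality $\|g\ast k_{\rm near}\|_{L^2(\R^d)} \le \|k_{\rm near}\|_{L^1(\R^d)} \|g\|_{L^2(\R^d)} \lesssim \|g\|_{L^2_m(\R^d)}$, using $m\ge 0$ (which follows from $m > d/2+\mu$ and $\mu>-d/2$... more carefully, since $\mu > -d$ and $m>d/2+\mu$ we may still have $m$ possibly negative, but in the regime of interest $m>0$; in any case $\|g\|_{L^2}\le\|g\|_{L^2_m}$ whenever $m\ge 0$, and the borderline small-$m$ case is covered by the condition on $\ell$). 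Bounding $\langle v\rangle^{-\ell}\le 1$ then gives $\|g\ast k_{\rm near}\|_{L^2_{-\ell}(\R^d)} \lesssim \|g\|_{L^2_m(\R^d)}$. Next, the far piece: on $|w|>1$ one has $|w|^\mu \le 1$ since $\mu<0$, so $k_{\rm far} \in L^\infty$ and in fact $k_{\rm far}(w) \le \langle w\rangle^{\mu}$ up to a constant on $|w|>1$. Thus $|g\ast k_{\rm far}(v)| \lesssim \int_{\R^d} |g(v-w)|\langle w\rangle^\mu \dd w$, and I would split the $w$-integral into $|w| \le |v|/2$ and $|w|>|v|/2$: in the first region $|v-w|\approx |v|$ so $\langle v\rangle^{-\ell}|g(v-w)| \lesssim \langle v-w\rangle^{-\ell}|g(v-w)|$ times harmless factors, and in the second region $\langle v\rangle^{-\ell}\langle w\rangle^{\mu}$ decays fast enough in $w$. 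Applying Cauchy–Schwarz in $w$ against $\langle w\rangle^{2\mu}$ (integrable at infinity since $2\mu < -d$... actually $2\mu$ need not be $<-d$, so one uses $\langle w\rangle^{\mu}\langle w\rangle^{\mu-2m}\cdot\langle w\rangle^{2m}$ type splitting together with the decay of $\langle v\rangle^{-\ell}$) and then taking $L^2$ in $v$, the condition $\ell > d/2+\mu+(d/2-m)_+$ is exactly what makes the resulting $v$-integral converge.

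The main obstacle is the bookkeeping in the far piece when $m$ is small (i.e. $m$ close to $d/2+\mu$), where one cannot simply dominate $\|g\|_{L^1}$ by $\|g\|_{L^2_m}$ locally and must genuinely use the interplay between the decay rate $\ell$, the decay $m$ of $g$, and the integrability exponent $\mu$; this is precisely where the somewhat technical-looking hypothesis $\ell > d/2+\mu+(d/2-m)_+$ enters, and getting the exponents to line up is the only nontrivial computation. Since the argument is identical (apart from tracking the dimension $d$ in place of $3$) to the proof of \cite[Lemma 4.2]{henderson2022existence}, I would simply cite that reference for the details once the splitting above is set up.
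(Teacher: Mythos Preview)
The paper does not give its own proof of this lemma; it simply states that the proof is the same as \cite[Lemma 4.2]{henderson2022existence} (which treats $d=3$) and omits the details. Your proposal ultimately does the same thing---after sketching a near/far splitting you defer to that reference---so your approach is consistent with the paper's treatment.

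That said, a few of the details in your sketch are loose and would not survive as written: you implicitly assume $\ell>0$ (needed for $\langle v\rangle^{-\ell}\le 1$) and $m\ge 0$ (needed for $\|g\|_{L^2}\le\|g\|_{L^2_m}$), but the hypotheses only give $\ell>d/2+\mu+(d/2-m)_+$ and $m>d/2+\mu$, which do not force either sign; and your far-piece estimate trails off before the exponents are actually matched. None of this matters once you cite \cite{henderson2022existence}, but if you wanted the sketch to stand on its own you would need to handle those cases rather than wave at them.
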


Next, we have an estimate similar to the previous lemma, that is somewhat sharper in terms of weights:

\begin{lemma}\label{l:young2}
For $-d < \mu \leq -d/2$, $\theta > \mu+d$, and $g\in L^2_\theta(\R^d)$, one has
\[
\|g\ast |\cdot|^\mu\|_{L^2(\R^d)} \leq C \|g\|_{L^2_\theta(\R^d)}.
\]
\end{lemma}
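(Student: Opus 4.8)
The plan is to split the convolution kernel at unit scale, writing $|z|^\mu = |z|^\mu \1_{\{|z|\le 1\}} + |z|^\mu \1_{\{|z|> 1\}} =: k_0(z) + k_\infty(z)$, and to estimate $g\ast k_0$ and $g\ast k_\infty$ by two different applications of Young's convolution inequality. The piece $g\ast k_0$ is harmless: since $\mu>-d$ the kernel $k_0$ lies in $L^1(\R^d)$, so Young's inequality gives $\|g\ast k_0\|_{L^2(\R^d)}\le \|k_0\|_{L^1(\R^d)}\,\|g\|_{L^2(\R^d)}\le C\|g\|_{L^2_\theta(\R^d)}$, the last inequality using only that $\theta>0$, which holds because $\theta>\mu+d>0$.

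All the substance is in $g\ast k_\infty$, and the point is that $k_\infty$ gains integrability at infinity precisely in the regime $\mu\le-d/2$: one has $k_\infty\in L^q(\R^d)$ for every $q>d/|\mu|$, and since $\mu\le-d/2$ forces $d/|\mu|\le 2$, such exponents $q$ are available (with $q=2$ itself admissible when $\mu<-d/2$). On the other side, the polynomial weight improves the integrability of $g$: applying H\"older's inequality to $g=\langle v\rangle^{-\theta}\cdot(\langle v\rangle^{\theta}g)$ and using that $\langle\cdot\rangle^{-\theta}\in L^s(\R^d)$ for $s>d/\theta$, one gets $g\in L^p(\R^d)$ with $\|g\|_{L^p(\R^d)}\le C\|g\|_{L^2_\theta(\R^d)}$ for every $p\in[1,2]$ with $\tfrac1p<\tfrac12+\tfrac\theta d$. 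The plan is then to choose exponents $p,q$ in these ranges with $\tfrac1p+\tfrac1q=\tfrac32$ and invoke Young's inequality once more, $\|g\ast k_\infty\|_{L^2(\R^d)}\le\|g\|_{L^p(\R^d)}\,\|k_\infty\|_{L^q(\R^d)}\le C\|g\|_{L^2_\theta(\R^d)}$, and add the two pieces.

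The step I expect to require the most care — and the one that uses both hypotheses $\mu\le-d/2$ and $\theta>\mu+d$ — is verifying that such a pair $(p,q)$ exists. Writing $\tfrac1q=\tfrac32-\tfrac1p$, one needs $\tfrac1p$ to lie in the interval cut out by $q>d/|\mu|$ (equivalently $\tfrac1p>\tfrac32+\tfrac\mu d$), by $\tfrac1p<\tfrac12+\tfrac\theta d$, and by $\tfrac12\le\tfrac1p\le1$; this interval is nonempty because its lower endpoint satisfies $\tfrac32+\tfrac\mu d\le 1$ (this is exactly $\mu\le-d/2$) and $\tfrac32+\tfrac\mu d<\tfrac12+\tfrac\theta d$ (this is exactly $\theta>\mu+d$). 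The genuinely borderline situation is $\mu=-d/2$, where the admissible window for $\tfrac1p$ collapses onto the value $1$ and one is pushed to the forbidden endpoint $p=1$ of Young's inequality; this case is the most delicate and I would handle it separately, either by a limiting argument or by a direct dyadic-in-$|z|$ estimate $|g\ast k_\infty(v)|\lesssim\sum_{j\ge0}2^{j\mu}\!\int_{\{2^j\le|v-u|<2^{j+1}\}}\!|g(u)|\,\mathrm du$ followed by Cauchy--Schwarz in $u$ against $\langle u\rangle^{\pm\theta}$ and Fubini, which reduces the bound to summing a geometric series whose decay is controlled by $\theta-\mu-d>0$.
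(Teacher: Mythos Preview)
Your argument for the strict case $\mu<-d/2$ is correct, and it takes a genuinely different route from the paper. The paper does not split the kernel at unit scale; instead it splits the convolution $\int g(w)\,|v-w|^{\mu}\,\mathrm{d}w$ according to whether $|w|\ge |v|/2$ or $|w|<|v|/2$. On the far piece $|w|\ge|v|/2$ one has $\langle w\rangle\gtrsim\langle v\rangle$, so a factor $\langle v\rangle^{-\theta}$ can be pulled out and the remainder $\langle v\rangle^{-\theta}\bigl[\langle\cdot\rangle^{\theta}g\ast|\cdot|^{\mu}\bigr](v)$ is handled by the auxiliary weighted Young inequality (Lemma~\ref{l:young}) with $m=0$ and $\ell=\theta$. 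On the near piece $|w|<|v|/2$ one has $|v-w|\gtrsim|v|$, and a direct Cauchy--Schwarz against $|w|^{\pm\theta}$ gives a pointwise bound $\lesssim\|g\|_{L^2_\theta}\langle v\rangle^{\mu-\theta+d/2}$, which is square-integrable exactly when $\theta>\mu+d$. Your approach is more self-contained in that it avoids the auxiliary lemma and uses only the classical Young inequality; the paper's decomposition, on the other hand, makes the role of the weight and of the hypothesis $\theta>\mu+d$ more explicit (it appears as the integrability condition on the near piece).

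On the endpoint $\mu=-d/2$: you are right that your Young-inequality scheme degenerates there, but neither the limiting argument nor the dyadic patch you sketch will rescue it. In fact the inequality is \emph{false} at $\mu=-d/2$: take any $g\in L^2_\theta(\R^d)$ with $\int g\neq 0$ (this is possible since $\theta>d/2$ forces $L^2_\theta\subset L^1$); then for large $|v|$ one has $(g\ast|\cdot|^{-d/2})(v)\gtrsim |v|^{-d/2}\int_{B_1}g$, and $|v|^{-d/2}\notin L^2(\R^d)$. Consistently, the paper's own proof also invokes the \emph{strict} inequality $\mu<-d/2$ (the application of Lemma~\ref{l:young} with $m=0$ needs $0>d/2+\mu$). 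So the endpoint should simply be dropped; your proof, like the paper's, establishes the lemma for $-d<\mu<-d/2$.
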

\begin{proof}
%
%
Dividing the convolution integral into regions with $|w|\geq |v|/2$ and $|w|< |v|/2$, we first have
\[
\begin{split}
\int_{\{|w|\geq |v|/2\}} g(w) |v-w|^\mu \dd w \leq \vv^{-\theta} \int_{\{|w|\geq |v|/2\}} \langle w\rangle^\theta g(w) |v-w|^\mu \dd w \leq \vv^{-\theta} [\langle \cdot\rangle^\theta g\ast |\cdot|^\mu](v),
\end{split}
\]
so that, by Lemma \ref{l:young} with $m=0$ and $\ell = \theta$,
\[
\int_{\R^d} \left( \int_{\{|w|\geq |v|/2\}} g(w) |v-w|^\mu \dd w\right)^2 \dd v \leq \int_{\R^d} \vv^{-2\theta} [\langle \cdot \rangle^\theta g\ast |\cdot|^\mu]^2 \dd v \leq \|\vv^\theta g\|_{L^2(\R^d)}^2,
\]
where we could apply Lemma \ref{l:young} because $\mu < -d/2$ and $\theta > \mu+d$. For the integral over $\{|w|< |v|/2\}$, we use $|v-w|\geq |v|/2$ to write
\[
\begin{split}
\int_{\{|w|< |v|/2\}} g(w) |v-w|^\mu \dd w &\leq  \int_{\{|w|< |v|/2\}}  g(w) |w|^\theta |w|^{-\theta} |v-w|^\mu \dd w \\
&\leq \|g |w|^\theta\|_{L^2(\R^d)} \left( \int_{\{|w|< |v|/2\}} |w|^{-2\theta} |v-w|^{2\mu} \dd w\right)^{1/2}\\
&\leq  \|g\|_{L^2_\theta(\R^d)} \vv^\mu \left(\int_{\{|w|< |v|/2\}} |w|^{-2\theta} \dd w \right)^{1/2}\\
&\leq  \|g\|_{L^2_\theta(\R^d)} \vv^{\mu -\theta+d/2},
\end{split}
\]
and therefore
\[
\int_{\R^d} \left( \int_{\{|w|< |v|/2\}} g(w) |v-w|^\mu \dd w\right)^2 \dd v \leq \|g\|_{L^2_\theta(\R^d)} \int_{\R^d} \vv^{2(\mu - \theta) + d} \dd v \lesssim \|g\|_{L^2_\theta(\R^d)},
\]
since $\theta > \mu + d$. Combining our estimates for $\{|w|\geq |v|/2\}$ and $\{|w|< |v|/2\}$ implies the conclusion of the lemma.
\end{proof}

Finally, we collect three estimates about the regularity of the functions $\vv^{-q}$ and $|v|^{-q}$. We omit the proofs, which are elementary.
\begin{lemma}\label{l:q-power}
For $q>0$ and $s\in (0,1)$,
\[
\begin{split}
(-\Delta)^s \vv^{-q} &\leq C \vv^{-q-2s},\\
\int_{\R^d} \frac{(\langle v+w\rangle^{-q} - \vv^{-q})^2}{|w|^{d+2s}} \dd w &\leq C \vv^{-2q-2s},\\
[|v|^{-q}]_{C^{s}(z)} &\leq C |z|^{-q-s}, \quad z\in \R^d,
\end{split}
\]
where the constant $C$ depends on $d$, $s$, and $q$. In the last inequality, we use the notation
\[
[g]_{C^{s}(z)} = \sup_{v\in \R^d} \frac{|g(v) -g(z)|}{|v-z|^{s}}.
\]
\end{lemma}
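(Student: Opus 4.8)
The plan is to prove all three estimates by an elementary dyadic decomposition of the relevant $w$-integral into a near regime $\{|w|\lesssim\vv\}$ and a far regime $\{|w|\gtrsim\vv\}$; for the third inequality, which is a Hölder-type seminorm, it suffices to work in the range $|w|=|v-z|\le|z|/2$, since that is the only regime in which the estimate is actually invoked (in the proof of Lemma~\ref{l:good-sign}). First I would dispose of the case $\vv\lesssim 1$, which is trivial: there $\vv^{-q}$ is a smooth function with bounded derivatives and all the radial integrals converge, so each left-hand side is $\lesssim 1\approx\vv^{-q-2s}$ (resp.\ $\vv^{-2q-2s}$). So the work is in $|v|\ge 1$, where $\vv\approx|v|$, and the only inputs needed are the elementary pointwise bounds $|\nabla\vv^{-q}|\lesssim\vv^{-q-1}$, $|D^2\vv^{-q}|\lesssim\vv^{-q-2}$, and $|\nabla|v|^{-q}|=q|v|^{-q-1}$ for $v\ne 0$.

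For the first inequality I would use the symmetric representation $(-\Delta)^s\vv^{-q}=\tfrac{c_{d,s}}2\int_{\R^d}\bigl(2\vv^{-q}-\langle v+w\rangle^{-q}-\langle v-w\rangle^{-q}\bigr)|w|^{-d-2s}\dd w$. On $\{|w|\le\vv/2\}$ the segment from $v$ to $v\pm w$ stays inside $\{\langle u\rangle\gtrsim\vv\}$, so a second-order Taylor expansion bounds the integrand by $\vv^{-q-2}|w|^{2-d-2s}$, and integrating gives $\lesssim\vv^{-q-2}\vv^{2-2s}=\vv^{-q-2s}$. On $\{|w|>\vv/2\}$ the key point is that $\langle\cdot\rangle^{-q}\ge 0$, so dropping the two nonpositive terms yields the upper bound $c_{d,s}\vv^{-q}\int_{|w|>\vv/2}|w|^{-d-2s}\dd w\lesssim\vv^{-q-2s}$; summing the two regimes proves the claim. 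For the second inequality I would run the same split: on $\{|w|\le\vv/2\}$ a first-order Taylor expansion gives $|\langle v+w\rangle^{-q}-\vv^{-q}|\lesssim\vv^{-q-1}|w|$, hence a contribution $\lesssim\vv^{-2q-2}\int_{|w|\le\vv/2}|w|^{2-d-2s}\dd w\lesssim\vv^{-2q-2s}$; on $\{|w|>\vv/2\}$ I would use $(\langle v+w\rangle^{-q}-\vv^{-q})^2\le\langle v+w\rangle^{-2q}+\vv^{-2q}$, the $\vv^{-2q}$ term contributing $\lesssim\vv^{-2q}\int_{|w|>\vv/2}|w|^{-d-2s}\dd w\lesssim\vv^{-2q-2s}$, and for the remaining piece the substitution $u=v+w$ reduces it to $\int_{|u-v|>\vv/2}\langle u\rangle^{-2q}|u-v|^{-d-2s}\dd u$, which I would control by separating $\{|u|\le\vv\}$ (where $|u-v|\approx\vv$, so one pulls $|u-v|^{-d-2s}$ out and is left with an elementary radial integral of $\langle u\rangle^{-2q}$) from $\{|u|>\vv\}$ (where $\langle u\rangle^{-2q}\lesssim\vv^{-2q}$ and the leftover $|u-v|^{-d-2s}$ integrates to $\vv^{-2s}$). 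Finally, for the third inequality, when $|w|\le|z|/2$ one has $|z+\theta w|\ge|z|/2$ along the segment, so $\bigl||z+w|^{-q}-|z|^{-q}\bigr|\le q|w|\sup_\theta|z+\theta w|^{-q-1}\lesssim|z|^{-q-1}|w|\le|z|^{-q-1}(|z|/2)^{1-s}|w|^{s}\lesssim|z|^{-q-s}|w|^{s}$, which is the asserted Hölder estimate.

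The routine parts of this are the Taylor expansions and the one-dimensional radial integrals. The one step I expect to be the main obstacle is the far-field term in the second estimate: one must extract the full decay $\vv^{-2q-2s}$ from $\int_{|w|\gtrsim\vv}\langle v+w\rangle^{-2q}|w|^{-d-2s}\dd w$ rather than the weaker bound one obtains by naively pulling $|w|^{-d-2s}$ outside the integral, so the dyadic decomposition of the $u$-integral described above, which uses the decay of $\langle u\rangle^{-2q}$ in an essential way, is where the argument really has to be carried out with care.
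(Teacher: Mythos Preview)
The paper omits the proof entirely (``we omit the proofs, which are elementary''), so there is nothing to compare against directly. Your argument for the first inequality is standard and correct.

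The genuine gap is in the second inequality, precisely at the step you flagged as the main obstacle. In the far-field piece $\int_{|u-v|>\vv/2}\langle u\rangle^{-2q}|u-v|^{-d-2s}\,\dd u$, your sub-region $\{|u|\le\vv\}$ yields $\vv^{-d-2s}\int_{|u|\le\vv}\langle u\rangle^{-2q}\,\dd u$; when $2q>d$ that radial integral is $O(1)$, and you obtain only $\vv^{-d-2s}$, not $\vv^{-2q-2s}$. This is not a repairable oversight: the region $\{|v+w|\le 1\}$ alone, where $\langle v+w\rangle^{-q}\gtrsim 1$ while $|w|\approx\vv$, already contributes $\gtrsim\vv^{-d-2s}$ to the original integral, so the inequality as stated is in fact false for $q>d/2$. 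Your decomposition does correctly prove the true bound $\lesssim\vv^{-\min(2q,d)-2s}$. A parallel remark applies to the third inequality: you are right that the supremum over all $v\in\R^d$ cannot hold (send $v\to 0$), and your restriction to $|v-z|\le|z|/2$ gives the correct local statement; but your claim that the paper only uses it in that regime is inaccurate, since in the proof of Lemma~\ref{l:good-sign} the increment $w$ lies in $B_1$ while the center $v-v_*$ ranges over all of $\R^d$. The version that is both true and sufficient there is the symmetric bound $\bigl||y|^{-q}-|z|^{-q}\bigr|\lesssim\bigl(|y|^{-q-s}+|z|^{-q-s}\bigr)|y-z|^{s}$, which follows at once from your mean-value argument after assuming $|y|\le|z|$ without loss of generality.
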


%
%
%

\bibliographystyle{plain}
\bibliography{isotropic}

\end{document}